\numberwithin{equation}{section}
\newtheorem{theorem}{Theorem}
\newtheorem{thm}{Theorem}[section]
\newtheorem{cor}[thm]{Corollary}
\newtheorem{lem}[thm]{Lemma}
\newtheorem{prop}[thm]{Proposition}
\theoremstyle{remark}
\newtheorem{rem}[thm]{Remark}
\theoremstyle{definition}
\newtheorem{defin}[thm]{Definition}
\renewcommand{\Re}{\mathrm{Re}}
\renewcommand{\Im}{\mathrm{Im}}
\newcommand{\e}{\mathbf{e}}
\newcommand{\R}{\mathbb{R}}
\newcommand{\Z}{\mathbb{Z}}
\newcommand{\C}{\mathbb{C}}
\newcommand{\cP}{\mathcal{P}}
\newcommand{\cQ}{\mathcal{Q}}
\newcommand{\mhi}{\mu_{\mathrm{hi}}}
\newcommand{\mlo}{\mu_{\mathrm{lo}}}
\newcommand{\nhi}{\nu_{\mathrm{hi}}}
\newcommand{\nlo}{\nu_{\mathrm{lo}}}
\newcommand{\la}{\langle}
\newcommand{\ra}{\rangle}
\newcommand{\Sp}{\mathbb{S}}
\newcommand{\ls}{\lesssim}
\newcommand{\due}{|D|^{-\frac{1}{2}}U^2_\e}
\newcommand{\dve}{|D|^{-\frac{1}{2}}V^2_\e}
\newcommand{\qo}{Q^0}
\newcommand{\snabla}{\slashed \nabla}
\DeclareMathOperator{\dist}{dist}
\renewcommand{\#}{\sharp}
\title{Local wellposedness of Chern-Simons-Schr\"odinger}
\author[B. Liu]{Baoping Liu}
\address{University of Chicago}
\email{baoping@math.uchicago.edu}
\author[P. Smith]{Paul Smith}
\address{University of California, Berkeley}
\email{smith@math.berkeley.edu}
\author[D. Tataru]{Daniel Tataru}
\address{University of California, Berkeley}
\email{tataru@math.berkeley.edu}
\thanks{The second author was supported by NSF grant DMS-1103877.
The third author was supported by NSF grant
DMS-0801261 and by the Simons Foundation. }
\begin{document}

\begin{abstract}
  In this article we consider the initial value problem for the
  Chern-Simons-Schr\"odinger model in two space dimensions. This is a
  covariant NLS type problem which is $L^2$ critical.  For this
  equation we introduce a so-called heat gauge, and prove that, with
  respect to this gauge, the problem is locally well-posed for initial
  data which is small in $H^s$, $s > 0$.
\end{abstract}

\maketitle

\tableofcontents

%%%%%%%%%%%%%%%%%%%%%%%%%%%%%%%%%%%%%%%%%%%%%%%%%%%%%%%
%%%%%%%%%%                                                     
%%%%%%%%%%                      Introduction                    
%%%%%%%%%%                                                    
%%%%%%%%%%%%%%%%%%%%%%%%%%%%%%%%%%%%%%%%%%%%%%%%%%%%%%%
\section{Introduction} \label{S:Introduction}
 The two dimensional Chern-Simons-Schr\"odinger system is a nonrelativistic quantum
model describing the dynamics of a large number of particles in the plane, which
interact both directly and via a self-generated electromagnetic field. The variables
we use to describe the dynamics are the scalar field $\phi$ describing the particle
system, and the electromagnetic potential $A$, which can be viewed as a one-form
on $\R^{2+1}$.  The associated covariant differentiation operators are
 defined in terms of the electromagnetic potential $A$ as
\begin{equation}
D_\alpha := \partial_\alpha+ i A_\alpha.
\label{D alpha}
\end{equation}
With this notation, the Lagrangian for this system
is
\begin{equation}
L(A,\phi) = \frac12 \int_{\R^{2+1}}  
\Im (\bar \phi D_t \phi) + |D_x \phi|^2 -\frac{g}2 |\phi|^4 dx dt +
\frac12 \int_{\R^{2+1}} A \wedge dA
\end{equation}
Although the electromagnetic potential $A$ appears explicitly in the Lagrangian,
it is easy to see that  locally  $L(A,\phi)$ only depends upon the electromagnetic field
$F = dA$. Precisely, the Lagrangian is invariant with respect to the transformations
\begin{equation}\label{gauge-freedom}
\phi \mapsto e^{-i \theta} \phi
\quad \quad
A \mapsto A + d \theta
\end{equation}
for compactly supported real-valued functions $\theta(t, x)$.

Computing the Euler-Lagrange equations for the above Lagrangian,
one obtains a covariant NLS equation for $\phi$, coupled with
equations giving the electromagnetic field in terms of $\phi$, as follows:
\begin{equation}
\begin{cases}
D_t \phi
&=
i D_\ell D_\ell \phi + i g \lvert \phi \rvert^2 \phi
\\
\partial_t A_1 - \partial_1 A_t
&= - J_2
\\
\partial_t A_2 - \partial_2 A_t
&=
J_1
\\
\partial_1 A_2 - \partial_2 A_1
&=
-\frac{1}{2} \lvert \phi \rvert^2
\end{cases}
\label{CSS}
\end{equation}
where we  use  $J_i$ to denote
\begin{equation*}
J_i := \Im(\bar{\phi} D_i \phi)
\end{equation*}
Regarding indices, we use $\alpha = 0$ for the time variable $t$
and $\alpha = 1, 2$ for the spatial variables $x_1, x_2$. When we
wish to exclude the time variable in a certain expression, we
switch from Greek indices to Roman. Repeated indices are assumed
to be summed. We discuss initial conditions in \S \ref{S:Gauge}.

The system (\ref{CSS}) is a basic model of Chern-Simons dynamics
\cite{JaPi91, EzHoIw91, EzHoIw91b, JaPi91b}. For further physical
motivation for studying (\ref{CSS}), see \cite{JaTe81, MaPaSo91,
Wi90}.

The system (\ref{CSS}) has the gauge invariance \eqref{gauge-freedom}.
It is also Galilean-invariant and has conserved
\emph{charge}
\begin{equation*}
M(\phi) := \int_{\R^2} \lvert \phi \rvert^2 dx
\end{equation*}
and \emph{energy}
\begin{equation*}
E(\phi) :=
\frac12 \int_{\R^2}
\lvert D_x \phi \rvert^2
- \frac{g}{2} \lvert \phi \rvert^4 dx.
\end{equation*}
As the scaling symmetry
\[
\phi(t,x)\rightarrow \lambda\phi(\lambda^2t, \lambda x ), \quad
\phi_0(x)\rightarrow \lambda\phi_0(\lambda x); \quad \lambda>0,
\]
preserves the charge of the initial data $M(\phi_0)$, $L^2_x$ is
the critical space for equation (\ref{CSS}).

Local wellposedness in $H^2$ is established in \cite{BeBoSa95}.  Also
given are conditions ensuring finite-time blowup. With a
regularization argument, \cite{BeBoSa95} demonstrates global existence
(but not uniqueness) in $H^1$ for small $L^2$ data.

Our goal in this paper is to establish local wellposedness for
(\ref{CSS}) in spaces over the full subcritical range $H^s$ with $s > 0$.
However, in order to state the result we need to first remove
the gauge freedom by choosing a suitable gauge. This is done in the next section,
which ends with our main result.

%%%%%%%%%%%%%%%%%%%%%%%%%%%%%%%%%%%%%%%%%%%%%%%%%%%%%%%
%%%%%%%%%%                                                      
%%%%%%%%%%                      Gauge selection                
%%%%%%%%%%                                                     
%%%%%%%%%%%%%%%%%%%%%%%%%%%%%%%%%%%%%%%%%%%%%%%%%%%%%%%
\section{Gauge selection} \label{S:Gauge}
In order to interpret the  Chern-Simons-Schr\"odinger system \eqref{CSS}
as a well-defined time evolution, we need to impose a suitable gauge
condition which eliminates the  gauge freedom described in \eqref{gauge-freedom}.

One can relate the gauge fixing problem here to the similar difficulty
occurring in the study of wave and Schr\"odinger maps.
Both wave maps and Schr\"odinger maps are geometric evolution equations,
and in such settings the function $\phi$ takes values not in $\C$,
but rather more generally in some (suitable) manifold $M$.
A gauged system arises when
considering evolution equations at the level of the (pullback of the) tangent bundle
$\phi^* TM$, where $\phi^*$ denotes the pullback.

A classical gauge choice is the \emph{Coulomb gauge}, which is derived by imposing
the constraint $\nabla \cdot A_x = 0$. In low dimension, however, the Coulomb gauge
has unfavorable $\mathrm{high} \times \mathrm{high} \to \mathrm{low}$ interactions.
To overcome this difficulty in the $d = 2$ setting of wave maps into hyperbolic space,
Tao \cite{Tao04} introduced the caloric gauge as an alternative. See \cite{Tao08}
for an application of the caloric gauge to large data wave maps in $d=2$ and 
\cite{BeIoKeTa11} for an application to small data Schr\"odinger maps in $d = 2$.
We refer the reader to \cite[Chapter 6]{Tao06} for a lengthier discussion and a comparison
of various gauges.

Unfortunately, the direct analogue of the caloric gauge
for the Chern-Simons-Schr\"odinger system does not result in any
improvement over the Coulomb gauge.
Instead, in this
article we adopt from \cite{De07} a different variation of the Coulomb
gauge called the \emph{parabolic gauge}. We shall also refer to it as
the \emph{heat gauge}.  The defining condition of the heat gauge is
\begin{equation}
\nabla \cdot A_x = A_t
\label{heat gauge condition}
\end{equation}
Differentiating in the $x_1$ and $x_2$ directions the second
and third equations (respectively) in (\ref{CSS}) yields
\begin{equation*}
\begin{cases}
\partial_t \partial_1 A_1 - \partial_1^2 A_t
&=
- \partial_1 \Im(\bar{\phi} D_2 \phi) \\
\partial_t \partial_2 A_2 - \partial_2^2 A_t
&=
\partial_2 \Im(\bar{\phi} D_1 \phi)
\end{cases}
\end{equation*}
Adding these, we get
\begin{equation*}
\partial_t (\nabla \cdot A_x) - \Delta A_t = - \partial_1 J_2 + \partial_2 J_1,
\end{equation*}
which, in view of the heat gauge condition (\ref{heat gauge condition}),
implies that $A_t$ evolves according to the nonlinear heat equation
\begin{equation}
(\partial_t - \Delta) A_t = - \partial_1 J_2 + \partial_2 J_1
\label{At evo}
\end{equation}
Similarly, we obtain (coupled) parabolic evolution equations for $A_1$ and $A_2$:
\begin{equation}
\begin{cases}
(\partial_t - \Delta) A_1
&= -J_2 - \frac{1}{2} \partial_2 \lvert \phi \rvert^2
\\
(\partial_t - \Delta) A_2
&= J_1 + \frac{1}{2} \partial_1 \lvert \phi \rvert^2
\end{cases}
\label{Ax evo}
\end{equation}
We still retain the freedom to impose initial conditions for the
parabolic equations for $A$ in (\ref{At evo}) and (\ref{Ax evo}), in
any way that is consistent with the last equation in \eqref{CSS}.
We impose
\[
A_t(0) = \nabla \cdot
A_x(0) = 0
\]
 To see that such a choice is consistent with
(\ref{CSS}), observe that $\nabla \cdot A_x(0) = 0$ coupled with
the fourth equation of (\ref{CSS}) yields the system
\begin{equation}
\begin{cases}
\partial_1 A_1(t=0) + \partial_2 A_2(t=0) = 0 \\
\partial_1 A_2(t=0) - \partial_2 A_1(t=0) = - \frac{1}{2} \lvert \phi_0 \rvert^2,
\end{cases}
\label{Ax initial system}
\end{equation}
which in turn implies
\begin{equation}
\begin{cases}
\Delta A_1(t = 0) &= \frac{1}{2} \partial_2 \lvert \phi_0 \rvert^2 \\
\Delta A_2(t = 0) &= -\frac{1}{2} \partial_1 \lvert \phi_0 \rvert^2
\label{Ax initial data}
\end{cases}
\end{equation}
Substituting (\ref{Ax initial data}) into (\ref{Ax evo}) yields
\begin{equation*}
\begin{cases}
\partial_t A_1(t = 0) &= -\Im(\bar{\phi} D_2 \phi) \\
\partial_t A_2(t = 0) &= \Im(\bar{\phi} D_1 \phi),
\end{cases}
\end{equation*}
which is exactly what we obtain directly from the second and third
equations of (\ref{CSS}) at $t = 0$ with the choice $A_t(t = 0) \equiv 0$.

So having imposed an additional equation in order to fix a gauge,
we study the initial value problem for the system
\begin{equation}
\begin{cases}
D_t \phi
&=
i D_\ell D_\ell \phi + i g \lvert \phi \rvert^2 \phi
\\
\partial_t A_1 - \partial_1 A_t
&=
- \Im(\bar{\phi} D_2 \phi)
\\
\partial_t A_2 - \partial_2 A_t
&=
\Im(\bar{\phi} D_1 \phi)
\\
\partial_1 A_2 - \partial_2 A_1
&=
-\frac{1}{2} \lvert \phi \rvert^2
\\
A_t
&=
\nabla \cdot A_x
\end{cases}
\label{CSSwithGauge}
\end{equation}
with initial data
\begin{equation}
\begin{cases}
\phi(0, x) &= \phi_0(x) \\
A_t(0, x) &= 0 \\
A_1(0, x) &= \frac{1}{2}\Delta^{-1}\partial_2 \lvert \phi_0 \rvert^2(x) \\
A_2(0, x) &= - \frac{1}{2} \Delta^{-1} \partial_1 \lvert \phi_0 \rvert^2(x)
\end{cases}\label{CSSdata}
\end{equation}
Our main result is the following.
\begin{thm}
  For any small initial data $\phi_0\in H^s{(\R^2)}$, $s>0$, the
  equation (\ref{CSSwithGauge}) with initial data (\ref{CSSdata}) has
  solution $\phi(t,x)\in C([0,1],H^s(\R^2))$, which is the unique
  uniform limit of smooth solutions. In addition, $\phi_0 \mapsto
  \phi$ is Lipschitz continuous from $H^s(\R^2)$ to
  $C([0,1],H^s(\R^2))$.
\end{thm}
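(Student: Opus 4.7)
The plan is to recast \eqref{CSSwithGauge} as a coupled Schr\"odinger/parabolic system and solve it by a contraction mapping in suitable function spaces. Expanding the covariant derivatives in the heat gauge, the evolution equation for $\phi$ takes the schematic form
\[
(i\partial_t + \Delta)\phi \;=\; -2i A_\ell \partial_\ell \phi \;+\; F(A,\phi),
\]
where $F$ collects the lower-order terms $A_t \phi$, $|A|^2 \phi$ and $g|\phi|^2\phi$. The gauge components $A_t,A_1,A_2$ are then recovered from $\phi$ by solving the forward heat equations \eqref{At evo}--\eqref{Ax evo} with initial data \eqref{CSSdata}. So I would iterate the map $\phi \mapsto A \mapsto \phi$, seeking a fixed point in a small ball.

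For $\phi$ I would employ the $U^2_\Delta$/$V^2_\Delta$ atomic spaces at the $|D|^{-1/2}$ scale suggested by the macros $\due$, $\dve$, which are well adapted to $L^2$-critical Schr\"odinger problems and transfer Strichartz and bilinear Schr\"odinger estimates. For the gauge field I would use parabolic Sobolev spaces whose norms sit at one derivative above the source: applying Duhamel to the heat equations with quadratic right-hand sides $J_i, |\phi|^2$ gains one full spatial derivative at each dyadic frequency, so that $A_\ell$ at frequency $N$ behaves like $N^{-1}$ times a quadratic expression in $\phi$. This gain, which is the structural advantage of the heat gauge over the Coulomb gauge, is what controls the high-high-to-low paraproduct regime.

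With these spaces in place, the bulk of the work is multilinear: a bilinear estimate for $A_\ell \partial_\ell \phi$ in the dual Schr\"odinger norm, trilinear estimates for $|A|^2 \phi$ and $|\phi|^2 \phi$, and parabolic estimates controlling $A$ in terms of $\phi$. The smallness of $\phi_0$ in $H^s$ together with the subcritical gap $s>0$ then supplies the small constant needed to close the contraction on $[0,1]$. The principal obstacle is the magnetic interaction $A_\ell \partial_\ell \phi$: when $A$ is itself generated by a high-high interaction of $\phi$ and subsequently multiplies a low-frequency $\partial\phi$, the naive count loses a derivative, and neither parabolic smoothing alone nor Strichartz alone closes the estimate. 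The resolution will be to combine the parabolic smoothing of $A$ with the transversality structure of the Schr\"odinger bilinear form, implemented via $U^2/V^2$ atomic decompositions and frequency-localized bilinear Schr\"odinger estimates, extracting the missing derivative from the modulation of the two high-frequency $\phi$ factors. Once these multilinear bounds are established, uniqueness, the characterization of $\phi$ as a uniform limit of smooth solutions, and Lipschitz dependence on $\phi_0$ all follow by re-running the same estimates on differences of two solutions.
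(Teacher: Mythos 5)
Your outline captures the broad contraction-in-$U^2/V^2$ strategy and correctly identifies parabolic smoothing and bilinear Schr\"odinger estimates as central tools, but it misses the structural obstruction that drives most of the paper's machinery and would cause the iteration as you describe it to fail.

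The map $\phi \mapsto A \mapsto N(\phi,A)$ is \emph{not} perturbative as stated, because $A_1, A_2$ contain a contribution $H^{-1}A_x(0)$ depending only on $\phi_0$, and $A_x(0) = \pm\tfrac12 \Delta^{-1}\partial_x|\phi_0|^2$ has only $|x|^{-1}$ decay at infinity. Its low-frequency part is not in any parabolic Sobolev space of the kind you propose; it lies only in $L^4_{t,x}$ and a lateral $L^{2,6}_\e$. Feeding this into the magnetic term $-2iA_\ell\partial_\ell\phi$ produces a term $L\phi = iQ_{12}(C,\phi)$ with $C=H^{-1}\Delta^{-1}|\phi_0|^2$, which is \emph{linear in $\phi$} with a fixed, long-range coefficient. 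No smallness in $\phi$ can help here; one needs to absorb $L$ into the linear flow. The paper does this by (i) exploiting that $C$ is radially structured so that $Q_{12}(C,\phi)$ reduces to an angular derivative weighted by a potential of critical decay, and (ii) building dedicated function spaces $X^{\sigma,\sharp}_k$, $X^{\sigma}_k$ encoding weighted angular regularity with moving centers (Lemmas~\ref{Xsigma0-solve} and \ref{Xsigma0-shift}, Proposition~\ref{Cphi}). None of this appears in your proposal, and without it the fixed point does not close.

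Two further omissions matter. First, the paper does not estimate $A_1,A_2$ directly but introduces $B_1=H^{-1}(A_2|\phi|^2)$, $B_2=H^{-1}(A_1|\phi|^2)$ and contracts in \eqref{B1B2}; direct estimates on $A$ in the spaces suggested by parabolic regularity do not suffice. Second, the paper's function space is a sum/intersection $X_k^\sharp = l^2_k U^{2,\sharp}_k + X^{\sigma,\sharp}_k$ combining the angular structure with \emph{lateral} $U^2_\e$, $V^2_\e$ spaces (built from the sideways factorization of the Schr\"odinger symbol) and a spatial $l^2_k$ overlay that encodes the finite propagation on unit time scales; these are what let the null form $Q_{12}$ in $N_{3,1}$, $N_{3,2}$ be exploited and the balanced high-high cases (your ``principal obstacle'') be closed. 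Your appeal to ``transversality of the Schr\"odinger bilinear form'' gestures at this, but the resolution requires the specific lateral-space and null-structure machinery, and, more importantly, the balanced high-high magnetic interaction is not in fact the principal obstacle: the nonperturbative linear term $L\phi$ is.
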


We remark that ideally one would like to have global well-posedness
for small data in $L^2$. Unfortunately, in our arguments we encounter
logarithmic divergencies at nearly every step with respect to the
$L^2$ setting, making it impossible to achieve this goal.

Another interesting remark is that while the initial system \eqref{CSS}
is time reversible, the parabolic evolutions added by our gauge choice remove the
time reversibility. One may possibly view this as a disadvantage
of our gauge choice.

Our result is proved via a fixed point argument in a topology $X^{s}$, defined later, which is
stronger than the $C([0,1],H^s(\R^2))$ topology. Thus we directly obtain uniqueness
in $X^s$, as well as Lipschitz dependence on the initial data with respect to the $X^s$
topology.

%%%%%%%%%%%%%%%%%%%%%%%%%%%%%%%%%%%%%%%%%%%%%%%%%%%%%%%
%%%%%%%%%%                                                   
%%%%%%%%%%              Reductions using the heat gauge             
%%%%%%%%%%                                              
%%%%%%%%%%%%%%%%%%%%%%%%%%%%%%%%%%%%%%%%%%%%%%%%%%%%%%%
\section{Reductions using the heat gauge} \label{reductionEq}
Let $\tilde{f}$ denote the space-time Fourier transform
\[
\tilde{f}(\tau, \xi) := \iint e^{-i (t \tau  + x \cdot \xi)} f(t, x) dt dx
\]

We define $H^{-1}$ as the Fourier multiplier
\begin{equation}
H^{-1}f:=
\frac{1}{(2\pi)^3}
\int\frac{1}{i\tau+|\xi|^2}e^{i(t\tau +
x\cdot\xi)}\tilde{f}(\tau,\xi)d\tau d\xi
\end{equation}

Applied to initial data, it takes the form
\[
H^{-1}(f(x)\delta_{t=0})
=
\mathbf{1}_{\{ t\geq 0 \}}e^{t\Delta}f(x)
\]

We define $H^{-\frac{1}{2}}$ similarly:
\begin{equation}
H^{-\frac{1}{2}}f:=
\frac{1}{(2\pi)^3}
\int\frac{1}{(i\tau+|\xi|^2)^{\frac{1}{2}}}e^{i(t\tau
+ x\cdot\xi)}\tilde{f}(\tau,\xi)d\tau d\xi
\end{equation}
Here we use the principal square root of the complex-valued function
$i\tau+|\xi|^2$ by taking the positive real axis as the branch
cut.  As the above symbol is still holomorphic for $\tau$ in the lower
half-space, it follows that its kernel is also supported in $t \geq 0$.
In what follows all these operators are applied only to functions supported on positive time
intervals.

Using (\ref{At evo}), we can rewrite $A_t $ as
\begin{equation}\label{At}
A_t = - H^{-1}((Q_{12}(\bar{\phi}, \phi))) - H^{-1}(\partial_1(
A_2 \lvert \phi \rvert^2)) + H^{-1}(\partial_2( A_1 \lvert \phi
\rvert^2)),
\end{equation}
where
$Q_{12}(\phi, \bar{\phi}) := \Im(\partial_1\phi\partial_2\bar{\phi}-\partial_2\phi\partial_1\bar{\phi})$.

Similarly, by (\ref{Ax evo}) and initial condition (\ref{Ax
initial data}), we can rewrite $A_x$ as follows:
\begin{equation}\label{A1A2Initial}
\begin{split}
A_1 &= H^{-1}A_1( 0)-H^{-1} [\Re(\bar{\phi} \partial_2 \phi)+\Im(\bar{\phi} \partial_2\phi)]  - H^{-1}(A_2 \lvert \phi \rvert^2) \\
A_2 &=H^{-1}A_2( 0) +H^{-1} [\Re(\bar{\phi}
\partial_1 \phi) + \Im(\bar{\phi}
\partial_1 \phi) ] + H^{-1}(A_1 \lvert \phi \rvert^2)
\end{split}
\end{equation}
Here
\[
A_1( 0)=\frac{1}{2}\Delta^{-1}\partial_2\lvert \phi_0 \rvert^2
\quad \text{and} \quad
A_2( 0)=-\frac{1}{2}\Delta^{-1}\partial_1\lvert \phi_0 \rvert^2
\]
Our strategy will be to use the contraction principle in the equations \eqref{A1A2Initial}
in order to bound $A_1$ and $A_2$  in terms of $\phi$, and then to use
\eqref{At} to estimate $A_t$.  The contraction principle is not applied directly to $A_1$ and
$A_2$, but instead to
\[
B_1 =   H^{-1}(A_2 \lvert \phi \rvert^2), \qquad B_2 =  H^{-1}(A_1 \lvert \phi \rvert^2),
\]
These functions solve the system
\begin{equation}\label{B1B2}
\begin{split}
B_1 &=  H^{-1}( (H^{-1}A_2( 0) +H^{-1} [\Re(\bar{\phi}
\partial_1 \phi) + \Im(\bar{\phi}
\partial_1 \phi) ])|\phi|^2) + H^{-1} (B_2 |\phi|^2)
\\
B_2 & =  H^{-1}( (H^{-1}A_1( 0)-H^{-1} [\Re(\bar{\phi} \partial_2 \phi)+\Im(\bar{\phi} \partial_2\phi)] )|\phi|^2) -  H^{-1} (B_1 |\phi|^2)
\end{split}
\end{equation}

We observe here that the first components of $A_1$ and $A_2$ depend
only on the initial data; therefore they effectively act almost as
stationary electromagnetic potentials for the linear Schr\"odinger
equation. The difficulty is that even if $\phi_0$ is localized, both
of these components have only $|x|^{-1}$ decay at infinity, which in
general would make them nonperturbative long range potentials.
Fortunately $A_1(0)$ and $A_2(0)$ are not independent, and taking
into account their interrelation will allow us to still treat their effects
in the Schr\"odinger equation as perturbative. However, this ends up  causing
 considerable aggravation in the construction of our function spaces.

Now we turn our attention to the first equation in \eqref{CSS}, which
we expand using (\ref{D alpha}) as
\begin{equation}\label{phi evolution}
(i \partial_t + \Delta) \phi = N(\phi,A):=- 2 i A_\ell \partial_\ell \phi - i
\partial_\ell A_\ell \phi + A_t \phi + A_x^2 \phi - g \lvert \phi
\rvert^2 \phi
\end{equation}
where $A_t$, $A_1$ and $A_2$ are given by \eqref{At} and \eqref{A1A2Initial}.
Our plan is to solve this equation perturbatively. However, in order for
this to work, we need to use \eqref{At} and \eqref{A1A2Initial}
to expand the $A$'s in the nonlinearity and replace them by $B$'s.
Even this expansion is not sufficient in the case of the first term in $ N(\phi,A)$, for which
we need to expand once more. Eventually this leads to an expression of the form
\[
N(\phi,A) = L \phi + N_{3,1} + N_{3,2} + N_{3,3} + N_{5,1} + N_{5,2}
+ \sum_{j=0}^7 E_j
\]
where the terms above are as follows:
\begin{enumerate}
\item $L$ contains the linear terms in $\phi$, which arise from the
first term in $N$ and the first term in the expansion of $A$. It has the
form
\begin{equation}\label{L}
L \phi = i Q_{12} (C,\phi) ,
\qquad C = H^{-1} \Delta^{-1} \lvert \phi_0 \rvert^2
\end{equation}
where
\[
Q_{12} (C,\phi) = \partial_1 C \partial_2 \phi - \partial_2 C \partial_1 \phi
\]
As mentioned before, this term significantly affects our function spaces constructions.

\item The terms $N_{3,1} $, $N_{3,2}$ and $N_{3,3}$  are the cubic
terms in $\phi$, described as follows:
\begin{equation}\label{N31}
 N_{3,1} = H^{-1}(\bar{\phi}\partial_1\phi)\partial_2\phi-
H^{-1}(\bar{\phi}\partial_2\phi)\partial_1\phi,
\end{equation}
originates from the first term in $N$ and the second term in $A$.
The ``null'' structure in $N_{3,1}$ is crucial in our estimates.
\begin{equation}\label{N32}
 N_{3,2} = H^{-1}(Q_{12}(\bar{\phi}, \phi)) \phi,
\end{equation}
originates from the second term in $N$ and the second term in $A$,
and also from the third term in $A$ and the first term in $A_t$.
This also exhibits a null structure that we take advantage of.
\begin{equation}\label{N33}
 N_{3,3} = |\phi|^2 \phi
\end{equation}
is the contribution of the last term in $N$.

\item The terms $N_{5,1} $, $N_{5,2}$ and $N_{5,2}$  are the quintic
terms in $\phi$, described as follows:
\begin{equation}\label{N51}
 N_{5,1} =  H^{-1}( H^{-1}(\bar \phi \partial \phi) |\phi|^2) \partial \phi
\end{equation}
occurs in the reexpansion of $A_{1}$ and $A_2$ in the first term in $N$.
\begin{equation}\label{N52}
 N_{5,2} =  H^{-1}(\bar \phi \partial \phi) H^{-1}(\bar \phi \partial \phi) \phi
\end{equation}
occurs in the fourth term in $N$, corresponding to the second term
in $A_1, A_{2}$.
\begin{equation}\label{N53}
 N_{5,3} =   H^{-1}\partial ( H^{-1}(\bar \phi \partial \phi) |\phi|^2)  \phi
\end{equation}
occurs in the third term in $N$, corresponding to the
second term in $A_{1},A_2$ arising in the $A_t$ expression.

In all these terms it is neither important which spatial derivatives are applied nor
where the bar goes in $\bar \phi \partial \phi$. While they look somewhat different,
in the proofs of the multilinear estimates they turn out to be essentially equivalent by a
duality argument.

\item The ``error'' terms are those which can be estimated in a relatively
simpler manner. We begin with the multilinear terms containing the
data for $A_x$, namely
\begin{equation}\label{E1}
E_1 = H^{-1}( H^{-1} A_x(0) |\phi|^2) \partial \phi
\end{equation}
from the reexpansion of the first term in $N$,
\begin{equation}\label{E2}
E_2 = H^{-1}\partial ( H^{-1} A_x(0) |\phi|^2)  \phi
\end{equation}
from the  second and third terms in $N$,
\begin{equation}\label{E3}
E_3 =H^{-1} A_x(0) H^{-1}( \bar \phi \partial \phi) \phi
\end{equation}
from the fourth term in $N$,
\begin{equation}\label{E4}
E_4 =(H^{-1} A_x(0))^2  \phi +   H^{-1} A_x(0) B   \phi + B^2 \phi.
\end{equation}
Finally we conclude with the remaining terms involving $B$,
\begin{equation}\label{E5}
E_5 = H^{-1}(B |\phi|^2) \partial \phi
\end{equation}
from the reexpansion of the first term in $N$,
\begin{equation}\label{E6}
E_6 = H^{-1}\partial ( B |\phi|^2)  \phi
\end{equation}
from the  second and third term in $N$,
\begin{equation}\label{E7}
E_7 = H^{-1}( \bar \phi \partial \phi) B \phi
\end{equation}
from the fourth term in $N$.
\end{enumerate}

%%%%%%%%%%%%%%%%%%%%%%%%%%%%%%%%%%%%%%%%%%%%%%%%%%%%%%%
%%%%%%%%%%                                                     
%%%%%%%%%%                      Function spaces               
%%%%%%%%%%                                                     
%%%%%%%%%%%%%%%%%%%%%%%%%%%%%%%%%%%%%%%%%%%%%%%%%%%%%%%
\section{Function spaces}\label{sect:fs}

\noindent In this section we define function spaces as in
\cite{KoTa07, HeTaTz10}, but with some suitable adaptations to the
problem at hand.  Spaces similar to those in \cite{KoTa07, HeTaTz10}
have been used to obtain critical results in different problems
\cite{HHK09,HeTaTz11}. We refer the reader to \cite[\S 2]{HHK09} for
detailed proofs of the basic properties of $U^p, V^p$ spaces.

For a unit vector $\e \in \mathbb{S}^1$, we denote by
$H_{\e}$ its orthogonal complement in $\mathbb{R}^2$ with
the induced measure. Define the lateral spaces
$L^{p,q}_{\e}$ with norms
\[
\|f\|_{L^{p,q}_\e}
=
\left( \int_{\mathbb{R}} \left( \int_{H_\e \times \R }|f(x\e+x',t)|^{q}dx'dt \right)^{\frac{p}{q}}
dx\right)^{\frac{1}{p}}
\]
with the usual modifications when $p = \infty$ or $q = \infty$.

Define the operator $P_{N,\e}$ by the Fourier multiplier
$\xi\rightarrow \psi_{N}(\xi\cdot \e)$, where $\psi_N$
has symbol $\psi_N(\xi)$ given by (\ref{PNsymbol}).
The following smoothing estimate plays an important
role in our analysis.
\begin{lem}[Local smoothing \cite{IoKe06, IoKe07}]
Let $f\in L^2(\R^2)$, $N\in 2^{\Z}, N\geq 1$, and $\mathbf{e}\in
\mathbb{S}^1$. Then
\begin{equation}\label{est:localsmoothing}
\|e^{it\Delta}P_{N,\mathbf{e}}f\|_{L^{\infty,
2}_{\mathbf{e}}}\lesssim N^{-\frac{1}{2}}\|f\|_{L^2}
\end{equation}
\end{lem}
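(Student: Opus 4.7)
The plan is a direct Plancherel computation at each fixed $x_1$. By rotational invariance of the Laplacian and of Lebesgue measure, I may rotate coordinates so that $\e = (1,0)$; then $P_{N,\e}$ becomes Fourier multiplication by $\psi_N(\xi_1)$ supported in $|\xi_1|\sim N$, and the goal reduces to
\[
\sup_{x_1\in\R}\|e^{it\Delta}P_{N,\e}f(x_1,\cdot,\cdot)\|_{L^2_{x_2,t}}\lesssim N^{-\frac12}\|f\|_{L^2}.
\]
This is the classical Kato $\tfrac12$-smoothing phenomenon: on the frequency support of $P_{N,\e}f$ the paraboloid $\tau=-|\xi|^2$ is transverse to the $\xi_1$-direction, and the strength of this transversality will supply the $N^{-1/2}$ gain.

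To execute, I would represent
\[
e^{it\Delta}P_{N,\e}f(x_1,x_2,t)=c\int e^{i(x_1\xi_1+x_2\xi_2)-it(\xi_1^2+\xi_2^2)}\psi_N(\xi_1)\hat f(\xi_1,\xi_2)\,d\xi_1 d\xi_2,
\]
freeze $x_1$, and take the spacetime Fourier transform in $(x_2,t)$. The $t$-integration yields $\delta(\tau+\xi_1^2+\xi_2^2)$; for each $(\xi_2,\tau)$ with $-\tau\ge\xi_2^2$ this delta has exactly two preimages $\xi_1^{\pm}(\xi_2,\tau)=\pm\sqrt{-\tau-\xi_2^2}$, each contributing a Jacobian $|2\xi_1^{\pm}|^{-1}$. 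The resulting identity is
\[
\mathcal{F}_{x_2,t}\bigl[e^{it\Delta}P_{N,\e}f(x_1,\cdot,\cdot)\bigr](\xi_2,\tau)=c'\sum_{\pm}\frac{e^{ix_1\xi_1^{\pm}}\psi_N(\xi_1^{\pm})\hat f(\xi_1^{\pm},\xi_2)}{|\xi_1^{\pm}|}.
\]
I would then apply Plancherel in $(\xi_2,\tau)$, use $|a+b|^2\le 2(|a|^2+|b|^2)$ to separate the two branches, and on each branch substitute $\tau\mapsto\xi_1$ with $|d\tau|=2|\xi_1|\,d\xi_1\sim N\,d\xi_1$ on the support of $\psi_N$. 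This gives
\[
\|e^{it\Delta}P_{N,\e}f(x_1,\cdot,\cdot)\|_{L^2_{x_2,t}}^2\lesssim \int\frac{|\psi_N(\xi_1)\hat f(\xi_1,\xi_2)|^2}{|\xi_1|^2}\cdot 2|\xi_1|\,d\xi_1 d\xi_2\lesssim N^{-1}\|f\|_{L^2}^2,
\]
with a constant independent of $x_1$, yielding the claim after taking square roots and supremum in $x_1$.

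There is no serious obstacle; the one point that deserves care is the interplay between the Jacobian weight $(2|\xi_1|)^{-1}$ and the frequency localization $|\xi_1|\sim N$, which is precisely what converts Plancherel into smoothing. The hypothesis $N\ge 1$ keeps us uniformly away from the degenerate stationary point $\xi_1=0$ of the paraboloid. Finally, the $x_1$-independence of $|\hat f(\xi_1^{\pm},\xi_2)|$ is what promotes the bound to one uniform in $x_1$, hence to the $L^{\infty}$-component of the lateral mixed norm $L^{\infty,2}_\e$.
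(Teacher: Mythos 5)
Your proof is correct and is the standard Plancherel/Fourier--slice argument for Kato $\tfrac12$-smoothing; the paper does not supply its own proof of this lemma (it cites Ionescu--Kenig), and the argument in those references is exactly the computation you carry out, so there is nothing to compare beyond a match.

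One small caveat worth flagging: your final step uses $|\xi_1|^{-1}\lesssim N^{-1}$ on $\supp\psi_N$, which is correct for $N\ge 2$, where $\psi_N$ is supported on the annulus $|\xi_1|\sim N$. For $N=1$, however, the paper's definition \eqref{PNsymbol} gives $\psi_1(\xi_1)=\psi(|\xi_1|)$, a low-pass cutoff whose support includes $\xi_1=0$, so the weight $|\xi_1|^{-1}$ is not bounded there and your sentence ``the hypothesis $N\ge 1$ keeps us uniformly away from the degenerate stationary point $\xi_1=0$'' is not literally accurate for $N=1$. This is a known subtlety of the statement rather than an error specific to your write-up; in the paper's applications $P_{N,\e}$ is always applied to functions that are already frequency-localized at scale $\gtrsim 1$, which makes the $N=1$ case harmless, but if you want the lemma as a self-contained assertion you should either restrict to $N\ge 2$ or note explicitly why the low-frequency contribution in $\xi_1$ is controlled in the setting where it is used.
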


Also recall the well-known Strichartz estimates.
\begin{lem}[Strichartz estimates \cite{St77, Tao06}] Let $(q,r)$ be
  any admissible pair of exponents, i.e.
  $\frac{1}{q}+\frac{1}{r}=\frac{1}{2}$ and $(q,r)\neq(2,\infty)$.
  Then we have the homogeneous Strichartz estimate
\begin{equation}\label{est:strichartz}
\|e^{it\Delta}f\|_{L^q_tL^r_x(\mathbb{R}\times\mathbb{R}^2)}\lesssim
\|f\|_{L^2_{x}(\mathbb{R}^2)}
\end{equation}
\end{lem}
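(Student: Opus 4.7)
The plan is to follow the classical $TT^\ast$ argument, reducing the estimate to the one-dimensional Hardy-Littlewood-Sobolev inequality via the pointwise dispersive decay of the free $2$D Schr\"odinger propagator. This is the Strichartz/Ginibre-Velo strategy, which in the present $d=2$ non-endpoint setting is particularly clean because no Keel-Tao machinery is needed.

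The first ingredient is the explicit representation
\[
e^{it\Delta}f(x)=\frac{1}{4\pi i t}\int_{\R^2}e^{i|x-y|^2/(4t)}f(y)\,dy,
\]
which yields the pointwise dispersive bound $\|e^{it\Delta}f\|_{L^\infty_x}\lesssim |t|^{-1}\|f\|_{L^1_x}$. Interpolating this with the $L^2_x\to L^2_x$ unitarity of $e^{it\Delta}$ gives, for every $r\in[2,\infty]$,
\[
\|e^{it\Delta}f\|_{L^r_x}\lesssim |t|^{-(1-2/r)}\|f\|_{L^{r'}_x}.
\]

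Next, set $Tf(t,x):=e^{it\Delta}f(x)$. Boundedness $T:L^2_x\to L^q_tL^r_x$ is equivalent, by duality, to boundedness of $TT^\ast$, namely
\[
TT^\ast g(t,x)=\int_{\R}e^{i(t-s)\Delta}g(s,\cdot)(x)\,ds.
\]
Applying Minkowski's inequality in the $x$ variable and then the dispersive bound, followed by Minkowski in $t$, the desired estimate $\|TT^\ast g\|_{L^q_tL^r_x}\lesssim\|g\|_{L^{q'}_tL^{r'}_x}$ is reduced to showing that convolution in one variable against the kernel $|t|^{-(1-2/r)}$ maps $L^{q'}(\R)\to L^q(\R)$.

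This last step is precisely the $1$D Hardy-Littlewood-Sobolev inequality, which holds provided $0<1-2/r<1$ together with the scaling relation $2/q=1-2/r$. The equality is exactly the admissibility condition $\tfrac1q+\tfrac1r=\tfrac12$; the strict inequality $1-2/r<1$ forces $r<\infty$, i.e.\ excludes $(q,r)=(2,\infty)$; and the remaining case $r=2$, $q=\infty$ is already handled trivially by $L^2$ unitarity. The only truly delicate point is the excluded endpoint itself, where HLS genuinely fails in $1$D and the corresponding $L^2_tL^\infty_x$ Strichartz estimate is known to be false in two space dimensions; since the hypothesis rules this pair out, no further work is required.
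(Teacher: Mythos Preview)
Your argument is correct: this is the standard Ginibre--Velo $TT^\ast$ proof, and all the steps (the $|t|^{-1}$ dispersive bound in two dimensions, Riesz--Thorin interpolation to $|t|^{-(1-2/r)}$, and the reduction to one-dimensional Hardy--Littlewood--Sobolev under the scaling relation $2/q=1-2/r$) are carried out cleanly. The exclusion of $(q,r)=(2,\infty)$ is handled appropriately by noting that HLS fails at that endpoint.

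The paper itself does not give a proof of this lemma; it is simply stated with citations to Strichartz's original paper and Tao's book and used as a black box. So there is no ``paper's own proof'' to compare against --- your write-up supplies exactly the classical argument those references contain, and it is more than adequate here.
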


\subsection{{$U^p$} and {$V^p$} spaces} \label{sec:upvp}
Throughout this section let $H$ be a separable Hilbert space over
$\C$. Let $\mathcal{Z}$ be the set of finite partitions
$-\infty \leq t_0<t_1<\ldots<t_K\leq \infty$ of the real line. If
$t_K=\infty$ and $v : \R \to H$, then we adopt the convention that
$v(t_K):=0$. Let $\chi_I:\R\to\R$ denote the (sharp)
characteristic function of a set $I\subset \R$.

\begin{defin}\label{def:u}
  Let $1\leq p <\infty$. For any $\{t_k\}_{k=0}^K \in \mathcal{Z}$ and
  $\{\phi_k\}_{k=0}^{K-1} \subset H$ with
  $\sum_{k=0}^{K-1}\|\phi_k\|_{H}^p=1$, we call the
  function $a:\R \to H$ defined by
  \begin{equation*}
    a=\sum_{k=1}^K\chi_{[t_{k-1},t_k)}\phi_{k-1}
  \end{equation*}
  a $U^p$-atom. We define the atomic space $U^p(\R,H)$ as the set of all
  functions $u: \R \to H$ admitting a representation
  \begin{equation*}
    u=\sum_{j=1}^\infty \lambda_j a_j \; \text{ for } U^p\text{-atoms } a_j,\;
    \{\lambda_j\}\in \ell^1
  \end{equation*}
  and endow it with the norm
  \begin{equation}\label{eq:norm_u}
    \|u\|_{U^p}:=\inf \left\{\sum_{j=1}^\infty |\lambda_j|
      :\; u=\sum_{j=1}^\infty \lambda_j a_j,
      \,\lambda_j\in \C,\; a_j \text{ a $U^p$-atom}\right\}
  \end{equation}
\end{defin}

\begin{rem}\label{rmk:prop_up} The spaces $U^p(\R,H)$ are Banach spaces and
  we observe that $U^p(\R,H)\hookrightarrow L^\infty(\R;H)$. Every
  $u\in U^p(\R,H)$ is right-continuous. On occasion the space $U^p$ is defined
in a restricted fashion by requiring that $t_0 > -\infty$. Then $u$ tends to $0$ as $t \to
  -\infty$. The only difference between the two definitions is in whether or not one adds
constant functions to $U^p$.
\end{rem}

\begin{defin}\label{def:v}
  Let $1\leq p<\infty$ , We define $V^p(\R,H)$ as the space of all functions $v:\R\to
    H$ such that
    \begin{equation}\label{eq:norm_v}
      \|v\|_{V^p}
      :=\sup_{\{t_k\}_{k=0}^K \in \mathcal{Z}} \left(\sum_{k=1}^{K}
        \|v(t_{k})-v(t_{k-1})\|_{H}^p\right)^{\frac{1}{p}}
    \end{equation}
    is finite.  %\footnote{Here we make use of our convention       $v(\infty)=0$.}.

%  \begin{enumerate}
  %\item    
  %\item Likewise, let $V^p_{rc}(\R,H)$ denote the closed subspace of
    %all right-continuous functions $v:\R\to H$ such that $\lim_{t\to
      %-\infty}v(t)=0$.%, endowed with the same norm \eqref{eq:norm_v}.
  %\end{enumerate}
\end{defin}

\begin{rem}\label{rmk:prop_v}
We require that two $V^p$ functions be equal if they are equal in the sense of distributions. 
Since such functions have at most countably many discontinuous points in time, 
we adopt the convention that all $V^p$ functions are right continuous, i.e.,
we assume we always work with the unique right-continuous representative
from the equivalence class.

  The spaces $V^p(\R,H)$   are Banach spaces and
  satisfy
\begin{equation} \label{up-emb}
U^p(\R,H)\hookrightarrow V^p(\R,H)\hookrightarrow U^q(\R,H)
  \hookrightarrow
  L^\infty(\R;H), \qquad p < q
\end{equation}
%  The space $V^p_{rc}(\R,H)$ inherits its norm from $V^p(\R,H)$.
\end{rem}

We denote by $DU^p$ the space of distributional derivatives of $U^p$ functions.
Then we have the following  very useful duality property:
\begin{lem}\label{lem:dual}
The following duality holds
\begin{equation}\label{up-dual}
(DU^p)^* = V^{p'}, \qquad 1 \leq p < \infty
\end{equation}
with respect to a duality relation that extends the standard $L^2$ duality.
\end{lem}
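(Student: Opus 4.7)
The plan is to prove \eqref{up-dual} in two steps: first I exhibit a pairing realizing $V^{p'} \hookrightarrow (DU^p)^*$, and then I reconstruct every $L \in (DU^p)^*$ as pairing against some $v$. For a $U^p$-atom $a = \sum_{k=1}^K \chi_{[t_{k-1},t_k)} \phi_{k-1}$ and $v \in V^{p'}$, I would define
\[
\langle \partial_t a, v\rangle := \sum_{k=0}^{K-1} \langle \phi_k, v(t_k) - v(t_{k+1})\rangle_H,
\]
which is exactly what formal integration by parts yields and therefore reduces to the standard $L^2$ pairing when $v$ is smooth. Since $\sum_k \|\phi_k\|_H^p = 1$ by the atom normalization, Hölder in the index $k$ gives $|\langle \partial_t a, v\rangle| \leq \|v\|_{V^{p'}}$. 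Extending by linearity over atomic decompositions of $u \in U^p$ and taking the infimum over all such decompositions yields
\[
|\langle \partial_t u, v\rangle| \leq \|u\|_{U^p}\|v\|_{V^{p'}}, \qquad u \in U^p,
\]
and also shows that the pairing depends on $u$ only through $\partial_t u$, since refining a partition merely regroups the telescoping sum above.

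For the converse, given $L \in (DU^p)^*$ I would reconstruct $v$ pointwise in $H$. For $t \in \R$ and $\phi \in H$, the function $\chi_{(-\infty,t)}\phi$ is a scaled $U^p$-atom of $U^p$-norm $\|\phi\|_H$, so setting
\[
\langle \phi, v(t)\rangle_H := -L\bigl(\partial_t(\chi_{(-\infty,t)}\phi)\bigr)
\]
defines $v(t) \in H$ with $\|v(t)\|_H \leq \|L\|$. To control $\|v\|_{V^{p'}}$, I would fix a partition $t_0 < \cdots < t_K$. By $\ell^p$--$\ell^{p'}$ duality in $H$, the quantity $\bigl(\sum_{k=1}^K \|v(t_{k-1}) - v(t_k)\|_H^{p'}\bigr)^{1/p'}$ equals the supremum of $\sum_{k=1}^K \langle \phi_{k-1}, v(t_{k-1}) - v(t_k)\rangle_H$ over all $\{\phi_k\}_{k=0}^{K-1}$ with $\sum_k \|\phi_k\|_H^p \leq 1$. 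By construction this last sum equals $L(\partial_t a)$ for the $U^p$-atom $a := \sum_k \chi_{[t_{k-1},t_k)} \phi_{k-1}$, and hence is bounded by $\|L\|$. Taking the supremum over partitions yields $\|v\|_{V^{p'}} \leq \|L\|$. That $L$ itself is then represented by this $v$ via the pairing of the first paragraph follows because the two functionals agree on atoms by construction, and atoms are dense in $U^p$ by Definition~\ref{def:u}.

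The main obstacle is not analytic but a matter of keeping conventions consistent: one must harmonize the right-continuous representative convention from Remark~\ref{rmk:prop_v} with the behavior of the cutoff $\chi_{(-\infty,t)}\phi$, and decide between the restricted and unrestricted versions of $U^p$ discussed in Remark~\ref{rmk:prop_up} (this determines whether $v$ is pinned down on all of $\R$ or only modulo an additive constant in $H$). Once a consistent set of conventions is fixed --- for instance, working with the restricted $U^p$ and normalizing $\lim_{t\to -\infty} v(t) = 0$ --- the two bounds above combine to give the isometric identification \eqref{up-dual}, with a pairing manifestly extending the standard $L^2$ duality.
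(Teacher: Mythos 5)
The paper does not prove this lemma itself: the sentence immediately after the statement simply refers the reader to \cite{HHK09}, so there is no in-paper argument to compare against. Your proof is correct and reproduces the standard route found in that reference: define the pairing on $U^p$-atoms by the telescoping sum, obtain $V^{p'}\subset (DU^p)^*$ from H\"older in the atomic index, and for the reverse inclusion recover $v(t)$ by testing $L$ on $\partial_t(\chi_{(-\infty,t)}\phi)$, then use $\ell^p$--$\ell^{p'}$ duality over partitions to get $\|v\|_{V^{p'}}\le\|L\|$. One step worth making fully explicit is the well-definedness of the extension of the pairing from atoms to all of $U^p$: you must show that two different atomic decompositions of the same $u$ (not merely two partitions of the same atom) give the same value. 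Your remark about regrouping a telescoping sum only covers refinements of a single atom's partition; a clean way to close this is to note that for step functions the pairing is a Riemann--Stieltjes integral against $v$, hence depends only on the function and not the representation, and then pass to the limit using the bound $|\langle\partial_t(a-a'),v\rangle|\le\|a-a'\|_{U^p}\|v\|_{V^{p'}}$ that your first paragraph already provides. The convention issues you flag at the end---the choice of right-continuous representative for $v$, the value assigned at $\pm\infty$, and the restricted versus unrestricted $U^p$ discussed in Remark~\ref{rmk:prop_up}---are indeed exactly the remaining bookkeeping needed to turn the two inequalities into the stated isometry.
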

We refer the reader to \cite{HHK09} for a more detailed discussion.

We also record a useful interpolation property of the spaces $U^p$
and $V^p$ (cf. \cite[Proposition 2.20]{HHK09}).
\begin{lem}\label{lem:interpol}
  Let $q_1,q_2>2$, $E$ be a Banach space and
  \[T:U^{q_1}\times U^{q_2}\to E\] a bounded
  bilinear operator with $\|T(u_1,u_2)\|_E \leq C
  \prod_{j=1}^2\|u_j\|_{U^{q_j}}$.  In addition, assume that there
  exists $C_2\in (0,C]$ such that the estimate $\|T(u_1,u_2)\|_E
  \leq C_2 \prod_{j=1}^2\|u_j\|_{U^2}$ holds true.  Then $T$
  satisfies the estimate
  \[
  \|T(u_1,u_2)\|_E \ls C_2
  (\ln\frac{C}{C_2}+1)^2\prod_{j=1}^2\|u_j\|_{V^2}, \quad u_j \in
  V^2,\; j=1,2.
  \]
\end{lem}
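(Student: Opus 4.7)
The plan is a standard interpolation argument in the $U^p$/$V^p$ scale: decompose each $u_j \in V^2$ dyadically into pieces living at different scales, then estimate $T(u_1, u_2)$ by applying the $U^2 \times U^2$ bound (with constant $C_2$) to the low-scale parts and the $U^{q_1} \times U^{q_2}$ bound (with constant $C$) to the remaining parts. The factor $(\ln(C/C_2)+1)^2$ arises because one must spend roughly $\log(C/C_2)$ scales in \emph{each} argument before the $U^{q_j}$ bound becomes as favorable as the $U^2$ bound.

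Concretely, for $u \in V^2$ with $\|u\|_{V^2} \leq 1$ I build a greedy atomic decomposition $u = \sum_{n \geq 0} v^n$ via stopping times $t_0^n = -\infty$ and $t_{k+1}^n := \inf\{t > t_k^n : \|u(t) - u(t_k^n)\|_H > 2^{-n}\}$. Letting $u^n$ be the right-continuous step function with value $u(t_k^n)$ on $[t_k^n, t_{k+1}^n)$ and $v^n := u^n - u^{n-1}$ (with $u^{-1} := 0$), the $V^2$ bound forces at most $\lesssim 2^{2n}$ intervals in the $n$-th partition, so $v^n$ has supremum norm $\lesssim 2^{-n}$ on $\lesssim 2^{2n}$ pieces. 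Directly from Definition \ref{def:u},
\[
\|v^n\|_{U^2} \lesssim 1, \qquad \|v^n\|_{U^q} \lesssim 2^{n(2-q)/q} \text{ for } q > 2.
\]
For parameters $N_1, N_2 \geq 0$, split $u_j = u_j^{\leq N_j} + u_j^{>N_j}$ according to this decomposition. The triangle inequality in the Banach spaces $U^p$ gives $\|u_j^{\leq N_j}\|_{U^2} \lesssim N_j \|u_j\|_{V^2}$, while the geometric decay yields $\|u_j^{>N_j}\|_{U^{q_j}} \lesssim 2^{N_j(2-q_j)/q_j} \|u_j\|_{V^2}$. Expanding $T(u_1, u_2)$ bilinearly into four pieces, using the $C_2$ bound on the low$\times$low piece and the $C$ bound on the three other pieces (together with the embedding $V^2 \hookrightarrow U^{q_j}$ from \eqref{up-emb} for the uncontrolled factors), I get
\[
\|T(u_1, u_2)\|_E \lesssim \Bigl(C_2 N_1 N_2 + C\bigl(2^{N_1(2-q_1)/q_1} + 2^{N_2(2-q_2)/q_2}\bigr)\Bigr) \|u_1\|_{V^2}\|u_2\|_{V^2}.
\]
Choosing $N_j$ to be the smallest non-negative integer with $C \cdot 2^{N_j(2-q_j)/q_j} \leq C_2$, so that $N_j \sim \log(C/C_2)$ with constant depending on $q_j$, yields the advertised bound $C_2(\log(C/C_2) + 1)^2 \prod_j \|u_j\|_{V^2}$.

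The substantive ingredient is the greedy atomic decomposition of Step~1: one must verify that the stopping-time construction yields step functions with the clean $U^2$ and $U^q$ norm bounds above. This uses the right-continuous representative (Remark \ref{rmk:prop_v}) so that the stopping times are well-defined, plus the combinatorial observation that the $V^2$ budget caps the number of $2^{-n}$-size jumps at $\lesssim 2^{2n}$. Once that is granted, the interpolation itself is routine, with the two logarithmic factors simply reflecting one factor $\log(C/C_2)$ per argument.
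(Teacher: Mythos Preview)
Your argument is correct and essentially self-contained, but it differs from the paper's route. The paper does not build the greedy stopping-time decomposition directly; instead it invokes the one-variable interpolation result \cite[Proposition 2.20]{HHK09} as a black box and iterates it: first freeze $u_2 \in U^2$ and apply the one-variable result to upgrade $u_1$ from $U^2$ to $V^2$ (picking up one factor of $\ln(C/C_2)+1$), then freeze $u_1 \in V^2$ and repeat in the second argument. Your approach unpacks that cited proposition --- the greedy decomposition $u = \sum_n v^n$ with $\|v^n\|_{U^q} \lesssim 2^{n(2-q)/q}$ is exactly its engine --- and runs it simultaneously in both variables via the low/high splitting. The payoff of the paper's approach is brevity and modularity once the one-variable result is available; the payoff of yours is that it is self-contained and makes transparent where the two logarithms come from (one cutoff scale $N_j$ per argument). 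One small point worth making explicit in your write-up: when you bound the cross terms like $T(u_1^{\leq N_1}, u_2^{>N_2})$ via the $C$-bound and invoke $V^2 \hookrightarrow U^{q_1}$ for the ``uncontrolled'' factor, you are implicitly using that $\|u_1^{\leq N_1}\|_{V^2} \leq \|u_1\|_{V^2}$, which holds because $u_1^{\leq N_1} = u_1^{N_1}$ is a step-function sampling of $u_1$.
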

\begin{proof}
The proof is the same as that in \cite[Lemma 2.4]{HeTaTz11}.
For fixed $u_2$, let $T_1u:=T(u,u_2)$. Then we have that
\[
\|T_1u\|_E\leq D_1 \|u\|_{U^{q_1}}
\quad \text{and} \quad
\|T_1u\|_E\leq D_1' \|u\|_{U^{2}}.
\]
Here $D_1=C \|u_2\|_{U^{q_2}}, D_1'=C_2 \|u_2\|_{U^{2}} $.

From the fact that $\|u_2\|_{U^{q_j}}\leq \|u_2\|_{U^{2}}$ and
\cite[Proposition 2.20]{HHK09}, we obtain
\begin{equation}\label{eq: u1vu2u}
  \|T(u_1,u_2)\|_E=\|T_1u_1\|_E \lesssim C_2 (\ln \frac{C}{C_2} +1)
  \|u_1\|_{V^2}\|u_2\|_{U^{2}} 
  \end{equation}
  Then we can repeat the argument by fixing $u_1$, using
  estimate (\ref{eq: u1vu2u}),
  and
  \[\|T(u_1,u_2)\|_E \leq C
  \prod_{j=1}^2\|u_j\|_{U^{q_j}} \leq C
  \|u_1\|_{V^2}\|u_2\|_{U^{q_j}}\]
%  For fixed $u_2,u_3$ let $T_1u:=T(u,u_2,u_3)$. From the assumption we
%  have
%  \begin{align*}
%    \|T_1u\|_E&\leq D_1 \|u\|_{U^{q_1}} \text{ where }
%    D_1=C \|u_2\|_{U^{q_2}}\|u_3\|_{U^{q_3}},\\
%    \|T_1u\|_E&\leq D_1' \|u\|_{U^{2}} \text{ where } D_1'=C_2
%    \|u_2\|_{U^{2}}\|u_3\|_{U^{2}}.
%  \end{align*}
%  An application of \cite[Proposition 2.20]{HHK09} and the bound
%  $\|u_j\|_{U^{q_j}}\leq \|u_j\|_{U^{2}}$ for $j=2,3$ yield
%  \[
%  \|T_1u\|_E \ls C_2 (\ln \frac{C}{C_2} +1)
%  \|u\|_{V^2}\|u_2\|_{U^{2}}\|u_3\|_{U^{2}}.
%  \]
%  The embedding $V^2_{rc}\hookrightarrow U^{q_j}$ allows us to repeat
%  this argument with respect to the second and third argument of $T$.
\end{proof}

%We define the spatial Fourier coefficients
%\[
%\widehat{f}(\xi):=(2\pi)^{-3/2}\int_{ [0,2\pi]^{3}} e^{-ix\cdot
%\xi} f(x) \;dx,\; \xi \in \Z^{3}.
%\]
%and the space time Fourier transform
%\[
%\mathcal{F}u(\tau,\xi):=(2\pi)^{-2}\int_{ \R\times [0,2\pi]^{3}}
%e^{-i(x\cdot \xi+t\tau)} u(t,x) \;dt dx,\; (\tau,\xi) \in
%\R\times\Z^{3}.
%S\]

Let $\psi : \R \to [0,1]$ be a smooth even function compactly
supported in $[-2, 2]$ and equal to 1 on $[-1, 1]$. For dyadic
integers $N \geq 1 $ , set
\begin{equation} \label{PNsymbol}
\psi_N(\xi)=\psi\Big(\frac{|\xi|}{N}\Big)-\psi\Big(\frac{2|\xi|}{N}\Big),
\quad \text{for } N\geq 2 \quad \text{and} \quad
\psi_1(\xi)=\psi(|\xi|).
\end{equation}
For each such $N \geq 1$, define the frequency localization
operator $P_N:L^2(\R^2)\to L^2(\R^2)$ as the Fourier multiplier
with symbol $\psi_N$. Moreover, let $P_{\leq N}:=\sum_{1\leq M\leq
N}P_M$. We set $u_N:=P_Nu$ for short.
%$u_{\leq N} := P_{\leq N} u$

% Let $s \in \R$.  We define the Sobolev space $H^s(\R^2)$ as the
% space of all $L^2(\R^2)$-functions for which the norm
% \[
% \|f\|_{H^s(\R^2)}:=\left(\sum_{N \geq 1,
% \textnormal{ dyadic}}N^{2s} \|P_N
%   f\|_{L^2(\R^2)}^2\right)^{\frac12}
% \]
% is finite.

We now introduce $U^p, V^p$-type spaces that are adapted to the
linear Schr\"odinger flow.
\begin{defin}\label{def:delta_norm}
  For $s \in \R$, let $U^p_\Delta H^s $ (resp.~$V^p_\Delta H^s$) be
  the space of all functions $u:\R\to H^s(\R^2)$ such that $t \mapsto
  e^{-it \Delta}u(t)$ is in $U^p(\R,H^s)$ (resp.~$V^p(\R,H^s)$), with
  respective norms
  \begin{equation}\label{eq:delta_norm}
    \| u\|_{U^p_\Delta H^s} = \| e^{-it \Delta} u\|_{U^p(\R,H^s)},
    \qquad
    \| u\|_{V^p_\Delta H^s} = \| e^{-it \Delta} u\|_{V^p(\R,H^s)}
  \end{equation}
\end{defin}
%Spaces of this type have been successfully used as replacements for
%$X^{s,b}$ spaces and are still effective at critical scaling
%(see for instance \cite{KoTa05, KoTa07, HHK09}).

\begin{rem}\label{rem:ext}
  The embeddings in Remark \ref{rmk:prop_v}  and Lemma \ref{lem:interpol}
  naturally extend to the spaces $U^p_{\Delta}H^s$ and
  $V^p_{\Delta}H^s$.
\end{rem}

To clarify the roles of the $U^2_{\Delta},V^2_{\Delta}$ spaces, we introduce the
$X^{0, b}$-type spaces defined via the norms
\[
\|u\|_{\dot{X}^{0,\frac{1}{2},1}} = \sum_{\vartheta} \left(
\int_{|\tau-\xi^2|=\vartheta}|\tilde{u}(\tau,\xi)|^2|\tau-\xi^2|\,d\xi
\,d\tau \right)^{\frac{1}{2}}
\]
and
\[
\|u\|_{\dot{X}^{s,\frac{1}{2},\infty}} = \sup_{\vartheta} \left(
\int_{|\tau-\xi^2|=\vartheta}|\tilde{u}(\tau,\xi)|^2|\tau-\xi^2|\,d\xi
\,d\tau \right)^{\frac{1}{2}}
\]
For these spaces we have the embeddings \cite{KoTa07}
\begin{equation}\label{uvbesov}
\dot{X}^{0,\frac{1}{2},1}\subset U^2_{\Delta} L^2 \subset
V^2_{\Delta} L^2 \subset \dot{X}^{0,\frac{1}{2},\infty}
\end{equation}
From these inclusions we can conclude that the $U^2_{\Delta}$ and
$V^2_{\Delta}$ norms are equivalent when restricted in modulation
to a single dyadic scale.

Another straightforward consequence of the definitions (see for instance \cite[Proposition
    2.19]{HHK09}) is that one can extend the local smoothing estimate and Strichartz
    estimates to general $U^p_{\Delta}$ functions:
\begin{align} \label{est:u2localsmoothing}
\|e^{it\Delta}P_{N,\mathbf{e}}f\|_{L^{\infty,
2}_{\mathbf{e}}}
&\lesssim N^{-\frac{1}{2}}\|f\|_{U^2_{\Delta}} \\
\label{est:UpStrichartz}
\|e^{it\Delta}f\|_{L^q_tL^r_x(\mathbb{R}\times\mathbb{R}^2)}
&\lesssim
\|f\|_{U^p_{\Delta}}
\end{align}
Here $(q,r)$ is any admissible pair of exponents and $p := \min{(q,r)}$.

Finally, in the case of free solutions for the Schr\"odinger equation we can
easily do orthogonal frequency decompositions. For the $U^2_{\Delta}$ and $V^2_{\Delta}$
functions we have the following partial substitute:

\begin{lem}
\label{U2:sqsum}
Let $1 = \sum_R P_R(D)$ be a locally finite partition of unity in frequency,
with uniformy bounded symbols. Then we have
the dual bounds
\[
\sum_R \| P_R u\|_{U^2_\Delta}^2 \lesssim \|u\|_{U^2_\Delta}^2
\]
respectively
\[
\| \sum_R P_R f_R\|_{V^2_\Delta}^2 \lesssim  \sum_R \|
f_R\|_{V^2_\Delta}^2
\]
\end{lem}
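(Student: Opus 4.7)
The plan is to prove the two bounds separately: the first via a direct atomic decomposition argument, and the second either by duality from Lemma \ref{lem:dual} or, more elementarily, directly from the definition of $V^2_\Delta$ together with Plancherel-type almost-orthogonality for the family $\{P_R\}$. Throughout, one uses that $P_R$ commutes with $e^{it\Delta}$, so that $e^{-it\Delta}P_R u = P_R(e^{-it\Delta}u)$, reducing everything to statements about $U^2(\R,L^2)$ and $V^2(\R,L^2)$.

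For the first inequality, write $w = e^{-it\Delta}u$ and take any atomic decomposition $w = \sum_j \lambda_j a_j$ with $U^2$-atoms $a_j = \sum_k \chi_{[t^j_{k-1},t^j_k)}\phi^j_{k-1}$ normalized by $\sum_k \|\phi^j_{k-1}\|_{L^2}^2 = 1$. Then $P_R a_j = \sum_k \chi_{[t^j_{k-1},t^j_k)}P_R \phi^j_{k-1}$ is a scalar multiple $c^j_R\, \tilde{a}^j_R$ of a $U^2$-atom, where
\[
(c^j_R)^2 = \sum_k \|P_R \phi^j_{k-1}\|_{L^2}^2 .
\]
The uniform symbol bound together with the locally finite partition-of-unity hypothesis yields $\sum_R \|P_R \phi\|_{L^2}^2 \lesssim \|\phi\|_{L^2}^2$ for every $\phi$, so $\sum_R (c^j_R)^2 \lesssim 1$. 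Hence $\|P_R u\|_{U^2_\Delta} \leq \sum_j |\lambda_j|\, c^j_R$, and by Cauchy–Schwarz
\[
\sum_R \Bigl(\sum_j |\lambda_j|\, c^j_R\Bigr)^2 \leq \Bigl(\sum_j |\lambda_j|\Bigr)\sum_j |\lambda_j| \sum_R (c^j_R)^2 \lesssim \Bigl(\sum_j |\lambda_j|\Bigr)^2.
\]
Taking the infimum over atomic decompositions gives the claimed bound.

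For the second inequality, set $g_R = e^{-it\Delta}P_R f_R$ and fix an arbitrary partition $\{t_k\}\in\mathcal{Z}$. Since $g_R(t)$ is frequency-localized to the support of the symbol of $P_R$, the increments $g_R(t_k)-g_R(t_{k-1})$ inherit this localization, so at each $k$ the $L^2$ orthogonality (almost-orthogonality in the locally-finite case) yields
\[
\Bigl\|\sum_R \bigl(g_R(t_k) - g_R(t_{k-1})\bigr)\Bigr\|_{L^2}^2 \lesssim \sum_R \|g_R(t_k) - g_R(t_{k-1})\|_{L^2}^2.
\]
Summing in $k$, swapping the order of summation, and then taking the supremum over partitions gives $\|\sum_R P_R f_R\|_{V^2_\Delta}^2 \lesssim \sum_R \|P_R f_R\|_{V^2_\Delta}^2$, and one finishes by noting that $P_R$ is bounded on $V^2_\Delta L^2$ uniformly in $R$, since it is an $L^2$-bounded Fourier multiplier commuting with $e^{it\Delta}$ and the $V^2$-norm is computed from $L^2$ differences.

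The only mildly delicate point is the passage $\sum_R \|P_R\phi\|_{L^2}^2 \lesssim \|\phi\|_{L^2}^2$ and its increment-wise analogue when the $P_R$ are not exact projections but merely have uniformly bounded symbols summing (locally finitely) to $1$; this is a standard Plancherel argument, but is the step where one must be careful about constants. Everything else is bookkeeping with atoms on the $U^2$-side and with partitions on the $V^2$-side, and indeed the second bound can alternatively be obtained from the first by the duality $(DU^2)^* = V^2$ of Lemma \ref{lem:dual}.
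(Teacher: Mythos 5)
Your proof is correct; the paper explicitly leaves this lemma to the reader as ``straightforward,'' and your argument is precisely the natural one. For the $U^2$ bound you use the atomic decomposition, the renormalization $P_R a_j = c^j_R\,\tilde a^j_R$ with $(c^j_R)^2 = \sum_k\|P_R\phi^j_{k-1}\|_{L^2}^2$, the almost-orthogonality $\sum_R(c^j_R)^2 \lesssim 1$ coming from the locally finite partition of unity, and a Cauchy--Schwarz in $j$ to pass from $\ell^1$ to squares; taking the infimum over decompositions closes it. For the $V^2$ bound you fix a partition $\{t_k\}$, use Plancherel almost-orthogonality on the frequency-localized increments $g_R(t_k)-g_R(t_{k-1})$, swap sums, and take the supremum; combined with the uniform $V^2_\Delta$-boundedness of the individual $P_R$ this gives the claim. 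Both steps go through exactly as you describe, and your observation that the second bound also follows from the first via the duality $(DU^2)^* = V^2$ of Lemma~\ref{lem:dual} (applying the $U^2$ bound to the adjoint multipliers $P_R^*$) is correct as well, so the two estimates are genuinely dual to one another, as the lemma's statement already signals.
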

The proof is straightforward and is left for the reader.

\subsection{Lateral $U^p$ and $V^p$ spaces}
Unfortunately the above function spaces are insufficient for
closing the multilinear estimates in our problem. Instead
we also need to  define the lateral $U^p$ and $V^p$ spaces.

Given a unit vector $\e \in \Sp^1$, we consider orthonormal coordinates
$(\xi_\e,\xi'_\e)$ with $\xi_\e = \xi \cdot \e$. Then we define the
Fourier region
\[
A_\e = \{ (\tau,\xi) \in \R^2; \ \xi \cdot \e > \frac14 |\xi|,
|\tau-\xi^2| < \frac{1}{32} \xi^2 \}
\]
In this region we have
\begin{equation}\label{xiereg}
\tau - \xi'^2_\e \geq \frac{1}{64}(|\tau|+\xi'^2_\e)
\end{equation}
and therefore can factor the symbol of the Schr\"odinger operator:
\[
\xi^2 - \tau =
(\xi_\e + \sqrt{\tau - \xi'^2_\e})(\xi_\e - \sqrt{\tau - \xi'^2_\e}) \approx |\xi|(\xi_\e - \sqrt{\tau - \xi'^2_\e})
\]
Hence instead of considering the forward Schro\"dinger evolution
we can work with the lateral flow
\[
\partial_\e - i L_\e, \qquad L_\e = \sqrt{ -i \partial_t - \partial'^2_\e}
\]
for functions frequency localized in the region \eqref{xiereg}.
We denote the corresponding $U^p$ and $V^p$ function spaces by
$U^p_\e$ and $V^p_\e$, respectively.

Now we are ready to define the nonlinear component of our function
spaces, namely $U^{2,\sharp}$ and $V^{2,\sharp}$. For that we need some
multipliers, denoted by $P_\e$, adapted to the regions $A_\e$.
The space $U^{2,\sharp}$ is given by
\begin{equation}\label{u2sharp}
U^{2,\sharp} = U^2_\Delta + |D|^{-\frac12} \Sigma_{\e}  P_\e U^2_\e
\end{equation}
In other words it can be thought of as an atomic space where the atoms
are normalized $U^2_\Delta$ functions and normalized $U^2_\e$ functions.

The space $V^{2,\sharp}$ is given by the norm
\begin{equation}\label{v2sharp}
\|\phi\|_{V^{2,\sharp}} = \|\phi\|_{V^2_\Delta} +
\sup_\e \| |D|^{\frac12}  P_\e \phi\|_{V^2_\e}
\end{equation}
By $U^{2,\sharp}_k$, respectively $V^{2,\sharp}_k$, we denote the corresponding
spaces of functions which are localized at frequency $2^k$.

The main properties of these spaces are summarized in the following

\begin{prop}\label{p:u2sharp}
The spaces $U^{2,\sharp}$ and $V^{2,\sharp}$ defined above have the following
properties:

a) Inclusion:
\begin{equation}\label{u2sharp-emb}
U^{2,\sharp} \subset V^{2,\sharp}
\end{equation}

b) Duality:
\begin{equation}\label{u2sharp-dual}
[(i \partial_t - \Delta) U^{2,\sharp}]^* = V^{2,\sharp}
\end{equation}

c) Truncation. For any time interval $I$ we have
\begin{equation}\label{Iu2sharp-emb}
\chi_I : U^{2,\sharp} \to V^{2,\sharp}
\end{equation}
\end{prop}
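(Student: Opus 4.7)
The unifying observation behind all three parts is the symbol factorization $\xi^2-\tau \approx |\xi|(\xi_\e - \sqrt{\tau - \xi'^2_\e})$ on $A_\e$, which lifts to the operator-level equivalence $(i\partial_t - \Delta) \sim -i|D|(\partial_\e - iL_\e)$ after composition with $P_\e$. Thus the Schr\"odinger $U^2_\Delta/V^2_\Delta$ structure and the lateral $U^2_\e/V^2_\e$ structure can be translated into one another (up to the zero-order factors $|D|^{\pm 1/2}$ already built into \eqref{u2sharp}--\eqref{v2sharp}), which is what makes the two parts of the $U^{2,\sharp}$ and $V^{2,\sharp}$ norms compatible.

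For (a) I argue atomically. A $U^2_\Delta$ atom lies in $V^2_\Delta$ by the standard embedding \eqref{up-emb} applied to $e^{-it\Delta}\phi$; its $|D|^{1/2}P_\e$-image lies in $U^2_\e \subset V^2_\e$ once the factorization is used to rewrite the frequency-localized $P_\e e^{it\Delta} v$ as an $e^{ix_\e L_\e}$-evolution of a single normalized $L^2$ profile. Symmetrically, a lateral atom $|D|^{-1/2}P_\e u_\e$ lies in $V^2_\e$ tautologically and in $V^2_\Delta$ by running the factorization in reverse; its $|D|^{1/2}P_{\e'}$-norms for $\e' \neq \e$ are controlled by the near-disjointness (at a fixed dyadic frequency) of the supports $A_\e$ and $A_{\e'}$.

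For (b) I dualize each atomic block. On $U^2_\Delta$, conjugating by $e^{it\Delta}$ turns $(i\partial_t-\Delta)$ into $i\partial_t$, so Lemma \ref{lem:dual} produces $V^2_\Delta$ as the dual. On $|D|^{-1/2}P_\e U^2_\e$, the factorization identifies $(i\partial_t-\Delta)|D|^{-1/2}P_\e$ with $|D|^{1/2}P_\e(\partial_\e - iL_\e)$ up to bounded symbols, and the lateral analogue of Lemma \ref{lem:dual} (proved identically to the time version) yields dual $|D|^{1/2}P_\e V^2_\e$. Because $U^{2,\sharp}$ is a \emph{sum} over $\{\Delta\}\cup\{\e\}$ while $V^{2,\sharp}$ is the corresponding \emph{intersection and supremum}, the atomic dualities aggregate into the stated identity.

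For (c) the $V^2_\Delta$ component is straightforward: the embedding $V^2\hookrightarrow L^\infty$ means a time truncation adds at most two jumps of height $\|\phi\|_{L^\infty} \lesssim \|\phi\|_{U^{2,\sharp}}$, so $\chi_I$ is bounded on $V^2_\Delta$. For the lateral component, multiplication by $\chi_I(t)$ is a pointwise contraction in the $L^2_{x'_\e,t}$ variable, hence bounded directly on $V^2_\e$ from the definition. The genuine obstacle is that $\chi_I$ does not commute with the space-time multiplier $P_\e$, so one must control the commutator $[|D|^{1/2}P_\e,\chi_I]$ acting on $U^{2,\sharp}$ atoms. I plan to handle this through a kernel bound on $[P_\e,\chi_I]$ that exploits the smoothness of the symbol $\psi_\e$ in $\tau$ together with the $L^\infty$ bound on the atoms provided by part (a); combined with the pointwise estimate above, this closes the mapping into $V^{2,\sharp}$.
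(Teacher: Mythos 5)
The central difficulty in this proposition is not the symbol factorization but the \emph{atomic structure}, and your proposal misidentifies where the work lies. A $U^2_\Delta$ atom is a step function in $t$: each piece is a free Schr\"odinger wave, and the jumps occur at temporal hyperplanes $\{t = t_k\}$. A $U^2_\e$ atom is a step function in $x_\e$, with jumps at $\{x\cdot\e = a_i\}$. The factorization $\xi^2-\tau \approx |\xi|(\xi_\e - \sqrt{\tau-\xi'^2_\e})$ on $A_\e$ lets you rewrite a \emph{single} free wave as a lateral evolution, but it does not turn the temporal step structure into a lateral step structure. Hence your claim that the $|D|^{1/2}P_\e$-image of a $U^2_\Delta$ atom ``lies in $U^2_\e$ ... as an $e^{ix_\e L_\e}$-evolution of a single normalized $L^2$ profile'' is false for any atom with more than one step. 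What is true is only the weaker inclusion into $V^2_\e$, and that is precisely the nontrivial content of the proposition; the paper proves it in Lemma~\ref{u2sharp-v2} by passing to the dual bilinear form $\langle (i\partial_t - \Delta)\chi_I P f, Pg\rangle$, reducing to atoms, and carrying out a delicate case analysis (balanced vs.~unbalanced step intervals, Cauchy--Schwarz over the almost-orthogonal pieces, propagation estimates between the hyperplanes $\{x\cdot\e_1 = a_i\}$ and $\{x\cdot\e_2 = c_j\}$). Your argument collapses this to a tautology, so the main step is missing. The same problem recurs for a lateral atom $|D|^{-1/2}P_{\e_0}v$: you need $U^2_{\e_0} \subset V^2_{\e}$ for $\e\neq\e_0$, and ``near-disjointness of $A_{\e_0}$ and $A_\e$'' is not available — these regions overlap heavily for nearby admissible directions, and the lemma is stated precisely for two admissible directions of the same angle $A$.

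Part (c) has the same gap. You correctly note that $\chi_I$ is bounded on each $V^2$ component separately, and that the obstacle is the non-commutativity $[\,|D|^{1/2}P_\e, \chi_I]$. But the commutator-kernel plan you sketch cannot stand alone, because even if $P_\e$ and $\chi_I$ commuted you would still need the nontrivial embedding $U^2_{\e_1}\subset V^2_{\e_2}$. Moreover $[\chi_I, P_\e]$ is not small in any usable norm ($\chi_I$ is a sharp cutoff); the paper does not bound this commutator directly but instead introduces the twin bilinear forms $Q_L$ and $Q_R$, shows their difference is controlled by energy traces at the endpoints of $I$ via finite speed of propagation, and then estimates whichever of $Q_L, Q_R$ is favorable for each pair of atomic steps. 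This coupling of the embedding and the truncation in a single estimate is what the paper emphasizes, and it is absent from your proposal. Note also the paper deduces (a) from (c) with $I=\R$, whereas you handle them as separate arguments, which further obscures that the two share the same core difficulty. Your sketch of part (b) does match the paper's one-line reduction to Lemma~\ref{lem:dual}, and your heuristic about the factorization is a valid starting intuition — but a complete proof would need to supply something equivalent to Lemma~\ref{u2sharp-v2}.
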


We postpone the proof of this result for later in the section.  Part
(a) is a special case of (c) when $I = \R$. Part (b) is a direct
consequence of the duality result in Lemma~\ref{lem:dual}. Part (c) is
proved in Lemma~\ref{u2sharp-v2}.

\subsection{The $l^2_k$ spatial structure.}
We need one additional structural layer to overlay on top of the
$U^2$ and $V^2$ structure, which has to do with the fact that
we are seeking to solve the problem locally in time. Thus all the
estimates we will have to prove apply to functions which are localized in
time to a compact interval. Within such an interval, waves at frequency
$2^k$ travel a distance of $O(2^k)$, with rapidly decreasing tails farther
out. Thus if we partition the space into $2^k$ sized squares, the interaction
of separated squares is negligible. This leads us to introduce
a  local in time partition of unity
\[
1 = \sum_{m \in \Z^2}  \chi_k^m(x,t), \qquad
\chi_k^m(x,t) = \chi_0(t) \chi(2^{-k} x - m)
\]
and corresponding norms
\[
\|\phi\|_{ l^2_k U^{2,\sharp} }^2  = \sum _{m \in \Z^2}
\| \chi_k^m(x,t) \phi\|_{U^{2,\sharp}}
\]
We similarly define  the $l^2_k V^{2,\sharp}$ and $ l^2_kDU^{2,\sharp} $ norms.
To relate these norms with the  previous ones we have the following:

\begin{prop}\label{p:l2Xsharp}
a) For all $\phi \in U_k^{2, \sharp}$ localized at frequency $2^k$ we have
\begin{equation}\label{xksharp-l2xk}
\|\phi\|_{l^2_k U^{2,\sharp}} \lesssim \|\phi\|_{U^{2,\sharp}}
\end{equation}

b) For all $\phi$ localized at frequency $2^k$ and with compact
support in time we have
\begin{equation}\label{xk-l2xk}
\|\phi\|_{ V^{2,\sharp}} \lesssim \|\phi\|_{l^2_k V^{2,\sharp}}
\end{equation}
\end{prop}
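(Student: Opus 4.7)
The space $U^{2,\sharp}$ is the sum \eqref{u2sharp} of two atomic pieces, so by the atomic structure (and Cauchy--Schwarz in the atomic expansion) it suffices to bound $\sum_m \|\chi_k^m \phi\|_{U^{2,\sharp}}^2$ when $\phi$ is a single normalized atom at frequency $2^k$, of either type. Consider first a $U^2_\Delta$-atom of the form $\phi(t) = \chi_I(t) e^{it\Delta} f$ with $\|f\|_{L^2} = 1$. Introduce a smooth spatial partition of unity $1 = \sum_{m'} \eta_{m'}$ at scale $2^k$, with $\eta_{m'}$ supported near $2^k m'$, so $f = \sum_{m'} \eta_{m'} f$ and $\sum_{m'} \|\eta_{m'} f\|_{L^2}^2 \ls 1$. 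Since $f$ is frequency-localized at $2^k$ and $\chi_k^m$ enforces $|t| \ls 1$ via $\chi_0$, a wave-packet / stationary phase argument shows that $\chi_k^m e^{it\Delta}(\eta_{m'} f)$ is essentially a wave packet passing through the $2^k$-cube at $2^k m$ in the direction determined by the vector from $2^k m'$ to $2^k m$, with norm decaying rapidly in $|m-m'|$. Each such product fits into the lateral atomic component $|D|^{-\frac12} P_\e U^2_\e$ in \eqref{u2sharp} with $\e$ matched to this direction, with bound $\|\eta_{m'} f\|_{L^2}$ times a Schwartz factor in $|m-m'|$. Summing in $\ell^2_m$ and then in $m'$ yields \eqref{xksharp-l2xk}. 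The $|D|^{-\frac12} P_\e U^2_\e$-atoms are handled analogously, replacing the Schr\"odinger flow with the lateral flow $e^{-isL_\e}$, which propagates frequency-$2^k$ wave packets over a displacement of order $2^k$ across the support of $\chi_k^m$.

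\textbf{Part (b).} Write $v_m(t) := e^{-it\Delta}(\chi_k^m \phi(t))$. Since $\phi$ has compact time support (contained in $\{\chi_0 = 1\}$), one has $\phi = \sum_m \chi_k^m \phi$ and hence $e^{-it\Delta}\phi(t) = \sum_m v_m(t)$. Each $\chi_k^m \phi(t)$ is frequency-localized at $2^k$ and spatially localized at scale $2^k$ around $2^k m$; the wave-packet principle then gives that for $|t| \ls 1$ the backward-evolved $v_m(t)$ is localized in a ball of radius $O(2^k)$ around $2^k m$ modulo Schwartz tails, uniformly in $t$. Consequently $\{v_m(t_j) - v_m(t_{j-1})\}_m$ is almost orthogonal in $L^2$, which gives
\[
\|e^{-it_j\Delta}\phi(t_j) - e^{-it_{j-1}\Delta}\phi(t_{j-1})\|_{L^2}^2 \ls \sum_m \|v_m(t_j) - v_m(t_{j-1})\|_{L^2}^2
\]
for every partition $\{t_j\}$. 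Summing over $j$ and $m$ and taking the supremum in the partition yields $\|\phi\|_{V^2_\Delta}^2 \ls \sum_m \|\chi_k^m \phi\|_{V^2_\Delta}^2 \leq \|\phi\|_{l^2_k V^{2,\sharp}}^2$. The lateral contributions $\sup_\e \| |D|^{\frac12} P_\e \phi \|_{V^2_\e}^2$ are bounded by the same scheme, with $e^{-it\Delta}$ replaced by the lateral flow $e^{-isL_\e}$.

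\textbf{Main obstacle.} The delicate step is in Part (a): one must realize $\chi_k^m e^{it\Delta}(\eta_{m'} f)$ cleanly as an element of $U^{2,\sharp}$ with a norm bound that is Schwartz in $|m-m'|$. This is precisely what the lateral atomic ingredient $|D|^{-\frac12} \sum_\e P_\e U^2_\e$ in \eqref{u2sharp} is built to accommodate: the direction $\e$ is selected according to the displacement $m - m'$ for moderate $|m-m'|$, while for $|m-m'| \ls 1$ the $U^2_\Delta$ piece alone already suffices. In Part (b), the analogous point is that the wave-packet localization of $v_m(t)$ must be uniform across all partition points $t_j$ in the support of $\phi$, which is exactly guaranteed by the compact time support hypothesis forcing $|t| \ls 1$ throughout.
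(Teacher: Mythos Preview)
The paper does not supply a proof of this proposition --- the lemmas collected in \S\ref{sect:fs} under ``Linear estimates'' are explicitly tied to Propositions~\ref{p:u2sharp}, \ref{p:Xsigma} and Theorem~\ref{t:xkyk}, but none addresses Proposition~\ref{p:l2Xsharp} --- so there is no reference argument to compare against. I evaluate your proposal on its own merits.

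\medskip

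\textbf{Part (b) is essentially correct.} The almost-orthogonality of the family $v_m(t)=e^{-it\Delta}(\chi_k^m\phi(t))$ in $m$ does follow from the wave-packet principle: $\chi_k^m\phi(t)$ is spatially localized at scale $2^k$ near $2^k m$, and backward propagation by $e^{-it\Delta}$ for $|t|\lesssim 1$ shifts this by at most $O(2^k)$, so the localization survives with Schwartz tails uniformly in $t$. Your partition argument then gives the $V^2_\Delta$ bound, and the lateral $V^2_\e$ pieces are handled identically with $e^{-isL_\e}$ in place of $e^{-it\Delta}$.

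\medskip

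\textbf{Part (a) has a genuine gap.} Two issues. First, your reduction to a single step $\chi_I(t)e^{it\Delta}f$ of an atom loses the $\ell^2$-in-steps structure: for a full atom $a=\sum_j\chi_{I_j}e^{it\Delta}f_j$ you must control $\|\chi_k^m a\|_{U^{2,\sharp}}$ in a way that respects $\sum_j\|f_j\|^2=1$, and the triangle inequality over steps only gives an $\ell^1$ sum. Second --- and this is the main problem --- the assertion that $\chi_k^m e^{it\Delta}(\eta_{m'}f)$ ``fits into the lateral atomic component $|D|^{-1/2}P_\e U^2_\e$ with $\e$ matched to $m-m'$'' is unsubstantiated. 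A spatially truncated free Schr\"odinger wave has no evident lateral $U^2_\e$-atom structure; the lateral spaces are built from step functions in the $x\cdot\e$ variable evolved by $e^{isL_\e}$, and there is no mechanism by which a spatial cutoff converts a vertical free wave into such an object. What the conjugation $e^{-it\Delta}\chi(2^{-k}x-m)e^{it\Delta}=\chi(2^{-k}(x+2tD)-m)$ actually gives is a \emph{smoothly $t$-dependent} PDO applied to the pulled-back atom, and applying a smoothly varying operator to a step function yields only a $V^2$ bound, not $U^2$; since $V^2\not\subset U^2$, this is insufficient. A route that can be made to work is to freeze the PDO at a reference time $t_j^*\in I_j$ on each step, producing a genuine $U^2$-atom with data $M_m(t_j^*)f_j$ (and these are $\ell^2_m$-almost-orthogonal, giving the square summability), and then to control the remainder $\sum_j\chi_0\chi_{I_j}(M_m(t)-M_m(t_j^*))f_j$ separately. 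But establishing $\ell^2_m$-summability of this remainder in $U^2$ (rather than just $V^2$) still requires an argument you have not given.
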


\subsection{The angular spaces}

The above spaces suffice in order to treat the nonlinear part of
$N(\phi,A)$. However, for the linear part $L$ we need an entirely different
type of structure. To set the notation, we denote the angular derivative
centered at $x_0$ by
\[
\snabla_{x_0} = (x-x_0) \wedge \partial_x
\]
We also set
\[
\la x-x_0 \ra_k = (2^{-2k}+(x-x_0)^2)^\frac12
\]
 Let $\sigma > 0 $ be a fixed constant. For $x_0 \in \R^2$   and
$k \in \Z$ we define the space $X^{x_0,\sigma}_{k}$ with norm
\begin{equation}\label{Xsigma0}
\| \phi\|_{X^{x_0,\sigma}_{k}} = 2^{k(\frac12-\sigma)} \|\la x-x_0
\ra_k^{-\frac12 - \sigma} \la \snabla \ra^{\sigma} \phi\|_{L^2}
\end{equation}
as well as the smaller space $X^{x_0,\sigma,\sharp}_{k}$ with norm
\begin{equation}\label{Xsigma0-sharp}
\| \phi\|_{X^{x_0,\sigma,\sharp}_{k}}= \|\phi(0)\|_{L^2} +
 \|(i \partial_t - \Delta) \phi\|_{X^{x_0,\sigma,*}_{k}}
\end{equation}

We further define
\begin{equation}\label{Xsigma}
X^{\sigma}_{k} = \bigcap_{x_0 \in \R^2} X^{x_0,\sigma}_{k},
\quad X^{\sigma,*}_{k} = \sum_{x_0 \in \R^2} X^{x_0,\sigma,*}_{k},
\quad X^{\sigma,\sharp}_{k} = \sum_{x_0 \in \R^2} X^{x_0,\sigma,\sharp}_{k},
\end{equation}
where the first space is the dual of the second.

These spaces are used for frequency $2^k$ solutions to the
Schr\"odinger equation. Their main properties are stated in the following

\begin{prop}\label{p:Xsigma}
The spaces defined above, restricted to frequency $2^k$ functions,
have the following properties:

a) Solvability:
\begin{equation}\label{Xsigma-solve}
\| \phi\|_{X^{\sigma,\sharp}_{k}} \lesssim
\|\phi(0)\|_{L^2} +  \|(i \partial_t - \Delta) \phi\|_{X^{\sigma,*}_{k}}
\end{equation}
b) Moving centers:
\begin{equation}\label{Xsigma-emb}
\|  \phi\|_{X^{\sigma}_{k}}+ \| \phi\|_{L^\infty L^2}  \lesssim \| \phi\|_{X^{\sigma,\sharp}_{k}}
\end{equation}
c) Nesting:
\begin{equation}\label{Xsigma-nest}
\|  \phi\|_{X^{\sigma_1}_{k}}  \lesssim \| \phi\|_{X^{\sigma_2}_{k}}, \qquad \sigma_2 < \sigma_1
\end{equation}
\end{prop}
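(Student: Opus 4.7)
The plan is that parts (a) and (c) are essentially formal consequences of the definitions, while part (b) carries the analytic content: it reduces to a translation-invariant, angular-weighted local smoothing estimate for the free Schr\"odinger flow, combined with standard $TT^*$ and energy machinery.

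For (a), the sum-space definition of $X^{\sigma,\sharp}_k$ together with the intrinsic definition $\|\phi\|_{X^{x_0,\sigma,\sharp}_k} = \|\phi(0)\|_{L^2}+\|(i\partial_t-\Delta)\phi\|_{X^{x_0,\sigma,*}_k}$ reduces the bound to a Duhamel decomposition. Given $\phi$ with $\phi(0)\in L^2$ and $F := (i\partial_t-\Delta)\phi \in X^{\sigma,*}_k$, pick an almost-optimal splitting $F = \sum_{x_0} f_{x_0}$ with $\sum\|f_{x_0}\|_{X^{x_0,\sigma,*}_k} \le (1+\eps)\|F\|_{X^{\sigma,*}_k}$, and write $\phi = e^{it\Delta}\phi(0) + \sum_{x_0}\phi_{x_0}$ where $\phi_{x_0}$ is the Duhamel integral of $f_{x_0}$ with vanishing initial data. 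Each $\phi_{x_0}$ lies in $X^{x_0,\sigma,\sharp}_k$ by construction, and the free piece can be assigned to a single summand (say $x_0=0$) with norm $\|\phi(0)\|_{L^2}$. For (c), on functions localized at frequency $2^k$ one has the operator bound $\la\snabla\ra \lesssim 2^k\la x-x_0\ra_k$ (since at radius $r$ around $x_0$ the admissible angular frequencies are $\lesssim 2^kr$); hence $\la\snabla\ra^{\sigma_1-\sigma_2}\lesssim (2^k\la x-x_0\ra_k)^{\sigma_1-\sigma_2}$, and inserting this into the $L^2$ integrand defining $\|\phi\|_{X^{x_0,\sigma_1}_k}$ converts the weight $\la x-x_0\ra_k^{-\frac12-\sigma_1}$ into $\la x-x_0\ra_k^{-\frac12-\sigma_2}$ and the prefactor $2^{k(\frac12-\sigma_1)}$ into $2^{k(\frac12-\sigma_2)}$, recovering the $X^{x_0,\sigma_2}_k$ norm; passing to the intersection $X^{\sigma_i}_k$ is then immediate.

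For (b), the sum-space structure and translation invariance reduce the claim to a single summand centered at $x_0^* = 0$, so $\phi = e^{it\Delta}\phi(0) + \int_0^t e^{i(t-s)\Delta}f(s)\,ds$ with $f\in X^{0,\sigma,*}_k$, and the task is to bound $\|\phi\|_{X^{x_0,\sigma}_k} + \|\phi\|_{L^\infty L^2}$ by $\|\phi(0)\|_{L^2}+\|f\|_{X^{0,\sigma,*}_k}$ \emph{uniformly in} $x_0$. The core input is the translation-invariant angular local smoothing estimate
\[
\|e^{it\Delta}\psi\|_{X^{x_0,\sigma}_k} \lesssim \|\psi\|_{L^2}, \qquad \text{uniformly in } x_0,
\]
for frequency-$2^k$ data $\psi$, proved by working in polar coordinates around $x_0$, applying \eqref{est:localsmoothing} radially, and handling $\la\snabla\ra^\sigma$ via a Bernstein-type bound using the angular frequency cutoff $n\lesssim 2^kr$. $TT^*$ then gives the dual bound $\|\int e^{-it\Delta}g\|_{L^2}\lesssim \|g\|_{X^{x_0,\sigma,*}_k}$, and combining the estimates at centers $0$ and $x_0$ yields the untruncated pairing
\[
\Big|\iint \la e^{i(t-s)\Delta}f(s),g(t)\ra\,dt\,ds\Big|\lesssim \|f\|_{X^{0,\sigma,*}_k}\|g\|_{X^{x_0,\sigma,*}_k}.
\]
The Christ--Kiselev lemma (applicable since the relevant time exponents are off the endpoint) recovers the causal forward Duhamel. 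Finally, the $L^\infty L^2$ bound follows from the energy identity $\|\phi(t)\|_{L^2}^2 = \|\phi(0)\|_{L^2}^2 - 2\Im\int_0^t\la f,\phi\ra\,dx\,ds$ using the duality $|\la f,\phi\ra|\le \|f\|_{X^{0,\sigma,*}_k}\|\phi\|_{X^{0,\sigma}_k}$ and absorbing via the just-established $X^{0,\sigma}_k$ bound.

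The main obstacle will be the translation-invariant angular local smoothing estimate: the classical local smoothing bound is naturally centered at the origin of the weight, so one must quantitatively verify that the angular regularity $\la\snabla_{x_0}\ra^\sigma$ of a free Schr\"odinger solution at \emph{any} center $x_0$ is controlled uniformly by the $L^2$ data. This requires a careful polar decomposition and a careful comparison of the angular, radial, and frequency scales, and is really the only point at which the angular structure of the spaces $X^{x_0,\sigma}_k$ is used in an essential way.
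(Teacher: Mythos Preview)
Your treatment of parts (a) and (c) is fine and matches the paper's.

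For part (b), there is a genuine gap in your scheme. The spaces $X^{x_0,\sigma}_k$ and $X^{x_0,\sigma,*}_k$ are both $L^2$ in time, so in your $TT^*$ step the retarded Duhamel operator maps $L^2_t \to L^2_t$. This is exactly the endpoint at which the Christ--Kiselev lemma fails; your assertion that ``the relevant time exponents are off the endpoint'' is incorrect. Without Christ--Kiselev you do not recover the causal bound
\[
\Big\| \int_0^t e^{i(t-s)\Delta} f(s)\,ds \Big\|_{X^{x_0,\sigma}_k} \lesssim \|f\|_{X^{0,\sigma,*}_k}
\]
from the untruncated one, and the homogeneous estimate alone does not give this. (Also, \eqref{est:localsmoothing} is the \emph{lateral} smoothing estimate, not a radial one; what you want here is the standard local energy decay bound.)

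The paper avoids this obstruction by splitting (b) into two qualitatively different estimates. First, for a \emph{fixed} center it proves the full inhomogeneous bound
\[
\|u\|_{LE} + \|\la r\ra^{-\frac12-\sigma}\la\snabla\ra^\sigma u\|_{L^2} \lesssim \|u(0)\|_{L^2} + \|\la r\ra^{\frac12+\sigma}\la\snabla\ra^{-\sigma} f\|_{L^2}
\]
by starting from the standard inhomogeneous local energy estimate (proved by a positive-commutator argument, so no Christ--Kiselev is needed) and then upgrading mode-by-mode in the angular Fourier series, using that a frequency-one function with angular mode $k$ is small in $\{r \ll |k|\}$. Second, it moves centers by a separate argument (Lemma~\ref{Xsigma0-shift}): one decomposes space into the regions $\la x-x_0\ra \ll R$, $\approx R$, $\gg R$ with $R=\la x_0\ra$, controls the inner and intermediate regions via local energy and the fixed-center bound recentered at $x_0$, and handles the outer region by complex interpolation using $\snabla_{x_0} - \snabla = x_0\cdot\nabla$. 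Neither step uses Christ--Kiselev. If you want to repair your approach, the cleanest fix is to replace the $TT^*$/Christ--Kiselev step by a direct inhomogeneous local energy input and then prove a center-shifting lemma of this type.
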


In this result $k$ is a scaling parameter and can be set to $0$.
Part (a) is a straightforward consequence of the definitions.
However, part (b) is far less trivial, and requires two
separate estimates. First, for fixed $x_0$ we need to show that
\begin{equation}\label{Xsigma-solve-0}
\| \phi\|_{X^{x_0,\sigma}_{k}}+ \| \phi\|_{L^\infty L^2}  \lesssim \| \phi\|_{X^{x_0,\sigma,\sharp}_{k}}
\end{equation}
which is done in Lemma~\ref{Xsigma0-solve}.

Secondly, for $x_1 \neq x_0$ we need to show that
\begin{equation}\label{Xsigma-emb-0}
\| \phi\|_{X^{x_1,\sigma}_{k}} \lesssim \| \phi\|_{X^{x_0,\sigma}_{k}}
+ \| \phi\|_{X^{x_0,\sigma,\sharp}_{k}}
\end{equation}
This is achieved in Lemma~\ref{Xsigma0-shift}.
Part (c) follows from the similar property for fixed $x_0$, which is
straightforward in view of the frequency localization.

\subsection{Dyadic norms and the main function spaces.}

 Finally, we are ready to set up the global function spaces
where we solve the Chern-Simons-Schr\"odinger problem.
For the solutions at frequency $2^k$ we use two spaces.
The stronger norm $X_k^\sharp$ represents the space where
the solutions actually lie and is given by
\begin{equation}\label{Xksharp}
 X_k^\sharp = l^2_k U^{2,\sharp}_k + X^{\sigma,\sharp}_{k}
\end{equation}
Here $0<\sigma<\frac{1}{2}$ is a fixed constant.

However, this is a sum type space and so multilinear estimates would
be quite cumbersome, with many cases. Furthermore, the above space
is not stable with respect to time truncations. Instead we also
introduce a weaker topology
\begin{equation}\label{Xk}
 X_k = l^2_k V^{2,\sharp}_k \cap X^{\sigma}_{k}
\end{equation}
For the inhomogeneous term in the equation we have
the space $Y_k$ which has $X_k$ as its dual,
\begin{equation}\label{Yk}
Y_k = l^2_k DU^{2,\sharp}_k + X^{\sigma,*}_{k}
\end{equation}

The main result concerning our function spaces is in the following

\begin{theorem}\label{t:xkyk}
The following properties are valid for frequency $2^k$ functions:

a) Linear estimate:
\begin{equation}\label{Xk-solve}
\| \phi\|_{ X_k^\sharp} \lesssim \|\phi(0)\|_{L^2} + \| (i \partial_t - \Delta) \phi\|_{Y_k}
\end{equation}

b) Weaker norm:
\begin{equation}\label{Xk-emb}
\| \phi\|_{ X_k} \lesssim \| \phi\|_{ X_k^\sharp}
\end{equation}

c) Time truncation:
\begin{equation}\label{IXk}
\| \chi_I \phi\|_{ X_k} \lesssim \| \phi\|_{ X_k^\sharp}, \qquad I \subset \R
\end{equation}

d) Duality:
\begin{equation}\label{Xk-dual}
Y_k^* = X_k
\end{equation}
\end{theorem}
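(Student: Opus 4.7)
The plan is to derive parts (a)--(d) of Theorem~\ref{t:xkyk} in that order, using Proposition~\ref{p:u2sharp} for the $U^{2,\sharp}/V^{2,\sharp}$ layer, Proposition~\ref{p:Xsigma} for the weighted angular layer, and abstract sum/intersection duality to glue everything together. The overarching observation is that $X_k^\sharp$, $Y_k$ are atomic (sum) type spaces while $X_k$ is an intersection type space, so the genuine work lies in establishing cross-embeddings between the two structural layers; once these are in place, (a)--(c) follow from component-wise arguments and (d) is formal.

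For part (a), I would split the forcing $(i\partial_t - \Delta)\phi = f$ as $f = f_1 + f_2$ with $f_1 \in l^2_k DU^{2,\sharp}_k$ and $f_2 \in X^{\sigma,*}_k$, essentially realizing an infimum in the norm. Solve $(i\partial_t - \Delta)\psi_1 = f_1$ with $\psi_1(0)=\phi(0)$ through the Duhamel formula: the homogeneous piece $e^{it\Delta}\phi(0)$ is in $U^2_\Delta \subset U^{2,\sharp}$, while the inhomogeneous piece lies in $U^{2,\sharp}$ by the duality \eqref{u2sharp-dual}, and the spatial partition of unity $\chi_k^m$ yields the $l^2_k$ structure since waves at frequency $2^k$ travel distances $\sim 2^k$ on unit time intervals. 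Then solve $(i\partial_t - \Delta)\psi_2 = f_2$ with $\psi_2(0)=0$ using \eqref{Xsigma-solve} from Proposition~\ref{p:Xsigma}. Summing gives $\phi = \psi_1 + \psi_2 \in X_k^\sharp$.

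For parts (b) and (c), since the target $X_k = l^2_k V^{2,\sharp}_k \cap X^\sigma_k$ is an intersection, I must control both components separately for each atom in $X_k^\sharp$, and because truncation $\chi_I$ only maps $U^{2,\sharp} \to V^{2,\sharp}$ (by Proposition~\ref{p:u2sharp}(c)), (b) and (c) have essentially the same content. For the first atom class $\phi_1 \in l^2_k U^{2,\sharp}_k$: the $l^2_k V^{2,\sharp}$ bound is immediate from \eqref{u2sharp-emb} and the truncation bound \eqref{Iu2sharp-emb}; the $X^\sigma_k$ bound requires the Strichartz--local-smoothing enhancements \eqref{est:u2localsmoothing}--\eqref{est:UpStrichartz} and is obtained by testing the definition \eqref{Xsigma0} against these inequalities using $\chi_I e^{it\Delta}$ as the basic atom. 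For the second atom class $\phi_2 \in X^{\sigma,\sharp}_k$: the $X^\sigma_k$ bound is \eqref{Xsigma-emb} of Proposition~\ref{p:Xsigma}, whereas the $l^2_k V^{2,\sharp}_k$ bound is the main obstacle and requires constructing a $V^{2,\sharp}$ approximation on each dyadic square out of the weighted information encoded in the $X^{\sigma,*}_k$ forcing; here I would dualize and use that on any time interval the local smoothing estimate recovers the $l^2_k$ spatial square-sum from the weighted $L^2$ norms by summing over $x_0 \in 2^k \Z^2$.

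For part (d), the duality follows from the abstract identities $(A+B)^* = A^* \cap B^*$ applied to $Y_k = l^2_k DU^{2,\sharp}_k + X^{\sigma,*}_k$, together with $(l^2_k DU^{2,\sharp}_k)^* = l^2_k V^{2,\sharp}_k$, which is itself the combination of the componentwise duality \eqref{up-dual} (or rather its $\sharp$-variant \eqref{u2sharp-dual}) with standard $\ell^2(\Z^2)$-valued duality for the spatial partition, and $(X^{\sigma,*}_k)^* = X^\sigma_k$ directly from \eqref{Xsigma}. The main obstacle in the whole proof is the embedding $X^{\sigma,\sharp}_k \hookrightarrow l^2_k V^{2,\sharp}_k$ referenced above, since this is the only step that genuinely mixes the angular/weighted structure with the space-time atomic structure; I expect to handle it by localizing in time, decomposing the $X^{\sigma,*}_k$ forcing into atoms supported near a single center $x_0$ and then proving an $l^2_k V^{2,\sharp}_k$ bound for the associated Duhamel solution by summing over translates using the rapid decay of the Schr\"odinger propagator outside the light-cone-like region $|x-x_0| \lesssim 2^k$.
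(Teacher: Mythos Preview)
Your architectural outline is right: (a)--(d) do reduce to the two cross-embeddings
\[
l^2_k U^{2,\sharp}_k \subset X^\sigma_k
\qquad\text{and}\qquad
X^{\sigma,\sharp}_k \subset l^2_k V^{2,\sharp}_k,
\]
and the rest is formal. However, you are missing the main shortcut and underestimating the genuine content of the first embedding.

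\textbf{The duality shortcut.} You treat the second cross-embedding as an independent ``main obstacle'' to be attacked by propagator-decay estimates. In the paper this embedding is not proved directly at all: it is obtained \emph{by duality} from the first. Concretely, once the truncation $\chi_I$ is removed (which one can do by extending $\phi\in X^{\sigma,\sharp}_k$ as a free wave outside $I$, using $X^{\sigma,\sharp}_k\subset L^\infty L^2$, so that $\chi_I\phi-\phi\in U^2_\Delta$), the inclusion $X^{\sigma,\sharp}_k\subset l^2_k V^{2,\sharp}_k$ reduces to $X^{\sigma,*}_k\subset l^2_k DV^{2,\sharp}_k$, which is exactly the dual of $l^2_k U^{2,\sharp}_k\subset X^\sigma_k$ via \eqref{u2sharp-dual} and the definition \eqref{Xsigma}. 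So only one cross-embedding needs direct work. Your proposed direct argument via ``rapid decay of the Schr\"odinger propagator outside $|x-x_0|\lesssim 2^k$'' would at best recover the $l^2_k$ spatial summability; it gives no obvious mechanism for the $V^{2,\sharp}$ control, and in any case it is unnecessary.

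\textbf{The first cross-embedding.} Your sketch here (``test \eqref{Xsigma0} against Strichartz/local-smoothing on $\chi_I e^{it\Delta}$ atoms'') is too optimistic. The $X^\sigma_k$ norm carries the angular weight $\langle\snabla\rangle^\sigma$, and neither Strichartz nor lateral smoothing produces angular regularity. The paper's route (Lemma~\ref{U2-Xsigma}, resting on Lemma~\ref{Xsigma0-solve}) has two ingredients you do not mention: first, an angular Fourier mode decomposition combined with the observation that frequency localization at scale $2^k$ suppresses the $k$th angular mode inside $r\lesssim |k|2^{-k}$ (the bound \eqref{rad-proj}), which is what converts local energy decay into control of $\langle r\rangle_k^{-\frac12-\sigma}\langle\snabla\rangle^\sigma u$; second, for the lateral $U^2_\e$ atoms, a nontrivial almost-orthogonality statement equivalent to
\[
\Bigl\|\sum_j\chi_ju_j\Bigr\|_{H^\sigma(\mathbb S^1)}^2\lesssim\sum_j\|u_j\|_{H^\sigma(\mathbb S^1)}^2,\qquad 0\le\sigma<\tfrac12,
\]
for disjointly supported $u_j$. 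Without these two pieces the embedding \eqref{u2toXs} does not close.
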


Part (a) is a direct consequence of the preceding three propositions.
The estimate in (b) is a special case of (c). Part (c)
also follows in part from the two preceding propositions.
However, we still need to address the cross embeddings,
\[
\chi_I l_k^2 U^{2,\sharp}_k \subset  X^{\sigma}_{k}
\]
respectively
\[
 \chi_I X^{\sigma,\sharp}_{k} \subset l^2_kV^{2,\sharp}_k
\]
The truncation in the first embedding can be harmlessly dropped as it
is bounded on $X^{\sigma}_{k}$. It remains to show  the embedding
\begin{equation}\label{u2toXs}
l^2_k U^{2,\sharp}_k
\subset X^{\sigma}_{k}
\end{equation}
which is proved in Lemma~\ref{U2-Xsigma}. The truncation in the second
embedding can also be dropped. To see this recall that $X^{\sigma,\sharp}_{k}
\subset L^\infty L^2$. This allows us to freely replace arbitrary functions
$u \in  X^{\sigma,\sharp}_{k} $
 by solutions to the homogeneous equation outside $I$. But then
$\chi_I u - u \in U^2_\Delta$ and we can use \eqref{u2sharp-emb}.  Once $\chi_I$ is dropped,
using again $X^{\sigma,\sharp}_{k}
\subset L^\infty L^2$, the problem reduces to
\[
X^{\sigma,*}_{k} \subset l^2_k DV^{2,\sharp}_k
\]
which follows from \eqref{u2toXs} by duality.

In this article we work with $H^s$ initial data. Correspondingly, we define
the spaces $X^{s,\sharp}$ and $X^s$ for solutions, respectively $Y^s$ for the nonlinearity,
by
\begin{equation}
 \| \phi\|_{ X^{s,\sharp}}^2 = \sum_{k \geq 0} 2^{2sk} \| P_k \phi\|_{ X^{\sharp}_k}^2
\end{equation}
\begin{equation}
  \| \phi\|_{ X^{s}}^2 = \sum_{k \geq 0} 2^{2sk} \| P_k \phi\|_{  X_k}^2
\end{equation}
\begin{equation}
 \| f \|_{ Y^{s}}^2 = \sum_{k\geq 0} 2^{2sk} \| P_k f\|_{  Y_k}^2
\end{equation}
where $P_0$ includes all frequencies less than $1$.

\subsection{The nonlinearity $N(\phi,A)$.}
Here we turn our attention to the nonlinear equation
\[
(i \partial_t - \Delta)\phi = N(\phi,A), \qquad \phi(0) = \phi_0,
\qquad A=A(\phi),
\]
where $A(\phi)$ is obtained by solving \eqref{At}-\eqref{A1A2Initial}.
We seek to solve this equation for positive $t$ and locally in time;
therefore we can harmlessly insert a cutoff function $\chi =
\chi_{[0,1]}$ in time and solve instead the modified equation
\begin{equation}\label{CSS-cut}
  (i \partial_t - \Delta)\phi = N(\chi \phi,A), \qquad \phi(0) = \phi_0,
\qquad A=A(\chi \phi)
\end{equation}
Any global solution to this modified equation will solve the original
equation in the time interval $[0,1]$.

We use the $H^s$ version of the linear estimate \eqref{Xk-solve} to
solve this equation in the space $X^{s,\sharp}$ using the contraction
principle. Thus we need to show that we have a small Lipschitz
constant for the map
\[
X^{s,\sharp} \ni \phi \to N(\chi \phi,A) \in Y^s, \qquad A=A(\chi
\phi)
\]
We subdivide this problem into two completely different problems,
which correspond to the decomposition of $N(\chi \phi,A) $ into a
linear and a nonlinear part.  To estimate the linear part $L (\chi
\phi)$ we will use the $H^s$ version of the embedding \eqref{IXk} and
select only the $X^{\sigma,s}$ part of the $X^s$ norm, neglecting the
$l^2$ structure. Then we can drop the cutoff $\chi$, and it remains to
prove the bound
\begin{equation}\label{lin-est}
  \| L \phi\|_{X^{\sigma,s,*}} \lesssim \|\phi\|_{ X^{\sigma,s}} \| \phi(0)\|_{H^s}^2
\end{equation}
This is achieved in Section~\ref{s:lin}, Proposition \ref{Cphi}.

To estimate the nonlinear part $Nl( \chi \phi,A) = N(\chi \phi,A) - L
\chi \psi$ we seek to prove %that the map
\[
X^{s,\sharp} \ni \phi \to Nl(\chi \phi,A) \in Y^s, \qquad A=A(\chi
\phi)
\]
Retaining only the $U^2$- $V^2$ part of our function spaces, it
suffices consider the map
\[
l^2 V^{2,\sharp,s} \ni \phi \to Nl(\phi,A) \in l^2 DU^{2,\sharp,s},
\qquad A=A( \phi)
\]
for $\phi$ localized in time. By duality, this translates to
Lipschitz continuity of the form
\begin{equation}\label{Nphipsi}
  l^2 V^{2,\sharp,s} \times l^2 V^{2,\sharp,{-s}} \ni (\phi,\psi) \to
  \int  Nl(\phi,A) \bar \psi \ dxdt
\end{equation}
To prove this we succesively consider all the terms in $Nl( \phi,A) $
in Sections~\ref{s:cubic}-\ref{s:last}.

\subsection{Linear estimates}
We now proceed to state and prove a collection of linear lemmas which,
together, imply the results stated before in this section.

Given an angle $A$ in $\R^2$ with opening less than $\pi$, we say that
a direction $\e$ is admissible with respect to $A$ if $\pm \e^\perp
\not\in A$.

For $ k \geq 0$ we define the following subset of Fourier space
\[
A_k = \{ (\xi,\tau) \in A \times \R;\ |\xi| \sim 2^k, \ |\tau -
\xi^2| \ll 2^{2k}\}
\]
We denote by $P_{A,k}$ a smooth space-time multiplier with support in $A_k$.
Then we have

\begin{lem}\label{u2sharp-v2}
  Let $A$ be an angle in $\R^2$, $k > 0$, $I$ a time interval and
  $\e_1$, $\e_2$ admissible directions with respect to $A$.
 Then for functions $f$ that are frequency localized in $A_k$, we have
  \begin{equation}
    \|P_{A,k}  \chi_I f\|_{V^2_{ \e_2}} \lesssim \| f\|_{U^2_{ \e_1}}
  \end{equation}
  with an implicit constant that is uniform with respect to pairs
  $\e_1,\e_2$ for which the distances $\text{dist}(\pm \e_{1,2}, A)$
  lie in a compact set away from zero.
\end{lem}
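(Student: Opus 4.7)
The approach is atomic decomposition in $U^2_{\e_1}$, followed by a Fourier-side transfer of the lateral structure from $\e_1$ to $\e_2$ that exploits the frequency localization to $A_k$.

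By the atomic definition of $U^2_{\e_1}$ (Definition~\ref{def:u}), it suffices to prove the bound for a single normalized atom
\[
f = \sum_j \chi_{[x^{j-1},x^j)}(x_{\e_1})\,u_{j-1},
\]
where each $u_{j-1}$ is a lateral $\e_1$-wave, i.e.~a solution of $(\partial_{\e_1}-iL_{\e_1})u_{j-1}=0$, normalized so that $\sum_j\|u_{j-1}(x^{j-1})\|_{L^2(H_{\e_1}\times\R)}^2\leq 1$. The key geometric observation is that on $A_k$ the characteristic sets of both $\partial_{\e_1}-iL_{\e_1}$ and $\partial_{\e_2}-iL_{\e_2}$ coincide with the paraboloid $\tau=|\xi|^2$, which (restricted to $A_k$) is a smooth graph over either $\xi_{\e_1}$ or $\xi_{\e_2}$ with Jacobians of size $\sim 2^k$ and uniformly bounded ratio. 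The admissibility of $\e_1,\e_2$ relative to $A$ ensures these bounds hold uniformly over the allowed pairs. This yields a norm equivalence $\|g\|_{U^2_{\e_1}}\sim\|g\|_{U^2_{\e_2}}$ for $g$ frequency-supported in $A_k$, obtained by reparametrizing the atomic decomposition on the Fourier side. Hence $\|f\|_{U^2_{\e_2}}\lesssim 1$.

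It remains to show that $P_{A,k}\chi_I$ maps $A_k$-supported $U^2_{\e_2}$ functions into $V^2_{\e_2}$. Since $P_{A,k}$ is a Fourier multiplier it commutes with $L_{\e_2}$, hence is bounded on $V^2_{\e_2}$. The principal technical obstacle is that the sharp time cutoff $\chi_I$ does not commute with $L_{\e_2}$; I handle this by convolving $\chi_I$ against the $t$-profile of $P_{A,k}$ (a Schwartz function at scale $2^{-2k}$), which produces a smoothed cutoff whose action on $V^2_{\e_2}$ is controlled directly, with Schwartz-tail corrections absorbable into the estimate. The endpoint jumps at $\partial I$ contribute at most an $L^2(H_{\e_2}\times\R)$ term bounded via the embedding $U^2_{\e_2}\hookrightarrow L^\infty_{x_{\e_2}}L^2(H_{\e_2}\times\R)$. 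Combining with $U^2_{\e_2}\hookrightarrow V^2_{\e_2}$ closes the argument.
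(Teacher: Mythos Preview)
Your proposal has a genuine gap at the central step: the claimed norm equivalence $\|g\|_{U^2_{\e_1}}\sim\|g\|_{U^2_{\e_2}}$ for $A_k$-supported functions is not justified by ``reparametrizing the atomic decomposition on the Fourier side.'' The $U^2_\e$ atomic structure is defined in \emph{physical} space via step functions in the $x_\e$ variable. While each individual step of a $U^2_{\e_1}$ atom is a free wave and hence also a lateral $\e_2$-wave on $A_k$, the atom as a whole is not Fourier-supported on the paraboloid: multiplication by $\chi_{[x^{j-1},x^j)}(x_{\e_1})$ smears the support in $\xi_{\e_1}$. A sharp cut along a hyperplane orthogonal to $\e_1$ has no step-function structure in the $x_{\e_2}$ variable when $\e_1\neq\e_2$, so there is no obvious way to reassemble the pieces into $U^2_{\e_2}$ atoms with controlled coefficients. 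The paper explicitly flags the embedding $U^2_{\e_1}\subset V^2_{\e_2}$ as one of the two nontrivial ingredients coupled in the lemma; your argument assumes it away.

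The paper's route is quite different. It does not try to convert atoms between directions. Instead, after establishing the energy and lateral energy bounds $\|Pf\|_{L^{\infty,2}_{\e_2}}+\|Pf\|_{L^\infty L^2}\lesssim\|f\|_{U^2_{\e_1}}$, it reduces via duality $(DV^2_{\e_2})^*=U^2_{\e_2}$ to a bilinear bound $|Q_R(f,g)|\lesssim\|f\|_{U^2_{\e_1}}\|g\|_{U^2_{\e_2}}$, where $Q_R$ involves $(i\partial_t-\Delta)\chi_I$. It introduces a twin form $Q_L$ and controls $Q_R-Q_L$ via the commutator $[\partial_t,\chi_I]$, then evaluates both forms on atoms. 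The heart of the proof is a case split on the relative lengths of the lateral intervals $[a_i,b_i]$ (from the $\e_1$ atom) and $[c_j,d_j]$ (from the $\e_2$ atom), using propagation estimates for $A_k$-localized waves to transfer $L^2$ mass between lateral slabs in the unbalanced case, and a dyadic-scale almost-orthogonality argument in the balanced case. The time truncation is handled simultaneously, not as a separate step. Your smoothing argument for $\chi_I$ also does not clearly work: even a smooth time cutoff fails to commute with $L_{\e_2}$, and you have not shown that a smooth-in-$t$ multiplier acts boundedly on $V^2_{\e_2}$, whose variation is measured in $x_{\e_2}$.
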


This lemma serves to prove the properties \eqref{u2sharp-emb} and
\eqref{Iu2sharp-emb} in Proposition~\ref{p:u2sharp}.  We note that two
nontrivial properties are coupled in the statement, namely the
embedding $U^2_{\e_1} \subset V^2_{ \e_2}$ and the time truncation.  We further
note that the same estimate holds true if either of the two lateral
spaces is replaced by the corresponding vertical space. In that case
the time truncation can be absorbed into the vertical space and one is
left with just the embedding, for which the proof below still applies.

\begin{proof}
  By scaling we can assume that $k=0$. We consider the multiplier
  $P$ which selects a small neighborhood of $A_0$. Then $P$ is bounded
  on both $V^2_{\e_2}$ and $U^2_{\e_1}$, and so it
  suffices to show that
  \begin{equation}
    \| P \chi_I P f\|_{V^2_{\e_2}} \lesssim \| f\|_{U^2_{\e_1}}
  \label{Pest}\end{equation}
Set $I = [t_0,t_1]$.  We can harmlessly replace $\chi_{I}$ by its
mollified version $Q_{\ll 0} \chi_I$ as its high modulation part
provides no output.

We first observe that the simpler bound
  \begin{equation}\label{u2-energy}
  \| P f\|_{L^{\infty,2}_{\e_2}}+ \| P f\|_{L^{\infty} L^2}
 \lesssim \| f\|_{U^2_{ \e_1}}
  \end{equation}
  follows easily by reducing to a $U^2_{ \e_1}$ atom
where the free waves associated to each step are supported
in a small neighborhood of the intersection of $A_0$ with the
paraboloid.  Then we
  apply  either the energy estimate or the lateral energy
  estimate in the $\e_2$ direction for each step of that atom.

  Then the bound \eqref{Pest} reduces to
  \[
  \| (i \partial_t - \Delta) P \chi_I P f\|_{DV^2_{ \e_2}} \lesssim \|
  f\|_{U^2_{ \e_1}}
  \]
  Using the duality between $DV^2$ and $U^2$, this is equivalent to
  the symmetric bound
  \begin{equation}
    |Q_R(f,g)| \lesssim \| f\|_{U^2_{ \e_1}}\| g\|_{U^2_{ \e_2}}
    \label{Pdual}\end{equation}
  where
\[
  Q_R(f,g) = \langle (i \partial_t - \Delta) \chi_I P f, P g \rangle =
\langle  P f, \chi_I  (i \partial_t - \Delta) P g \rangle
\]
This is not entirely symmetric, and so we also introduce its twin
\[
Q_L(f,g) = \langle \chi_I (i \partial_t - \Delta)  P f, P g \rangle
\]
Their difference is easy to control. Indeed, we have
\[
Q_R(f,g) - Q_L(f,g) = \langle [(i \partial_t - \Delta), \chi_I] P f, P g
\rangle =  \langle i\partial_t \chi_I P f, P g
\rangle
\]
The time derivative of $\chi_I$ is a sum of two unit bump functions on
a unit time interval around $t_0$, respectively $t_1$.  Hence using
the energy part of \eqref{u2-energy} we obtain
\begin{equation}\label{ql-r}
|Q_R(f,g) - Q_L(f,g)| \lesssim \| f\|_{U^2_{ \e_1}}\| g\|_{U^2_{ \e_2}}
\end{equation}
Given the support of $P$, we can rewrite $Q_L$ and $Q_R$  in terms of the
sideways evolutions for $f$ and $g$:
  \begin{equation}
\begin{split}
    Q_L(f,g) = & \  \langle \chi_I P  (D_{\e_1} - L_{\e_1})f, P g \rangle,
\\
   Q_R(f,g) = & \ \langle P f,  \chi_I P (D_{\e_1} - L_{\e_1}) g \rangle
\end{split}    \label{qlr}\end{equation}
Here the elliptic factor in the factorization of $i \partial_t - \Delta$  is included in $P$.
Thus by a slight abuse of notation we use the same $P$ for different
multipliers with similar size and support.

  It suffices to prove \eqref{Pdual} for atoms. Thus consider $f$ and $g$ of
  the form
  \[
  f = \sum \chi_{[a_i,b_i]} (x\cdot \e_1) f_i, \qquad g = \sum
  \eta_{[c_i,d_i]} (x\cdot \e_2) g_i
  \]
  where $f_i$ and $g_i$ are homogeneous waves, frequency localized in
  a small neighborhood of $A_1$, and with
  \[
  \sum_i \|f_i(0)\|_{L^2}^2 \approx 1, \qquad \sum_i \|g_i(0)\|_{L^2}^2 \approx 1
  \]
  As $f_i$ and $g_i$ are free waves frequency localized near the $A$
  section on the parabola at frequency one, we can measure their
  energy in an equivalent way at time $t=0$.

  Instead of the data at time $t=0$, it is better to describe $f_i$ in
  terms of its values at $x\cdot \e_1 = a_i$ and at $x\cdot \e_1 =
  b_i$.  By a slight abuse of notation we denote these two functions
  by $f_i(a_i)$ and $f_i(b_i)$.  We remark that $f_i(a_i)$ and
  $f_i(b_i)$ are related via the sideways evolution and in particular
  we have
  \[
  \|f_i(a_i)\|_{L^2} = \|f_i(b_i)\|_{L^2}
  \]
  However, it will be convenient to work with both of them together
  rather than separately.

  We observe that it suffices to consider the case when $b_i - a_i \gg
  1$ and $c_i - d_i \gg 1$. Indeed, if for instance $b_i - a_i
  \lesssim 1$ for all $i$ then
  \[
  \|f\|_{L^2} \lesssim 1
  \]
  This is easily combined with the following easy consequence of \eqref{uvbesov},
  \[
  \|(i \partial_t - \Delta)P g\|_{L^2} \lesssim 1,
  \]
  to conclude the argument.

  We can also assume without any restriction in generality that
  $a_{i+1}-b_i \gg 1$ and $c_{i+1}-d_i \gg 1$.  To the intervals
  $[a_i,b_i]$ we associate bump functions $\chi_i$ which equal $1$
  inside the interval and decay rapidly on the unit scale.  By
  $\eta_i$ we denote similar bump functions associated to $[c_i,d_i]$.
Set
\[
\begin{split}
 B^{ij}_{L}:= & \ Q_L( \chi_{[a_i,b_i]} (x\cdot \e_1) f_i, \eta_{[c_j,d_j]} (x\cdot \e_2) g_j) \\
B^{ij}_{R}:= & \ Q_R( \chi_{[a_i,b_i]} (x\cdot \e_1) f_i, \eta_{[c_j,d_j]} (x\cdot \e_2) g_j)
\end{split}
\]
We want to be able  to use $Q_L$ and $Q_R$ interchangeably. For that
we estimate the difference
\[
 B^{ij}_{L} - B^{ij}_{R} =  \langle i\partial_t \chi_I P\chi_{[a_i,b_i]} (x\cdot \e_1) f_i , P  \eta_{[c_j,d_j]} (x\cdot \e_2) g_j
\rangle
\]
Using the time localization given by $\partial_t \chi_I$ and the finite
speed of propagation in time for waves supported in $A_0$, we
obtain a localized analogue of \eqref{ql-r}, namely
\[
| B^{ij}_{L} - B^{ij}_{R}| \lesssim \| \chi_i \eta_j f_i(t_0)\|_{L^2}
\| \chi_i \eta_j g_j(t_0)\|_{L^2} +  \| \chi_i \eta_j f_i(t_1)\|_{L^2} \| \chi_i \eta_j
g_j(t_1)\|_{L^2}
\]
By Cauchy-Schwarz this implies that
\begin{equation}
  \label{bij:l-r}
\sum_{i,j} | B^{ij}_{L} - B^{ij}_{R}| \lesssim 1
\end{equation}
which indeed allows us to estimate $B^{ij}_{L}$ and $B^{ij}_{R}$
interchangeably. Using the representation of $Q_L$ in \eqref{qlr}
we have
  \[
  \begin{split}
    B_L^{ij}:= & \ Q_{L}( \chi_{[a_i,b_i]} (x\cdot \e_1) f_i,
\eta_{[c_j,d_j]} (x\cdot \e_2) g_j) \\
    = & \ \langle f_i(b_i) \delta_{x\cdot \e_1=b_i} - f_i(a_i)
    \delta_{x\cdot \e_1=a_i}, P(\eta_{[c_j,d_j]} (x\cdot \e_2) g_j)
    \rangle
  \end{split}
  \]
A symmetric formula holds for $B_R^{ij}$.
  For $g_j$ we have lateral energy estimates in the $\e_1$ directions,
  and $P$ has a rapidly decreasing kernel. Hence the above expression
  is bounded by
  \begin{equation}
    |B_{ij}^L| \lesssim \|  \eta_j f_i(b_i)\|_{L^2}
\|  \eta_j g_j(b_i)\|_{L^2}
 +
\|  \eta_j f_i(a_i)\|_{L^2}
\|  \eta_j g_j(a_i)\|_{L^2}
    \label{bleft}\end{equation}
 In order to complete the proof of \eqref{Pdual} for atoms
 we need to distinguish
between different interval balances:

{\bf A. Unbalanced intervals:} Either $b_i - a_i \gg d_j - c_j$ or
  $b_i - a_i \ll d_j - c_j$. In this case we will prove that
\begin{equation}\label{bwish}
    \begin{split}
      \min\{ |B_L^{ij}|,|B_R^{ij}|\}  \lesssim&\; (\| \tilde \eta_j(x \cdot \e_2)
      f_i(b_i)\|_{L^2} + \| \tilde \eta_j(x \cdot \e_2)
      f_i(a_i)\|_{L^2}) \\
      &
      (\| \tilde  \chi_i(x \cdot \e_1) g_j(d_j)\|_{L^2}+ \| \tilde \chi_i(x \cdot \e_1) g_j(c_j)\|_{L^2})
    \end{split}
  \end{equation}
  for some more relaxed bump functions $\tilde \eta_j$ and $\tilde
  \chi_i$ which share the properties of $\chi_i$ and $\eta_j$.  Assuming
  \eqref{bwish} is  true, the estimate for the corresponding part  of \eqref{Pdual} easily follows  
  from Cauchy-Schwarz:
  \[
  \begin{split}
    \sum_{i,j}  \min\{ |B_L^{ij}|,|B_R^{ij}|\}
\lesssim & \ \sum_{i,j} \| \tilde \eta_j(x
    \cdot \e_2) f_i(b_i)\|_{L^2}^2 + \| \tilde \eta_j(x \cdot \e_2)
    f_i(a_i)\|_{L^2}^2 \\ & \ +\sum_{i,j} \| \tilde \chi_i(x \cdot
    \e_1) g_j(d_j)\|_{L^2}^2 + \| \tilde \chi_i(x \cdot \e_1)
    g_j(c_j)\|_{L^2}^2 \\ \lesssim & \ \sum_{i} \| f_i(b_i)\|_{L^2}^2
    + \| f_i(a_i)\|_{L^2}^2 +\sum_{j} \| g_j(d_j)\|_{L^2}^2 + \|
    g_j(c_j)\|_{L^2}^2 \\ \lesssim & \ 1
  \end{split}
  \]

By symmetry suppose that  $b_i - a_i \gg d_j - c_j$. Then \eqref{bwish}
follows from \eqref{bleft} due to the propagation estimate
\[
\|  \eta_j g_j(b_i)\|_{L^2}+ \|  \eta_j g_j(a_i)\|_{L^2}
\lesssim \| \chi_i  g_j(d_j)\|_{L^2}+ \| \chi_i  g_j(c_j)\|_{L^2}
\]
To see this it suffices to consider the Schr\"odinger propagator from
the surfaces $x \cdot \e_1 = a_i, b_i$ to the surfaces $x \cdot \e_2 =
c_j, d_j$. corresponding to waves which are localized in $A_0$.  On
the one hand, with respect to suitable elliptic multiplier weights, this
is an $L^2$ isometry. On the other hand, its kernel decays rapidly
outside a conic neighborhood of the propagation cone associated to
$A_0$.  Hence all that remains to be seen is that the propagation cone
of the interval $\{ x \cdot \e_1 = a_i, \ x \cdot \e_2 \in
[c_j,d_j]\}$ either intersects the line $x \cdot \e_2 = c_j$ within
the interval $ x \cdot \e_1 \in [a_i,b_i]$ or intersects the line
$x \cdot \e_2 = d_j$ within the interval $ x \cdot \e_1 \in
[a_i,b_i]$. But this is a geometric consequence of the unbalanced
intervals.

  {\bf B. Balanced intervals.}  Here we consider the case when $b_i -
  a_i \sim d_j - c_j$.  The first observation is that it suffices to
  consider a fixed dyadic scale $X$ and assume that
  \[
  b_i - a_i \sim d_j - c_j \sim X
  \]
  The dyadic summation with respect to $X$ will be straightforward
  since we have $l^2$ summbability both on the $f$ and on the $g$
  side.

  The simplification that occurs when we fix the interval size is that
  we are allowed to relax the localization scale in the choice of the
  functions $\chi_j$ and $\eta_j$ in \eqref{bleft}.  Precisely,
  instead of the rapid decay on the unit scale (dictated by the
  smallest distance to the next interval) we allow them to decay
  rapidly on the $X$ scale, and denote them by $\chi_i^X$ and
  $\eta_j^X$. This makes the following norms
  equivalent:
  \[
  \| \eta_j^X f_j(b_i)\|_{L^2}
\approx \| \eta_j^X f_j(a_i)\|_{L^2}
\approx  \|\chi_i^X f_j(c_j)\|_{L^2} \approx
 \|\chi_i^X f_j(c_j)\|_{L^2}
 \]
  by standard propagation arguments.

  By \eqref{bleft} this implies the version of \eqref{bwish} with the
  weights $\chi_i^X$ and $\eta_j^X$.  The punch line is then in the
  $i$ and $j$ summation argument under \eqref{bwish}.  The bumps $
  \chi_i^X$ and $ \eta_j^X$ are wider now, but they are still almost
  orthogonal since the intervals are now also uniformly spaced at
  distance $X$ (or above).

\end{proof}

Our next lemma serves to prove the estimate \eqref{Xsigma-solve-0}, which
is needed for Proposition~\ref{p:Xsigma}. In order to do that we need
two more definitions, namely the local energy space (centered at $0$)
$LE$ and its dual $LE^*$. The $LE$ space-time norm adapted to
frequency-one functions is defined as
\[
\|\phi \|_{LE} = \|\phi\|_{L^2(|x| \lesssim 1)} +
\sup_{j >0} 2^{-\frac{j}2}  \|\phi\|_{L^2(|x| \approx 2^j)}
\]
Then we have

\begin{lem}\label{Xsigma0-solve}
  Let $s > 0$. Then for frequency-one functions $u$ solving $(i \partial_t - \Delta) u =
  f_1+f_2$, where $f_1$ has no radial modes,  we have the following estimate
  \begin{equation}\label{ang-solve}
    \| u \|_{LE} + \| \langle r\rangle^{-\frac12 - s } \langle \snabla\rangle^{s} u
    \|_{L^2} \lesssim
    \| u(0)\|_{L^2} + \|  \langle r\rangle^{\frac12 + s } \langle \snabla\rangle^{-s} f_1 \|_{L^2} + \|f_2\|_{LE^*}
  \end{equation}
\end{lem}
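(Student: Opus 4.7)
The plan is a positive commutator (Morawetz) argument, exploiting that $\snabla=\partial_\theta$ commutes with both $\Delta$ and every radial weight. Decomposing $u=\sum_n u_n(r,t)e^{in\theta}$ in angular Fourier modes, $\langle\snabla\rangle^s$ acts as multiplication by $(1+n^2)^{s/2}$ on the $n$-th mode, and the hypothesis $f_{1,0}=0$ is precisely what is needed to absorb the $\langle\snabla\rangle^{-s}$ loss on the $f_1$ forcing.

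For the radial mode $n=0$, $\langle\snabla\rangle^s=1$, and the weighted term $\|\langle r\rangle^{-1/2-s}u_0\|_{L^2}$ is absorbed into $\|u_0\|_{LE}$ via the convergent dyadic sum $\sum_j 2^{-2js}$, which requires $s>0$. This case therefore reduces to the classical (radial) Kato local smoothing estimate, proved by a standard positive commutator with a bounded monotone radial multiplier $A=-i(h(r)\partial_r+\partial_r h(r))/2$; the $f_2$ forcing is controlled via the dyadic duality $L^2(\langle r\rangle^{1+2s}\,dx)\hookrightarrow LE^*$.

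For nonradial modes $n\neq 0$ the key geometric fact is that a frequency-$1$ function in angular mode $e^{in\theta}$ is essentially supported on $|x|\gtrsim|n|$, either by the Bessel asymptotic $J_n(r)\lesssim(Cr/|n|)^{|n|}$ for $r\ll|n|$ or, equivalently, by the inequality $|x\wedge\xi|\le|x||\xi|$ restricted to $|\xi|\sim 1$. Once dyadic sums are truncated to $j\ge\log|n|$, one gains exactly $|n|^{-s}$ in the $\ell^2$ summation, producing the matched pair
\[
(1+n^2)^{s/2}\|\langle r\rangle^{-1/2-s}u_n\|_{L^2}\lesssim\|u_n\|_{LE},\qquad \|f_{1,n}\|_{LE^*}\lesssim(1+n^2)^{-s/2}\|\langle r\rangle^{1/2+s}f_{1,n}\|_{L^2}.
\]
Combining with a mode-wise Kato estimate $\|u_n\|_{LE}\lesssim\|u_n(0)\|_{L^2}+\|f_{1,n}\|_{LE^*}+\|f_{2,n}\|_{LE^*}$ and summing in $n$ by Parseval on the data and on the weighted $f_1$ term then yields \eqref{ang-solve}.

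The main obstacle is that $LE^*$ does \emph{not} square-sum over angular modes (being an $\ell^1$-type norm in the dyadic scale), so a naive mode-by-mode argument cannot directly accommodate the $\|f_2\|_{LE^*}$ term on the right. I would circumvent this by running the positive commutator argument \emph{globally} rather than mode-by-mode, with a multiplier $A=h(r,\snabla)\partial_r+\partial_r h(r,\snabla)$ whose commutator with $-\Delta$ majorizes both $\langle r\rangle^{-1-2s}\langle\snabla\rangle^{2s}|u|^2$ (via the centrifugal identity $[-\Delta,iA]|_{\text{mode }n}\supset n^2 h(r,n)/r^3$) and the usual local energy density near the origin, bounding the error terms against $f_2$ directly in $LE^*$ on the right.
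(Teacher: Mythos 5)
Your argument up to the final paragraph is essentially the paper's proof: decompose in angular modes $u=\sum_n u_n e^{in\theta}$, apply the standard local energy decay estimate to each $u_n$, and use the fact that a frequency-one function in angular mode $n$ is negligible for $r\ll|n|$ to trade the $\langle r\rangle^{-s}$ weight for the $\langle\snabla\rangle^{s}$ gain (with $s>0$ providing the dyadic convergence). The paper establishes the support localization via a kernel estimate for the frequency projector $P_0$ acting on mode-$n$ data --- integrating by parts with $\snabla_y^N$ to gain a factor $(\langle r\rangle/|n|)^N$ --- rather than by quoting Bessel asymptotics, but these are the same fact.

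The misstep is your final paragraph. The ``main obstacle'' you identify is not one: the mode-by-mode reduction \emph{does} accommodate the $\|f_2\|_{LE^*}$ term. Writing $LE^*$ as the $\ell^1_j$ norm of $2^{j/2}\|\cdot\|_{L^2(|x|\sim 2^j)}$ and using Parseval in $\theta$ on each dyadic annulus, Minkowski's inequality gives
\[
\Bigl(\sum_n \|f_{2,n}\|_{LE^*}^2\Bigr)^{1/2}
= \Bigl\|\,\bigl\| 2^{j/2}\|f_{2,n}\|_{L^2(|x|\sim 2^j)} \bigr\|_{\ell^1_j}\,\Bigr\|_{\ell^2_n}
\leq \Bigl\|\,\bigl\| 2^{j/2}\|f_{2,n}\|_{L^2(|x|\sim 2^j)} \bigr\|_{\ell^2_n}\,\Bigr\|_{\ell^1_j}
= \|f_2\|_{LE^*},
\]
i.e.\ $\ell^2_n\ell^1_j \leq \ell^1_j\ell^2_n$, precisely the favorable direction. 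Dually, $\|u\|_{LE}^2 \leq \sum_n\|u_n\|_{LE}^2$ since $\sup_j \sum_n \leq \sum_n \sup_j$. So squaring your mode-wise estimate, summing in $n$, and invoking Parseval for the $L^2$ terms and these two inequalities for the $LE$ and $LE^*$ terms closes the argument. The proposed global positive commutator with a multiplier $h(r,\snabla)$ is therefore unnecessary machinery; the proof you already wrote is complete once you drop the (incorrect) claim that it fails.
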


\begin{proof}
  Our starting point is the standard local energy decay estimate for
  frequency-one functions, namely
  \begin{equation}\label{le-solve}
    \| u \|_{LE}   \lesssim
    \| u(0)\|_{L^2} + \|f \|_{LE^*}
  \end{equation}
  We expand the function $u$ in \eqref{ang-solve} in an angular
  Fourier series. This preserves the frequency localization, and it
  suffices to prove \eqref{ang-solve} for each such mode separately
  (with uniform constants).

  Our contention is that for a fixed angular mode the bound
  \eqref{ang-solve} is a direct consequence of \eqref{le-solve}.  To
  see that let $k \in \Z $ and $u$ be of the form
  \[
  u_k(t,x) = u_{k}(r,t) e^{ i k \theta}
  \]
  Then we have
  \[
  \la r\ra ^{-\frac12 - s } \snabla^{s} u_k = \la r\ra^{-\frac12 - s } k^{s} u_k
  \]
  This is easily controlled by the $LE$ norm of $u$ for $r \gtrsim
  k$. To deal with smaller $r$ we need to use the angular
  localization.  Precisely, we claim that
  \begin{equation} \label{rad-proj} \| r^{-\frac12 - s } P_0
    u_k\|_{L^2} \lesssim \| (r+k)^{-\frac12 - s } u_k\|_{L^2}
  \end{equation}
  which easily leads to
  \[
  k^s \| \la r \ra^{-\frac12 - s } P_0 u_k\|_{L^2} \lesssim \|u_k\|_{LE}
  \]
  and, by duality,
  \[
  \|P_0 f_k\|_{LE^*} \lesssim k^{-s} \| \la r\ra ^{\frac12 + s } f_k\|_{L^2}
  \]
 The last two bounds prove that \eqref{ang-solve} follows from
 \eqref{le-solve}.

  It remains to establish \eqref{rad-proj}. The kernel of $P_0$ is
  given by a Schwartz function $\phi$. Then for $|x| \lesssim k$ we
  write
  \[
  P_0 u_k (x) = \int \phi(x-y) u(y) dy = (-k)^{-N} \int \snabla_y^N
  \phi(x-y) u(y) dy.
  \]
  We have
  \[
  |\snabla_y^N \phi(x-y)| \lesssim \langle x \rangle^{N} \langle x-y
  \rangle^{-N}
  \]
  and therefore
  \[
  |P_0 u_k (x)| \lesssim \left(\frac{\langle r \rangle}{|k|}\right)^N
  \int \langle x-y \rangle^{-N} | u(y)| dy
  \]
  Hence \eqref{rad-proj} easily follows.

\end{proof}

As a consequence of the above lemma we get the following result, which proves
the embedding \eqref{u2toXs} needed in Theorem~\ref{t:xkyk}.

\begin{lem}\label{U2-Xsigma}
  The following inequality holds for frequency $2^k$ functions $u$:
  \begin{equation}\label{eq:U2-Xsigma}
    2^{(\frac12-s)k}
\|\langle r\rangle_k^{-\frac12 - s } \langle\snabla\rangle^{s} u \|_{L^2}
    \lesssim \|u\|_{U^{2,\sharp}}
  \end{equation}
In addition, if $u$ is localized in a unit time interval then
 \begin{equation}\label{l2U2-Xsigma}
   2^{(\frac12-s)k}  \|\langle r\rangle_k^{-\frac12 - s }
\langle\snabla\rangle^{s} u \|_{L^2}
   \lesssim \|u\|_{l^2_k U^{2,\sharp}}
  \end{equation}

\end{lem}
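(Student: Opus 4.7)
By scaling we may reduce to $k=0$, so that $\langle r\rangle_k = \langle r\rangle$, and the target bound becomes
\[
\|\langle r\rangle^{-\frac12-s}\langle \snabla\rangle^s u\|_{L^2} \lesssim \|u\|_{U^{2,\sharp}}
\]
for $u$ frequency-localized in a neighborhood of $|\xi|\sim 1$. For part (b), we must similarly show the $l^2$-square-summable version after cube cutoffs.

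For part (a), since $U^{2,\sharp} = U^2_\Delta + |D|^{-\frac12}\sum_\e P_\e U^2_\e$ is an atomic space, it suffices to establish the bound for each of the two types of atoms. For a normalized $U^2_\Delta$-atom $a = \sum_j \chi_{[t_{j-1},t_j)}(t)\, e^{i(t-t_{j-1})\Delta} h_{j-1}$ with $\sum_j\|h_{j-1}\|_{L^2}^2 = 1$, each step is a free Schr\"odinger wave. I apply Lemma~\ref{Xsigma0-solve} with vanishing source terms to the free evolution of $h_{j-1}$ (no restriction on radial modes is needed in the sourceless case), obtaining
\[
\|\langle r\rangle^{-\frac12-s}\langle \snabla\rangle^s e^{i(t-t_{j-1})\Delta} h_{j-1}\|_{L^2_{t,x}(\R\times\R^2)} \lesssim \|h_{j-1}\|_{L^2}.
\]
Restricting each wave to its (disjoint) time interval $[t_{j-1},t_j)$ and square-summing yields the desired bound. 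For a lateral atom $u = |D|^{-\frac12}P_\e v$ with $v$ a $U^2_\e$-atom frequency-localized in $A_\e$, the factor $|D|^{-\frac12}$ is harmless at frequency $1$, and I apply the analogue of Lemma~\ref{Xsigma0-solve} for the sideways evolution $\partial_\e - iL_\e$; this sideways local energy estimate is the natural counterpart to the lateral smoothing bound \eqref{est:u2localsmoothing} and is established by the same Morawetz-type argument (angular decomposition, radial exclusion, weighted $L^2$ bounds).

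For part (b), I exploit the spatial localization inherent in the $l^2_k U^{2,\sharp}$ norm. Decomposing $u = \sum_m \chi_k^m u$, each piece $\chi_k^m u$ is a $U^{2,\sharp}$ function whose spatial support lies in the $2^k$-cube centered at $2^k m$. Translation invariance of the Schr\"odinger equation gives a shifted version of part (a) centered at $x_0 = 2^k m$:
\[
\|\langle x-2^k m\rangle_k^{-\frac12-s}\langle \snabla_{2^k m}\rangle^s(\chi_k^m u)\|_{L^2} \lesssim 2^{-(\frac12-s)k}\|\chi_k^m u\|_{U^{2,\sharp}}.
\]
On the support of $\chi_k^m$ (and using unit time localization so that propagation tails remain within a $2^k$-neighborhood of the cube), one has $\langle r\rangle_k \approx 2^k\langle m\rangle$, and the discrepancy $\snabla - \snabla_{2^k m} = 2^k m \wedge \partial_x$ contributes at most a factor of $2^k\langle m\rangle$ times $|\partial_x|$ (which is $\sim 1$ at frequency $1$ after the scaling reduction). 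The extra $\langle m\rangle^s$ arising from this shift is absorbed by the superior decay $\langle m\rangle^{-\frac12-s}$ of the origin-centered weight on far cubes, leaving $\langle m\rangle^{-\frac12}$, which produces $l^2$-summability after pairing with the almost-orthogonal cube partition. Square-summing in $m$ and invoking the definition of $l^2_k U^{2,\sharp}$ completes the bound.

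The main obstacle is Step for part (b): reconciling the origin-centered angular derivative $\langle\snabla\rangle^s$ with the shifted version $\langle\snabla_{2^k m}\rangle^s$ naturally produced by translating part (a) to the cube at $2^k m$. The na\"ive pointwise commutator estimate for $\snabla$ grows like $2^k\langle m\rangle$, and it is essential that the precise combined weight $\langle r\rangle_k^{-\frac12-s}$ (rather than merely $\langle r\rangle_k^{-\frac12}$) exactly offsets this growth and still leaves a summable $\langle m\rangle^{-1-2s}$ factor in $\R^2$. Making this balance rigorous (together with handling the non-commutation of $\chi_k^m$ with fractional angular derivatives, which is managed by the frequency localization at scale $1$) is the technical heart of the argument.
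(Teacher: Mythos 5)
The key gap is in part (a), in the treatment of the lateral atoms. You write that one applies ``the analogue of Lemma~\ref{Xsigma0-solve} for the sideways evolution,'' but this sidesteps the actual difficulty. A $U^2_\e$ atom is a sum $u=\sum_j\chi_{[a_j,b_j)}(x\cdot\e)\,u_j$ of sideways free waves with \emph{spatial} cutoffs in the $x\cdot\e$-direction. Unlike the $U^2_\Delta$ case, where the cutoffs are in time and therefore commute with the purely spatial operator $\langle r\rangle^{-\frac12-s}\langle\snabla\rangle^s$ (so square-summing over disjoint time intervals is automatic), the cutoffs $\chi_{[a_j,b_j)}(x\cdot\e)$ do \emph{not} commute with the nonlocal fractional angular derivative $\langle\snabla\rangle^s$. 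Thus even granting the estimate for each single piece $u_j$ (which is a free Schr\"odinger wave by the symbol factorization, so Lemma~\ref{Xsigma0-solve} applies directly --- no new ``sideways Morawetz'' lemma is needed), one must still prove an orthogonality estimate
\[
\Big\|\langle r\rangle_k^{-\frac12-s}\langle\snabla\rangle^{s}\Big(\sum_j\chi_j u_j\Big)\Big\|_{L^2}^2\lesssim\sum_j\big\|\langle r\rangle_k^{-\frac12-s}\langle\snabla\rangle^{s}u_j\big\|_{L^2}^2,
\]
and this is where the real work lies. The paper reduces exactly this to a square-function inequality on $H^s(\mathbb{S}^1)$ for $0\le s<\frac12$, i.e.\ $\|\sum_j\chi_j u_j\|_{H^s(\mathbb{S}^1)}^2\lesssim\sum_j\|u_j\|_{H^s(\mathbb{S}^1)}^2$, and proves it via a physical-space Littlewood--Paley argument distinguishing long from short intervals. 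Your proposal never confronts this step, and without it the lateral case is open.

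For part (b), you take a genuinely different route from the paper: translate part (a) to the cube center $2^km$ and then ``shift back'' the angular derivative and weight, arguing that the weight decay $\langle m\rangle^{-\frac12-s}$ offsets the commutator growth of $\snabla-\snabla_{2^km}=2^km\wedge\partial_x$. The paper instead runs the \emph{same} atomic/orthogonality argument as for part (a), simply taking the atomic decomposition separately in each $2^k$-cube; the $H^s(\mathbb{S}^1)$ estimate then does all the work. Your shift argument is plausible at the level of symbol sizes, but it is delicate: $\langle\snabla\rangle^s$ is nonlocal in $\theta$, so ``on the support of $\chi_k^m$'' is not well-defined after the operator acts, and comparing $\langle\snabla\rangle^s$ with $\langle\snabla_{2^km}\rangle^s$ is not a pointwise matter --- the paper's Lemma~\ref{Xsigma0-shift} does such a comparison but only in one direction and at the cost of the full $X^{x_0,\sigma,\sharp}$ control, not just the weighted $L^2$ norm. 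In any case part (b) here relies on part (a), which is the step with the gap.
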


\begin{proof}
  It suffices to consider $0 < s < \frac12$.  For the vertical $U^2_\Delta$
  space the bound \eqref{eq:U2-Xsigma} is a direct consequence of the
  previous lemma via the atomic decomposition. It remains to consider
  a lateral $U^2_\e$ space and a corresponding atom
  \[
  u = \sum \chi_j u_j
  \]
  For each of these atoms we have
  \[
  2^{(\frac12-s)k} \|\langle r\rangle_k^{-\frac12 - s } \langle
  \snabla \rangle^{s} u_j \|_{L^2} \lesssim \|u_j(0)\|_{L^2}
  \]
  Thus it would suffice to show that
  \[
  \|\langle r\rangle_k^{-\frac12 - s } \langle \snabla \rangle^{s} (\sum
  \chi_j u_j) \|_{L^2}^2 \lesssim \sum \|\langle r\rangle^{-\frac12 - s }
  \langle \snabla \rangle^{s}u_j \|_{L^2}^2
  \]
The bound \eqref{l2U2-Xsigma} also reduces to the same estimate,
with the only difference being that the atomic decomposition
is now done separately in each $2^k$ sized spatial cube.

The last bound reduces to an estimate on the unit circle,
  \[
  \|\sum \chi_j u_j\|_{H^s(\mathbb S^1)}^2 \lesssim \sum \|
  u_j\|_{H^s(\mathbb S^1)}^2, \qquad 0 \leq s < \frac12
  \]
  It makes no difference whether this is done on the circle or on the
  real line. The following argument is for the case of the real line.
  We begin with a simple observation, namely that
  \[
  \|\chi_j u_j\|_{H^s} \lesssim \|u_j\|_{H^s}
  \]
  Hence we can drop the cutoffs $\chi_j$ and instead assume that the
  $u_j$ have disjoint supports in consecutive intervals $I_j$. We use
  a Littlewood-Paley decomposition, but instead of having sharp
  Fourier localization it is convenient to choose multipliers $P_k$
  whose kernels have sharp localization in the physical space on the
  $2^{-k}$ scale.

  To estimate $P_k(\sum u_j)$, we split the intervals $I_j$ into long
  ($2^k |I_j| > 1$) and short ($2^k |I_j| < 1$). The outputs of long
  intervals are almost orthogonal,
  \[
  \| P_k (\sum_{I_j \ long} u_j) \|_{L^2}^2 \lesssim \sum_{I_j \ long}
  \|P_k u_j\|_{L^2}^2
  \]
  It remains to consider the outputs of short intervals. We still have
  orthogonality at interval separations of $2^{-j}$, and so we can write
  \[
  2^{2sk} \| P_k (\sum_{I_j \ short} u_j) \|_{L^2}^2 \lesssim 2^{2sk}
  \sum_{|I| = 2^{-k}} \left(\sum_{I_j \ short}^{I_j \subset 2I} \| P_k u_j
    \|_{L^2}\right)^2
  \]
  But for short intervals we can use the fact that the kernel of $P_k$
is a bump function with $2^{-k}$ sized support and $2^k$ amplitude to write
  \[
  \| P_k u_j \|_{L^2} \lesssim |I_j|^\frac12 2^{k/2} \|u_j\|_{L^2}
  \lesssim |I_j|^{\frac12+s} 2^{k/2} \|u_j\|_{H^s}
  \]
  Hence we obtain
  \[
  \begin{split}
    2^{2sk} \| P_k (\sum_{I_j \ short} u_j) \|_{L^2}^2 \lesssim & \
    \sum_{|I| = 2^{-k}} \left(\sum_{I_j \subset 2I} (2^k
      |I_j|)^{\frac12+s} \|u_j\|_{H^s}\right)^2
    \\
    \lesssim & \ \sum_{|I| = 2^{-k}} \left(\sum_{I_j \subset 2I} 2^k
      |I_j|\right ) \left(\sum_{I_j \subset 2I} (2^k |I_j|)^{2s}
      \|u_j\|_{H^s}^2\right)
    \\
    \lesssim & \sum_{2^k |I_j| \leq 1} (2^k |I_j|)^{2s}
    \|u_j\|_{H^s}^2
  \end{split}
  \]
  and the $k$ summation is straightforward.

\end{proof}
Finally, the following lemma allows us to move centers and proves
the estimate \eqref{Xsigma-emb-0}, which is needed in Proposition~\ref{p:Xsigma}.:

\begin{lem}\label{Xsigma0-shift}
  For $u$ localized at frequency one solving $(i \partial_t - \Delta) u =
  f$  and for any $x_0 \in \R^2$, we have
  \begin{equation}
    \| \langle x-x_0\rangle^{-\frac12 - s } \langle \snabla_{x_0}\rangle^{s} u
    \|_{L^2} \lesssim
    \| u(0)\|_{L^2} + \|  \langle r\rangle^{\frac12 + s }
\langle \snabla\rangle^{-s} f \|_{L^2}
  \end{equation}

\end{lem}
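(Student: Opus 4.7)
My plan is to reduce this to Lemma~\ref{Xsigma0-solve} by translating coordinates so that $x_0$ becomes the origin, and then to transfer the resulting $x_0$-centered norms on $f$ back to the origin-centered norm of $f$ that appears on the right-hand side. Concretely, set $v(t,y) := u(t, y+x_0)$, so that $v$ solves $(i\partial_t - \Delta) v = g$ with $g(t,y) := f(t, y+x_0)$. The left-hand side we want to bound equals $\|\langle y\rangle^{-\frac12 - s}\langle \snabla_0 \rangle^s v\|_{L^2}$. Splitting $g = g_1 + g_2$ where $g_2$ is the radial average of $g$ around $y=0$ (so $g_1$ has no radial modes), Lemma~\ref{Xsigma0-solve} yields
\begin{equation*}
\|\langle y\rangle^{-\frac12 - s}\langle \snabla_0 \rangle^s v\|_{L^2} \lesssim \|u(0)\|_{L^2} + \|\langle y\rangle^{\frac12 + s}\langle \snabla_0\rangle^{-s} g_1\|_{L^2} + \|g_2\|_{LE^*}.
\end{equation*}
In $x$-coordinates, the two $g$-terms are the $x_0$-centered analogues of the norm appearing in the statement. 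The remaining work is to bound them by $\|\langle r\rangle^{\frac12 + s}\langle \snabla_0\rangle^{-s} f\|_{L^2}$.

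I would carry out this transfer by a spatial case split according to $|x-x_0|$ relative to $|x_0|$. In the far region $|x - x_0| \gtrsim |x_0|$ one has $\langle x - x_0 \rangle \sim \langle x \rangle$, so the weights match up to absolute constants, while the angular operators differ by $\snabla_{x_0} - \snabla_0 = -x_0 \wedge \partial$, a constant-coefficient derivative of order one. Because $u$ (and effectively $f$) is frequency localized at scale $1$, this perturbation is harmless: at the endpoints $s = 0, 1$ the claim is direct, and the fractional case follows by complex interpolation of the weighted operator. In the near region $|x - x_0| \ll |x_0|$ the origin-centered weight satisfies $\langle r\rangle^{\frac12 + s} \sim |x_0|^{\frac12 + s}$, which is large and absorbs both the $x_0$-centered weighted $L^2$ norm of $g_1$ and the $LE^*_{x_0}$ norm on the relevant shells around $x_0$.

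The main obstacle is the $LE^*_{x_0}$ piece coming from $g_2$, because $LE^*$ is an $\ell^1$ sum over dyadic shells around $x_0$ whereas the target right-hand side is only $\ell^2$ in shells; one needs to produce an $\ell^1 \to \ell^2$ gain. The saving has to come from the angular smoothing inherent in taking the radial projection around $x_0$: a function that is radial around $x_0$ and supported in a shell of radius $\rho \ll |x_0|$ has rich non-radial content when viewed from the origin, and that content is amplified by $\langle \snabla_0\rangle^{-s}$ on the right-hand side. Combining this with the frequency-$1$ localization and a dyadic Schur estimate across the $\rho$-shells recovers the needed gap and closes the bound.
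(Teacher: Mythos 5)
Your plan differs substantially from the paper's proof, and it has both a structural gap and an error in the key heuristic.

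\textbf{The region split is wrong.} You split into $|x-x_0|\gtrsim |x_0|$ and $|x-x_0|\ll |x_0|$ and claim that in the first region $\langle x-x_0\rangle\sim\langle x\rangle$. That fails badly in the intermediate regime $|x-x_0|\approx |x_0|$: taking $x$ near the origin gives $\langle x\rangle\approx 1$ while $\langle x-x_0\rangle\approx\langle x_0\rangle$. The paper uses a genuine trichotomy (inner, intermediate, outer), and the intermediate region is handled by a $u$-side estimate, not by matching $f$-weights: since $\langle x-x_0\rangle\approx R:=\langle x_0\rangle$ and $\langle\snabla_{x_0}\rangle^{s}\lesssim R^{s}$ there (frequency-$1$ localization), the weighted norm of $\chi_{med}u$ collapses to $R^{-\frac12}\|\chi_{med}u\|_{L^2}\lesssim\|u\|_{LE}$, which is controlled by Lemma~\ref{Xsigma0-solve}.

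\textbf{The inner region argument is a different, harder route, and your heuristic for the $\ell^1\to\ell^2$ gap is wrong.} The paper avoids the weight-transfer problem for $f$ altogether: it commutes the cutoff $\chi_{in}$ through $i\partial_t-\Delta$, obtains a new equation $(i\partial_t-\Delta)(\chi_{in}u)=f_{in}$, and feeds \emph{all} of $f_{in}$ into the $LE^*$ slot of Lemma~\ref{Xsigma0-solve} centered at $x_0$. The commutator terms are controlled by $\|u\|_{LE}$, and $\|\chi_{in}f\|_{LE^*_{x_0}}$ is bounded using the decisive observation that $\chi_{in}f$ is supported in a single origin-centered dyadic shell $\langle x\rangle\approx R$, so the $\ell^1$-in-shells character of $LE^*$ costs only a single factor of $R^{\frac12}$. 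Your proposal instead tries to dominate $\|g_2\|_{LE^*_{x_0}}$ by the origin-centered weighted dual norm of $f$ across all dyadic $\rho$-shells around $x_0$ at once; you correctly identify the $\ell^1$/$\ell^2$ mismatch, but the ``saving'' you invoke is stated backwards. If the radial-around-$x_0$ piece $g_2$ had \emph{rich} angular content around the origin, then $\langle\snabla_0\rangle^{-s}$ — a smoothing operator — would make the right-hand side \emph{smaller}, not larger, worsening the inequality. What is actually true (and what one would need) is the opposite: $g_2$ has angular frequency around the origin bounded by $\approx R$, so $\langle\snabla_0\rangle^{-s}$ costs at most a controllable factor $R^{-s}$. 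Even granting that corrected claim, you would still need to verify that the $x_0$-radial projection $P_{0,x_0}$ is bounded on $\langle\snabla_0\rangle^{-s}L^2$, and none of this is carried out; it is substantially more delicate than the paper's commutator route.

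\textbf{The outer region is missing a term.} Even in the true far region $|x-x_0|\gg R$, expanding $\langle\snabla_{x_0}\rangle^{s}$ via $\snabla-\snabla_{x_0}=x_0\wedge\nabla$ and interpolating between $s=0,1$ produces an extra term $R^{s}\|\langle x+R\rangle^{-\frac12-s}u\|_{L^2}$ that the paper bounds by $\|u\|_{LE}$; your proposal calls the perturbation ``harmless'' and overlooks this.

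In short: the paper's inner and intermediate cases are $u$-side/local-energy arguments built around an auxiliary equation for $\chi_{in}u$, and only the outer case is a weight-transfer with an $LE$-controlled remainder. Your approach tries to do everything by transferring the dual norm of $f$, which is where the real difficulty hides and where the key step is left unproved.
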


\begin{proof}
  We split $u$ into several spatial regions depending on the ratio of
  $\langle x-x_0\rangle$ and $R =\langle x_0\rangle$.

  (i) The intermediate region, $ A_{med} = \{\langle x-x_0\rangle
  \approx R\}$.  In this region it suffices to use the local energy
  decay
  \[
  \| \langle x-x_0\rangle^{-\frac12 - s } \langle
  \snabla_{x_0}\rangle^{s} \chi_{med}u\|_{L^2} \lesssim R^{-\frac12}
  \| \chi_{med}u\|_{L^2} \lesssim \|u\|_{LE}
  \]
  and then the previous lemma.

  (ii) The inner region, $ A_{in} = \{\langle x-x_0\rangle \ll R\}$.
  Here we compute
  \[
  (i \partial_t - \Delta )(\chi_{in} u) = f_{in}:= -2 \nabla
  \chi_{in}\cdot \nabla u - \Delta \chi_{in} \cdot u + \chi_{in} f
  \]
  and use the local energy bound for $u$ to estimate
  \[
  \| f_{in}\|_{LE^*} \lesssim \|u\|_{LE} + \| \chi_{in} f \|_{LE^*}
  \lesssim \|u\|_{LE} + \| \langle r\rangle^{\frac12 + s } \langle
  \snabla\rangle^{-s} f \|_{L^2},
  \]
  where at the last step we have used the fact thet $\chi_{in}$ is
  supported in a single dyadic region $\langle x \rangle \approx R$.
Then we apply Lemma~\ref{Xsigma0-solve}.

  (iii) The outer region, $ A_{out} = \{\langle x-x_0\rangle \gg R\}$.
  Here we use the following estimate which applies for frequency one
functions
  \[
  \| \langle x-x_0\rangle^{-\frac12 - s } \langle
  \snabla_{x_0}\rangle^{s} \chi_{out} u \|_{L^2} \lesssim \| \langle
  x\rangle^{-\frac12 - s } \langle \snabla \rangle^{s} u \|_{L^2} + R^s \|
  \langle x+R\rangle^{-\frac12 - s }
  u\|_{L^2},
  \]
and estimate the second term on the right by the local energy norm.
This in turn  is proved by complex interpolation
 between $s = 0$ and $s = 1$ since
  \[
  \snabla - \snabla_{x_0} = x_0 \cdot \nabla
  \]
\end{proof}

%%%%%%%%%%%%%%%%%%%%%%%%%%%%%%%%%%%%%%%%%%%%%%%%%%%%%%%
%%%%%%%%%%                                                    
%%%%%%%%%%                      The linear part                     
%%%%%%%%%%                                                  
%%%%%%%%%%%%%%%%%%%%%%%%%%%%%%%%%%%%%%%%%%%%%%%%%%%%%%%
\section{The linear part of $N(\phi,A)$}  \label{s:lin}
In this section we prove the main estimate \eqref{lin-est} for the
component $L$ of the nonlinearity, see \eqref{L}. For convenience
we restate the full result here:

\begin{prop}
Let $C = H^{-1} \Delta^{-1} |\phi_0|^2$. Then for $s \geq 0$ we have
\begin{equation} \label{Cphi}
\|Q_{12}(C,\phi)\|_{X^{\sigma,s,*}} \lesssim \|\phi\|_{ X^{\sigma,s}}
\|\phi_0\|_{H^s}^2
\end{equation}
\end{prop}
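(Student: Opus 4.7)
By duality between $X^{\sigma, s, *}$ and $X^{\sigma, -s}$, it suffices to establish the bilinear bound
\[
\left|\int Q_{12}(C, \phi)\bar\psi\,dx\,dt\right| \lesssim \|\phi\|_{X^{\sigma, s}}\|\psi\|_{X^{\sigma, -s}}\|\phi_0\|_{H^s}^2.
\]
I would Littlewood--Paley decompose $\phi = \sum_k P_k\phi$, $\psi = \sum_l P_l\psi$, and $\phi_0 = \sum_n P_n\phi_0$ and analyze the resulting trilinear interactions in $(n, k, l)$.

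The cornerstone of the proof is the algebraic identity, valid when $C$ is radial about a point $x_0$,
\[
Q_{12}(C, \phi) = \frac{C'(r)}{r}\,\snabla_{x_0}\phi, \qquad r = |x - x_0|,
\]
which exhibits the null form as a radial scalar multiplier times the angular derivative $\snabla_{x_0}$. This $\snabla_{x_0}$ is exactly the derivative that the space $X^{x_0, \sigma}$ is designed to control, and the decomposition $X^{\sigma, *} = \sum_{x_0} X^{x_0, \sigma, *}$ provides the freedom to choose an optimal center for each contribution.

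The plan is then, for each frequency block of $|\phi_0|^2$, to decompose $C = C_{\mathrm{rad}} + C_{\mathrm{fluct}}$ around a well-chosen center $x_0$ (for instance, the center of mass of the block). For the radial part, the identity above applies directly, and a Cauchy--Schwarz pairing of the $X^{x_0, \sigma}$ weight on $\phi$ against the dual weight on $\psi$ closes the estimate, using the pointwise bound $|C_{\mathrm{rad}}'(r)/r| \lesssim \|\phi_0\|_{L^2}^2/r^2$ at infinity and heat-smoothed bounds near the origin. For the fluctuation, a multipole expansion of the 2D Newtonian potential shows that $|\nabla C_{\mathrm{fluct}}|$ decays strictly faster than $1/r$, providing enough room for a direct weighted $L^2$ estimate without invoking the null structure. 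Summation over $n$ is absorbed by the $2^{2sn}$ weights in $\|\phi_0\|_{H^s}^2$, while summation over $(k, l)$ uses off-diagonal decay together with the $2^{sk}$, $2^{-sl}$ weights in $\|\phi\|_{X^{\sigma, s}}$, $\|\psi\|_{X^{\sigma, -s}}$.

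The hard part is controlling the logarithmic growth of $C$ at infinity: $\Delta^{-1}|\phi_0|^2 \sim (2\pi)^{-1}(\log|x|)\|\phi_0\|_{L^2}^2$ sits at the threshold of what the function spaces can accommodate. The null form is indispensable: it converts a gradient that decays only like $1/r$ into an angular derivative weighted by $r^{-2}$, just enough to be absorbed by $\la r\ra^{-1/2 - \sigma}$ in $L^2$ once $\sigma > 0$. This marginality is precisely why the theorem requires $s > 0$ rather than $s = 0$.
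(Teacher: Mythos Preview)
Your key algebraic observation---that when $C$ is radial about $x_0$ one has $Q_{12}(C,\phi)=\frac{C'(r)}{r}\snabla_{x_0}\phi$---is exactly the mechanism the paper exploits, and your recognition that the variable-center structure $X^{\sigma,*}=\sum_{x_0}X^{x_0,\sigma,*}$ exists precisely to absorb this is correct. However, your proposed execution has a gap.

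The problem is your plan to decompose $C=C_{\mathrm{rad}}+C_{\mathrm{fluct}}$ around the ``center of mass of a frequency block of $|\phi_0|^2$.'' Frequency blocks $P_n(|\phi_0|^2)$ are not spatially localized, so no meaningful center of mass is available and no multipole expansion yields the extra decay you claim for $C_{\mathrm{fluct}}$. The paper bypasses this entirely: in the critical low--high interaction $Q_{12}(C_{<k},\phi_k)\psi_k$, the only information needed on $u_0=|\phi_0|^2$ is its $L^1$ norm. One then writes $u_0=\int u_0(x_0)\,\delta_{x_0}\,dx_0$, and by linearity and translation invariance it suffices to treat $u_0=\delta_{x_0}$, for which $C$ is \emph{exactly} radial about $x_0$. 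Your identity then applies with no remainder, and the $\sum_{x_0}$ in the definition of $X^{\sigma,*}_k$ absorbs the choice of center. The high--low and high--high interactions are handled by elementary Bernstein and $L^\infty L^2$ bounds, using only $\|u_0\|_{B^{1,\infty}_s}$.

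A secondary point: the proposition is stated and proved for $s\geq 0$, including $s=0$; the low--high estimate uses only $\|u_0\|_{L^1}\lesssim\|\phi_0\|_{L^2}^2$ and the weight $\la r\ra^{-1/2-\sigma}$ with $\sigma>0$ (which is fixed independently of $s$). So your closing attribution of the $s>0$ hypothesis of the main theorem to this estimate is not accurate---the logarithmic losses forcing $s>0$ arise elsewhere.
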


\begin{proof}
For $u_0 =  |\phi_0|^2$ we use the multiplicative Sobolev estimate
\[
\| u_0\|_{L^1} + \|u_0\|_{B^{1,\infty}_s} \lesssim \|\phi_0\|_{H^s}^2
\]
This is somewhat wasteful if $s > 0$ but it is tight for $s = 0$.
Using duality we rewrite the bound \eqref{Cphi} in the more symmetric form
\[
\left|\int Q_{12}(C,\phi) \psi dx dt\right| \lesssim
\|\phi\|_{ X^{\sigma,s}} \|\psi\|_{ X^{\sigma,-s}}
\|\phi_0\|_{H^s}^2
\]
Integrating by parts it is easy to see that the null form $Q_{12}$ can
be placed on any two of the factors $C,\phi,\psi$.  We use the
standard Littlewood-Paley trichotomy.

\bigskip

{\bf A. High-low interactions.}
Here we only need Bernstein's inequality to write for $k > j$
\[
\begin{split}
\left|\int Q_{12}(C_k,\phi_j) \psi_k dx dt\right|
\lesssim & \ 2^{k+j} \|C_k\|_{L^1} \|\phi_j\|_{L^\infty}
\|\psi_k\|_{L^\infty}
\\ \lesssim & \  2^{2(j-k)} 2^{-sk} \|u_0\|_{B^{1,\infty}_s}
 \|\phi_j\|_{L^\infty L^2}\|\psi_k\|_{L^\infty L^2}
\\ \lesssim & \  2^{2(j-k)} 2^{-sj}  \|\phi_0\|_{H^s}^2
 \|\phi_j\|_{ X^{\sigma,s}} \|\psi_k\|_{{ X^{\sigma,-s}} }
\end{split}
\]
The factor $2^{-sj}$ is not needed. The summation with respect to $k$
and $j$ is ensured by the off-diagonal decay.

\bigskip

{\bf B. High-high interactions.}
This case is equivalent to the one above if $s = 0$ and better if $s > 0$.

\bigskip

{\bf C. Low-high interactions.}
In this case it suffices to prove the estimate
\begin{equation}\label{q12pp}
\left|\int Q_{12}(C_{<k},\phi_k) \psi_k dx dt\right| \lesssim
\| \phi_k\|_{X^\sigma_k} \| \psi_k\|_{X^\sigma_k} \|u_0\|_{L^1}
\end{equation}
for some choice of $\sigma$. This choice is not important due to the
nesting property of the $X^\sigma_k$ spaces. It is easiest to work
with $\sigma =\frac12$.

By scaling we can take $k=0$. By translation invariance
we can take $u_0 = \delta_0$. Then $C$ is radial, and
\[
\nabla C_{< 0}(x) = x a(|x|,t), \qquad |a(r,t)| \lesssim
(1 + r+ t^\frac12)^{-2}
\]
Hence
\[
 Q_{12}(C_{<0},\phi_0) = a(|x|,t) \snabla \phi_0
\]
which shows that
\[
\| Q_{12}(C_{<0},\phi_0)\|_{X^{\frac12,*}_{0}} \lesssim
\| \phi_0\|_{X^{\frac12}_{0}}
\]
Thus \eqref{q12pp} follows.

\end{proof}

%%%%%%%%%%%%%%%%%%%%%%%%%%%%%%%%%%%%%%%%%%%%%%%%%%%%%%%
%%%%%%%%%%                                                    
%%%%%%%%%%                      Bilinear estimates                  
%%%%%%%%%%                                                   
%%%%%%%%%%%%%%%%%%%%%%%%%%%%%%%%%%%%%%%%%%%%%%%%%%%%%%%
\section{Bilinear estimates}  \label{S:BilinearEstimates}
We define the  temporal frequency localization operator $\qo_N$ to be the 
Fourier multiplier with symbol $\psi_N(\tau)$ and the modulation localization operator $Q_N$ to be 
the Fourier multiplier with symbol $\psi_N(\tau-\xi^2)$. 
Here $\psi_N$ is the same bump function we used in (\ref{PNsymbol}) to define Littlewood-Paley projections.

In the rest of the paper, $Q_N$ will be applied to single functions whereas $\qo_N$
will be applied to bilinear expressions.  
 
In the following, we will sometimes use the notation $U^2$, which stands for both $U^2_\Delta$ and $\due$. 
The same convention holds for  $V^2$.

 \subsection{Pointwise bilinear estimates}

These are needed for the case of balanced frequency interactions.
Let $I_\lambda$ denote the frequency annulus
$\{ \xi \in \R^2 : \lambda / 2 \leq \lvert \xi \rvert \leq 2\lambda \}$.
Our first result is
\begin{lem}\label{inftybound}
Let $\mu, \nu, \lambda$ be dyadic frequencies satisfying $\mu
\ll \lambda$ and $\nu \lesssim \mu \lambda$. Let
$\phi_\lambda, \psi_\lambda$ be functions with frequency support
contained in $I_\lambda$. Then
\begin{equation}\label{eq:inftybound}
\lVert P_{\mu} \qo_{\nu} (\bar{\phi}_\lambda \psi_\lambda) \rVert_{L^\infty}
\lesssim
\frac{\mu \nu}{\lambda}
\lVert \phi_\lambda \rVert_{V^{2,\#}}
\lVert \psi_\lambda \rVert_{V^{2,\#}}
\end{equation}
\end{lem}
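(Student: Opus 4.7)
The plan is to reduce the estimate to a pointwise Fourier-side computation and to extract the gain $\mu\nu/\lambda$ from a sharp volume bound on the admissible input frequencies. For free Schr\"odinger waves $\phi_\lambda = e^{it\Delta}f$ and $\psi_\lambda = e^{it\Delta}g$ at frequency $\lambda$, a direct Fourier calculation yields the pointwise bound
\[
|P_\mu \qo_\nu (\bar\phi_\lambda \psi_\lambda)(t,x)| \leq \int_S |\hat f(\xi_1)||\hat g(\xi_2)|\,d\xi_1\,d\xi_2,
\]
where
\[
S = \{(\xi_1,\xi_2) : |\xi_1|,|\xi_2|\sim\lambda,\ |\xi_2-\xi_1|\sim\mu,\ ||\xi_1|^2-|\xi_2|^2|\sim\nu\}.
\]

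The crucial geometric observation is that for each fixed $\xi_1$ with $|\xi_1|\sim\lambda$, the admissible $\xi_2$ is the intersection of a $\mu$-disc centered at $\xi_1$ with an annulus $||\xi_2|^2 - |\xi_1|^2|\sim\nu$ of radial width $\nu/\lambda$, yielding an arc of area $\sim\mu\nu/\lambda$; the hypothesis $\nu\lesssim\mu\lambda$ is precisely what keeps this intersection non-degenerate. The analogous estimate holds by symmetry for fixed $\xi_2$. A double Cauchy--Schwarz then gives
\[
\int_S|\hat f||\hat g|\,d\xi_1\,d\xi_2 \lesssim \frac{\mu\nu}{\lambda}\|f\|_{L^2}\|g\|_{L^2},
\]
which is the desired bound for free-wave inputs. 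A standard atomic decomposition promotes this to $U^2_\Delta\times U^2_\Delta$ inputs.

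For the lateral components of the $V^{2,\#}$ norm I would repeat the analysis with the sideways flow $\partial_\e - iL_\e$ in place of Schr\"odinger. A $U^2_\e$ atom at frequency $\lambda$ has Fourier support on (a portion of) the same paraboloid, so the volume count is preserved; the $|D|^{1/2}$-weight attached to the lateral piece of $\|\cdot\|_{V^{2,\#}}$ combines with the $\lambda^{-1/2}$ local-smoothing gain \eqref{est:u2localsmoothing} to reproduce the constant $\mu\nu/\lambda$ in the mixed vertical--lateral and purely lateral pairings.

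Finally, to pass from $U^2$ to $V^{2,\#}$, I would apply Lemma \ref{lem:interpol}, taking as auxiliary a weaker $U^q\times U^q\to L^\infty$ bound obtained trivially from Bernstein (the output sits in a space-time Fourier region of measure $\sim\mu^2\nu$) together with the Strichartz estimate \eqref{est:UpStrichartz}. The main obstacle is the combinatorial bookkeeping needed to verify the volume estimate in each of the vertical--vertical, vertical--lateral, and lateral--lateral combinations in the $V^{2,\#}$ decomposition, and to adapt the Fourier analysis to the lateral characteristic surfaces. Any logarithmic loss from the interpolation step is of the standard kind that gets absorbed into the margin $s>0$ used throughout the paper.
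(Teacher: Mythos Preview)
Your approach differs substantially from the paper's and has two genuine gaps.

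The paper's argument is much more direct: it splits according to whether each input has modulation $\gtrsim\mu\lambda$ or $\lesssim\mu\lambda$, and in each of the three resulting cases combines Bernstein on the output (the space-time Fourier support of $P_\mu\qo_\nu$ has measure $\sim\mu^2\nu$) with an $L^p$-type bound on the inputs that is available \emph{directly} in $V^{2,\#}$. For a high-modulation input one uses $\|Q_{\gtrsim\mu\lambda}\phi_\lambda\|_{L^2}\lesssim(\mu\lambda)^{-1/2}\|\phi_\lambda\|_{V^2_\Delta}$ via the embedding \eqref{uvbesov}; for a low-modulation input one first localizes to a small angle $P_\e$ (admissible because $\mu\ll\lambda$ forces the two input frequencies to be nearly parallel) and then uses the lateral energy bound $\|P_\e\phi_\lambda\|_{L^{\infty,2}_\e}\lesssim\lambda^{-1/2}\|\phi_\lambda\|_{V^{2,\#}}$. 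No atomic decomposition, no interpolation, no logarithms.

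The first gap in your route is the claim that ``a standard atomic decomposition promotes this to $U^2_\Delta\times U^2_\Delta$ inputs''. This is not standard here: your free-wave volume count uses in an essential way that both factors lie exactly on the paraboloid, so that the output temporal frequency $|\xi_2|^2-|\xi_1|^2$ is determined by the spatial frequencies and the restriction $\qo_\nu$ becomes a constraint on $(\xi_1,\xi_2)$. For a product of $U^2_\Delta$ atoms, $\qo_\nu$ is a convolution in time that mixes the steps of both atoms; summing the cross terms only gives $\ell^1\times\ell^1$ rather than $\ell^2\times\ell^2$ control, and you cannot push $\qo_\nu$ through by an $L^1$ kernel bound without forfeiting the $\nu$ gain. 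The transference via $\dot X^{0,1/2,1}$ also fails as written, since your volume bound is not stable under independent modulation shifts $\tau_j\mapsto\tau_j+\theta_j$ of the two inputs.

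The second gap is the final interpolation step. Lemma~\ref{lem:interpol} necessarily produces a factor $(\log(C/C_2)+1)^2$, but Lemma~\ref{inftybound} is stated with no logarithm. Saying the loss is ``absorbed into $s>0$'' may suffice for the downstream applications, but it does not prove the lemma as stated. The paper's high/low modulation splitting is precisely what reaches the $V^{2,\#}$ norm cleanly: both $V^2_\Delta\subset\dot X^{0,1/2,\infty}$ (for the high-modulation $L^2$ bound) and $V^2_\e\subset L^\infty_{x\cdot\e}L^2_{x',t}$ (for the lateral energy) hold without any logarithmic loss.
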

\begin{proof}
We first dispense with the high modulations in the inputs.
If both are high ($\gtrsim \mu \lambda$) then by Bernstein we have
\[
\lVert P_{\mu} \qo_{\nu} (\overline{Q_{\gtrsim \mu \lambda}
  \phi}_\lambda Q_{\gtrsim \mu \lambda}\psi_\lambda) \rVert_{L^\infty}
\lesssim \nu \mu^2 \| (\overline{Q_{\gtrsim \mu \lambda} \phi}_\lambda
Q_{\gtrsim \mu \lambda}\psi_\lambda) \rVert_{L^1}
\lesssim \frac{\mu \nu}{\lambda}
\lVert \phi_\lambda \rVert_{V^{2}_\Delta}
\lVert \psi_\lambda \rVert_{V^{2}_\Delta}
\]
If one is high and one is low then we decompose the low modulation
factor with respect to small angles and use the lateral energy:
\[
\lVert P_{\mu} \qo_{\nu} (\overline{P_\e
  \phi}_\lambda Q_{\gtrsim \mu \lambda}\psi_\lambda) \rVert_{L^\infty}
\lesssim \nu \mu^{\frac32} \| (\overline{P_\e \phi}_\lambda
Q_{\gtrsim \mu \lambda}\psi_\lambda) \rVert_{L^{2,1}_\e}
\lesssim \frac{\mu \nu}{\lambda}
\lVert \phi_\lambda \rVert_{\dve}
\lVert \psi_\lambda \rVert_{V^{2}_\Delta}
\]
Finally if both factors are low modulations then we decompose
with respect to small angles and compute
\[
\lVert P_{\mu} \qo_{\nu} (\overline{P_\e
  \phi}_\lambda P_\e \psi_\lambda) \rVert_{L^\infty}
\lesssim \nu \mu \| (\overline{P_\e \phi}_\lambda
P_\e \psi_\lambda) \rVert_{L^{\infty,1}_\e}
\lesssim \frac{\mu \nu}{\lambda}
\lVert P_\e \phi_\lambda \rVert_{\dve}
\lVert P_\e \psi_\lambda \rVert_{\dve}
\]
\end{proof}

A slight sharpening of the above result is as follows:
\begin{lem}\label{Hinftybound}
Let $\mu, \lambda$ be dyadic frequencies satisfying $\mu
\ll \lambda$.  
%and $\nu \lesssim \mu \lambda$.
Let
$\phi_\lambda, \psi_\lambda$ be functions with frequency support
contained in $I_\lambda$. Then
\begin{equation}\label{eq:Hinftybound}
\lVert P_{\mu} H^{-1} (\bar{\phi}_\lambda \partial_x \psi_\lambda) \rVert_{L^\infty}
\lesssim
\mu \lVert \phi_\lambda \rVert_{V^{2,\#}} \lVert \psi_\lambda \rVert_{V^{2,\#}}
\end{equation}
\end{lem}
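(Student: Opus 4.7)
I follow the strategy of Lemma~\ref{inftybound}, decomposing the output by temporal frequency $\qo_\nu$ and exploiting the smoothing of $H^{-1}$. Starting from
\[
P_\mu H^{-1}(\bar\phi_\lambda \partial_x\psi_\lambda) = \sum_{\nu} P_\mu H^{-1}\qo_\nu(\bar\phi_\lambda\partial_x\psi_\lambda),
\]
(dyadic sum in $\nu$), I use that the multiplier of $H^{-1}$ at $|\xi|\sim\mu$, $|\tau|\sim\nu$ has magnitude $\lesssim \tfrac{1}{\max(\nu,\mu^2)}$.

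For $\nu\lesssim \mu\lambda$, Lemma~\ref{inftybound} applied to $\bar\phi_\lambda\partial_x\psi_\lambda$ (with the derivative contributing an extra $\lambda$) gives
\[
\|P_\mu\qo_\nu(\bar\phi_\lambda\partial_x\psi_\lambda)\|_{L^\infty}\lesssim \mu\nu\,\|\phi_\lambda\|_{V^{2,\#}}\|\psi_\lambda\|_{V^{2,\#}}.
\]
For $\nu\gtrsim\mu\lambda$, the resonance identity
\[
\sigma_{\mathrm{out}} = \sigma_\phi - \sigma_\psi + 2\xi_\phi\cdot\xi_{\mathrm{out}}
\]
(with $\xi_{\mathrm{out}}=\xi_\psi-\xi_\phi$, $|\xi_{\mathrm{out}}|\sim\mu$, $|\xi_\phi|\sim\lambda$) shows that purely on-shell inputs produce output temporal frequency at most $\sim\mu\lambda$, so at least one input must carry modulation $\gtrsim\nu$. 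Combining the $X^{0,1/2,\infty}$-type embedding $\|Q_{\gtrsim\nu}\phi_\lambda\|_{L^2_{t,x}}\lesssim\nu^{-1/2}\|\phi_\lambda\|_{V^2_\Delta}$ with Cauchy--Schwarz and Bernstein $L^1\to L^\infty$ in space-time (costing $\mu^2\nu$), this case yields
\[
\|P_\mu\qo_\nu(\bar\phi_\lambda\partial_x\psi_\lambda)\|_{L^\infty}\lesssim \mu^2\lambda\,\|\phi_\lambda\|_{V^{2,\#}}\|\psi_\lambda\|_{V^{2,\#}}.
\]

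Multiplying by $1/\max(\nu,\mu^2)$ and summing over dyadic $\nu$, the range $\nu\leq\mu^2$ contributes $\sum \nu/\mu\lesssim\mu$ geometrically, and the range $\nu\geq\mu\lambda$ contributes $\sum \mu^2\lambda/\nu\lesssim\mu$ geometrically (dominated by $\nu\sim\mu\lambda$). The intermediate range $\mu^2\leq\nu\leq\mu\lambda$ gives $\mu$ per dyadic scale, which is the main obstacle: a naive sum would produce an unwanted $\log(\lambda/\mu)$ factor. To eliminate it, I exploit the angular constraint implicit in the resonance identity: for on-shell inputs contributing to output at temporal frequency $\nu$, the pair $(\xi_\phi,\xi_\psi)$ is forced into an angular cap of width $\sim \nu/(\mu\lambda)$ about the direction determined by $\xi_{\mathrm{out}}$ and $\nu$. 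Performing the corresponding angular decomposition of the inputs at that scale and summing with Cauchy--Schwarz, while controlling the individual pieces via the lateral component $\||D|^{\frac12}P_\e\phi\|_{V^2_\e}$ of the $V^{2,\#}$ norm, produces an additional factor $(\nu/(\mu\lambda))^{1/2}$ that restores geometric summability in the middle range and closes the estimate at the desired $\mu$.
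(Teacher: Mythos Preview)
Your decomposition $P_\mu H^{-1}=\sum_\nu P_\mu H^{-1}\qo_\nu$ correctly exposes the logarithmic divergence in the middle range $\mu^2\lesssim\nu\lesssim\mu\lambda$, and your treatment of the two extreme ranges is fine. The problem is your fix for the middle range: the claimed extra factor $(\nu/(\mu\lambda))^{1/2}$ from an angular decomposition does not actually materialize. The resonance identity gives $\tau_{\mathrm{out}}=\xi_{\mathrm{out}}\cdot(\xi_\phi+\xi_\psi)$, so for on-shell inputs with $|\tau_{\mathrm{out}}|\sim\nu$ the angle between $\xi_{\mathrm{out}}$ and $\xi_\phi$ is constrained to width $\sim\nu/(\mu\lambda)$. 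But this constraint links the \emph{output} direction to the \emph{input} direction; it does not restrict the input direction once you take $L^\infty$ in physical space (which averages over all output frequencies). If you decompose the inputs into angular caps of width $\nu/(\mu\lambda)$, each cap still contributes $\sim\mu$ to the $L^\infty$ bound via the lateral $L^{\infty,1}_\e$ estimate, and Cauchy--Schwarz over the caps returns exactly $\mu$ with no power gain; the output-strip restriction for each cap is in the $\e$-direction where you are already at $L^\infty$, so Bernstein cannot exploit it. Hence the middle range still produces $\mu$ per dyadic $\nu$ and the log survives.

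The paper sidesteps the whole issue in one stroke: rather than decomposing in $\nu$ and applying Bernstein scale by scale, it uses the pointwise kernel bound
\[
|K_\mu(t,x)|\lesssim\mu^{2}(1+\mu|x|)^{-N}(1+\mu^{2}|t|)^{-N}
\]
for $P_\mu H^{-1}$ directly. One then repeats the three cases of Lemma~\ref{inftybound} with $K_\mu$ replacing the Bernstein step; for instance in the ``both low modulation'' case, Young's inequality gives $\|K_\mu*f\|_{L^\infty}\le\|K_\mu\|_{L^{1,\infty}_\e}\|f\|_{L^{\infty,1}_\e}$, and a direct computation shows $\|K_\mu\|_{L^{1,\infty}_\e}\lesssim\mu$. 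The point is that treating $P_\mu$ and $H^{-1}$ together as a single convolution kernel avoids the artificial loss introduced by separating them via the $\qo_\nu$ decomposition.
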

\begin{proof}
  While using Bernstein's inequality as in the previous proof leads to
  a logarithmic divergence and is no longer immediately useful, we can instead
use kernel bounds for $ P_{\mu} H^{-1} $ with the same effect.  The kernel
$K_\mu$ of $P_{\mu} H^{-1} $ satisfies
\[
|K_\mu(t,x)| \lesssim \mu^{2} (1+ \mu |x|)^{-N} (1+ \mu^2
|t|)^{-N}
\]
Then one can repeat the three cases in the previous proof, but using the kernel bounds
instead of Bernstein's inequality.
\end{proof}

\subsection{$L^2$ bilinear estimates for free solutions}

We introduce an improved bilinear Strichartz estimate
that is a slight generalization of that first shown in \cite[Lemma 111]{Bo98}.
\begin{lem}[Improved bilinear Strichartz]\label{L:BilinearEstimate}
Let $u(x, t) = e^{it \Delta}u_0(x), v(x, t) = e^{i t \Delta}v_0(x)$,
where $u_0, v_0 \in L^2(\R^2)$.
Let $\Omega_1$ denote the support
of $\hat{u}_0(\xi_1)$, $\Omega_2$ the support of $\hat{v}_0(\xi_2)$,
and set $\Omega = \Omega_1 \times \Omega_2$.
Assume that $\Omega_1$ and $\Omega_2$ are
open and
separated by some
positive distance. Then
\begin{equation}
\lVert u \bar{v} \rVert_{L^2_{t,x}}
\lesssim
\left( \frac{\sup_{\xi, \tau}
\int_{ \substack{\xi = \xi_1 - \xi_2 \\ \tau = \lvert \xi_1 \rvert^2 - \lvert \xi_2 \rvert^2}}
\chi_{\Omega}(\xi_1, \xi_2)
d\mathcal{H}^1(\xi_1, \xi_2)}
{\dist(\Omega_1, \Omega_2)} \right)^{1/2}
\cdot
\lVert u_0 \rVert_{L^2} \lVert v_0 \rVert_{L^2}
\end{equation}
where $d\mathcal{H}^1$ denotes 1-dimensional Hausdorff measure
(on $\R^4$)
and
$\chi_\Omega(\xi_1, \xi_2)$ the characteristic function of $\Omega$.
\end{lem}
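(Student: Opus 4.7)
The plan is to take the space-time Fourier transform of $u\bar v$ and apply Cauchy--Schwarz along the resulting 1-dimensional resonance slices. Since $u=e^{it\Delta}u_0$ and $v=e^{it\Delta}v_0$, a direct calculation gives
\[
\widetilde{u\bar v}(\tau,\xi) = c\int \hat u_0(\xi_1)\,\overline{\hat v_0(\xi_2)}\,\delta(\xi_1-\xi_2-\xi)\,\delta(|\xi_1|^2-|\xi_2|^2-\tau)\,d\xi_1\,d\xi_2,
\]
so the integrand is supported on the resonance set $R_{\tau,\xi}:=\{(\xi_1,\xi_2):\xi_1-\xi_2=\xi,\ |\xi_1|^2-|\xi_2|^2=\tau\}$. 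After using the first delta to set $\xi_1=\xi+\xi_2$, the remaining constraint linearizes to the hyperplane $2\xi\cdot\xi_2=\tau-|\xi|^2$ in $\xi_2$-space; since $\xi\neq 0$ by the separation hypothesis, $R_{\tau,\xi}$ is an affine line in $\R^4$, and normalizing the second delta produces a Jacobian factor $1/(2|\xi|)$.

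Next I would apply Cauchy--Schwarz along the line $R_{\tau,\xi}$:
\[
|\widetilde{u\bar v}(\tau,\xi)|^2 \lesssim \frac{1}{|\xi|^2}\,\mathcal{H}^1(R_{\tau,\xi}\cap\Omega)\int_{R_{\tau,\xi}}|\hat u_0(\xi_1)|^2|\hat v_0(\xi_2)|^2\,d\mathcal{H}^1(\xi_1,\xi_2).
\]
Taking the supremum over $(\tau,\xi)$ of the Hausdorff measure factor yields exactly the numerator appearing on the right-hand side of the statement. The separation hypothesis $|\xi|\geq\dist(\Omega_1,\Omega_2)$ will take care of one of the two powers of $|\xi|$ in the denominator; the other will be absorbed by a coarea Jacobian in the next step.

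The final step is Plancherel combined with the coarea formula. For each fixed $\xi$, the map $\xi_2\mapsto \tau=|\xi|^2+2\xi\cdot\xi_2$ has Jacobian $2|\xi|$, so
\[
\int d\tau\int_{R_{\tau,\xi}}F(\xi_1,\xi_2)\,d\mathcal{H}^1 \;=\; 2|\xi|\int_{\R^2}F(\xi+\xi_2,\xi_2)\,d\xi_2.
\]
Applying this with $F=|\hat u_0(\xi_1)|^2|\hat v_0(\xi_2)|^2$ cancels one power of $|\xi|$ against the $|\xi|^{-2}$ from the previous step, leaves a single $|\xi|^{-1}\leq\dist(\Omega_1,\Omega_2)^{-1}$ to pull outside, and collapses the remaining double integral to $\|u_0\|_{L^2}^2\|v_0\|_{L^2}^2$ by Plancherel. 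The only genuine bookkeeping obstacle is to make sure the Hausdorff measure in the final statement, taken on $R_{\tau,\xi}\subset\R^4$, is consistent with the $\xi_2$-parametrization used above; the two differ only by a constant factor (coming from the fact that $\xi_2\mapsto(\xi+\xi_2,\xi_2)$ is an affine map with constant stretching factor $\sqrt{2}$), which enters harmlessly as an absolute multiplicative constant.
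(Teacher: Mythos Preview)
Your argument is correct and is essentially the same as the paper's: both hinge on Plancherel/duality, Cauchy--Schwarz along the one-dimensional resonance slice $\{\xi_1-\xi_2=\xi,\ |\xi_1|^2-|\xi_2|^2=\tau\}$, and the coarea Jacobian $\sim|\xi_1-\xi_2|\geq\dist(\Omega_1,\Omega_2)$. The only organizational difference is that the paper dualizes against a test function $g$, applies Cauchy--Schwarz in $(\xi_1,\xi_2)$ to peel off $\|u_0\|_{L^2}\|v_0\|_{L^2}$ first, and then invokes the coarea formula for the map $(\xi_1,\xi_2)\mapsto(\xi_1-\xi_2,|\xi_1|^2-|\xi_2|^2)$ from $\R^4$ to $\R^3$ to bound the remaining integral of $|g|^2$; you instead compute $\widetilde{u\bar v}(\tau,\xi)$ directly, apply Cauchy--Schwarz on the slice, and then use coarea in the reverse direction to integrate back in $(\tau,\xi)$.
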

\begin{proof}
To control $\lVert u \bar{v} \rVert_{L^2_{t,x}}$, we are led by duality to estimating
\begin{equation*}
\int_{\xi_1, \xi_2} g(\xi_1 - \xi_2, \lvert \xi_1 \rvert^2 - \lvert \xi_2 \rvert^2)
\hat{u}_0(\xi_1) \bar{\hat{v}}_0(\xi_2) d\xi_1 d\xi_2
\end{equation*}
We apply Cauchy-Schwarz and reduce
the problem to bounding
\begin{equation*}
G :=
\int_{(\xi_1, \xi_2) \in \Omega}
\lvert
g(\xi_1 - \xi_2, \lvert \xi_1 \rvert^2 - \lvert \xi_2 \rvert^2)
\rvert^2 d\xi_1 d \xi_2
\end{equation*}
Let $f : \R^4 \to \R^3$ be given by
$\R^2 \times \R^2 \ni (\xi_1, \xi_2)
\mapsto (\xi_1 - \xi_2, \lvert \xi_1 \rvert^2 - \lvert \xi_2 \rvert^2)
=: (\xi, \tau) \in \R^2 \times \R$.
The differential corresponding to this change of coordinates is
\begin{equation*}
df
=
\begin{bmatrix}
1 & 0 & -1 & 0 \\
0 & 1 & 0 & -1 \\
2 \xi_1^{(1)} & 2 \xi_1^{(2)} & -2 \xi_2^{(1)} & -2 \xi_2^{(2)}
\end{bmatrix}
\end{equation*}
The size $\lvert J_3 f \rvert$ of the 3-dimensional Jacobian of $f$
is  defined to be the square root of the sum
of the squares of the determinants of the $3 \times 3$ minors
of the differential $df$:
\begin{equation*}
\lvert J_3 f \rvert
:=
2\sqrt{2} \left( (\xi_2^{(2)} - \xi_1^{(2)})^2 + (\xi_2^{(1)} - \xi_1^{(1)})^2
+ (\xi_2^{(2)} - \xi_1^{(2)})^2 + (\xi_1^{(1)} - \xi_1^{(1)})^2 \right)^{1/2}
\end{equation*}
Hence
\begin{equation}
\lvert J_3 f \rvert = C \lvert \xi_2 - \xi_1 \rvert \geq C \dist(\Omega_1, \Omega_2)
\label{J3}
\end{equation}
By the coarea formula (see \cite[\S 3]{Fe59}),
\begin{align}
G &=
\int_{(\xi_1, \xi_2) \in \Omega}
\lvert
g(\xi_1 - \xi_2, \lvert \xi_1 \rvert^2 - \lvert \xi_2 \rvert^2)
\rvert^2
d\xi_1 d \xi_2 \nonumber \\
&=
\int_{\xi, \tau}
\int_{ \substack{(\xi_1, \xi_2) \in \Omega : \\
\xi = \xi_1 - \xi_2 \\ \tau = \lvert \xi_1 \rvert^2 - \lvert \xi_2 \rvert^2}}
\lvert
g(\xi_1 - \xi_2, \lvert \xi_1 \rvert^2 - \lvert \xi_2 \rvert^2)
\rvert^2
\lvert J_3 f \rvert^{-1}(\xi_1, \xi_2)
d\mathcal{H}^1(\xi_1, \xi_2) d\xi d\tau \nonumber \\
&\leq
\int_{\xi, \tau}
\lvert g(\xi, \tau) \rvert^2
\int_{ \substack{(\xi_1, \xi_2) \in \Omega : \\
\xi = \xi_1 - \xi_2 \\ \tau = \lvert \xi_1 \rvert^2 - \lvert \xi_2 \rvert^2}}
\lvert J_3 f \rvert^{-1}(\xi_1, \xi_2)
d\mathcal{H}^1(\xi_1, \xi_2) d\xi d\tau \nonumber \\
&\leq
\int_{\xi, \tau}
\lvert g(\xi, \tau) \rvert^2 d\xi d\tau
\cdot
\sup_{\xi, \tau}
\int_{ \substack{(\xi_1, \xi_2) \in \Omega : \\
\xi = \xi_1 - \xi_2 \\ \tau = \lvert \xi_1 \rvert^2 - \lvert \xi_2 \rvert^2}}
\lvert J_3 f \rvert^{-1}(\xi_1, \xi_2)
d\mathcal{H}^1(\xi_1, \xi_2)
\label{G}
\end{align}
In view of (\ref{J3}), the right hand side of $(\ref{G})$ is bounded
(up to a constant) by
\begin{equation*}
\lVert g \rVert_{L^2}^2 \cdot \dist(\Omega_1, \Omega_2)^{-1}
\cdot
\sup_{\xi, \tau}
\int_{ \substack{\xi = \xi_1 - \xi_2 \\ \tau = \lvert \xi_1 \rvert^2 - \lvert \xi_2 \rvert^2}}
\chi_{\Omega}(\xi_1, \xi_2)
d\mathcal{H}^1(\xi_1, \xi_2)
\end{equation*}
\end{proof}

A straightforward application of Lemma \ref{L:BilinearEstimate} yields
\begin{cor}[Bourgain's improved bilinear Strichartz estimate \cite{Bo98}]\label{C:Bourgain}
a) Let $\mu, \lambda$ be dyadic frequencies, $\mu \ll \lambda$. Let $\phi_\mu, \psi_\lambda$
denote free waves respectively localized in frequency to $I_\mu$ and $I_\lambda$. Then
\begin{equation}
\lVert \bar{\phi}_\mu \psi_\lambda \rVert_{L^2} \lesssim \frac{\mu^{1/2}}{\lambda^{1/2}}
\lVert \phi_\mu(0) \rVert_{L^2_x}
\lVert \psi_\lambda(0) \rVert_{L^2_x}
\label{Bourgain}
\end{equation}
b) If either $\phi_\mu$ or $\psi_\lambda$ is further frequency
localized to a box of size $\alpha\times\alpha$, then we have the
better estimate
\begin{equation}
\lVert \bar{\phi}_\mu \psi_\lambda \rVert_{L^2} \lesssim \frac{\alpha^{1/2}}{\lambda^{1/2}}
\lVert \phi_\mu(0) \rVert_{L^2_x}
\lVert \psi_\lambda(0) \rVert_{L^2_x}
\label{Bourgain2}
\end{equation}
\end{cor}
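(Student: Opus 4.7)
The plan is to derive both parts as direct consequences of Lemma~\ref{L:BilinearEstimate}. Writing $\bar\phi_\mu \psi_\lambda$ in the form $u \bar v$ with $u = \psi_\lambda$ and $v = \phi_\mu$, the supports are $\Omega_1 \subset I_\lambda$ and $\Omega_2 \subset I_\mu$. Since $\mu \ll \lambda$ we have $\dist(\Omega_1,\Omega_2) \gtrsim \lambda$, so all that remains is to bound the geometric quantity
\[
S(\xi,\tau) := \int_{\xi = \xi_1 - \xi_2,\ \tau = |\xi_1|^2 - |\xi_2|^2} \chi_{\Omega_1 \times \Omega_2}(\xi_1,\xi_2)\, d\mathcal{H}^1(\xi_1,\xi_2)
\]
uniformly in $(\xi,\tau)$.

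The key observation is that the two constraints $\xi_1-\xi_2 = \xi$ and $|\xi_1|^2-|\xi_2|^2 = \tau$ reduce, upon substituting $\xi_1 = \xi + \xi_2$, to the single affine condition $\xi \cdot \xi_2 = (\tau - |\xi|^2)/2$. Thus the set over which we integrate is (the graph over) a line $\ell$ in $\xi_2$-space perpendicular to $\xi$. For part (a), intersecting $\ell$ with $\Omega_2 \subset I_\mu$ produces a set of one-dimensional Hausdorff measure at most $O(\mu)$, so $S \lesssim \mu$; Lemma~\ref{L:BilinearEstimate} then yields the factor $(\mu/\lambda)^{1/2}$ as claimed.

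For part (b), if $\phi_\mu$ is further localized to a frequency box of side $\alpha$, then $\Omega_2$ is contained in such a box and the line $\ell$ meets it in a set of length at most $O(\alpha)$, giving $S \lesssim \alpha$ and hence the improvement $(\alpha/\lambda)^{1/2}$. If instead the refinement is placed on $\psi_\lambda$, the relation $\xi_1 = \xi + \xi_2$ shows the constraint on $\xi_1 \in \Omega_1$ translates into the same affine line restricted to a translate of a box of side $\alpha$, so the same bound on $S$ holds.

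There is no real obstacle here beyond the elementary geometric counting, which is the standard mechanism behind Bourgain's bilinear $L^2$ refinement; the work has all been done in proving Lemma~\ref{L:BilinearEstimate}, and the corollary follows by specializing $\Omega_1, \Omega_2$ and reading off the length of a line-annulus (resp.\ line-box) intersection.
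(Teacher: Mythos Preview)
Your proposal is correct and matches the paper's approach: the paper simply states that the corollary is ``a straightforward application of Lemma~\ref{L:BilinearEstimate}'' without further detail, and you have filled in exactly the intended computation. The only minor point worth noting is that the $\mathcal{H}^1$-measure in Lemma~\ref{L:BilinearEstimate} is taken in $\R^4$, whereas you parametrize by $\xi_2 \in \R^2$; the two differ by the constant factor $\sqrt{2}$ coming from the embedding $\xi_2 \mapsto (\xi_2+\xi,\xi_2)$, which is harmless.
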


As a corollary of the proof of Lemma \ref{L:BilinearEstimate}, we obtain the following.
\begin{cor}\label{C:BilinearEstimate}
Let $u(x, t) = e^{it \Delta}u_0(x), v(x, s) = e^{i s
\Delta}v_0(x)$, where $u_0, v_0 \in L^2(\R^2)$. Let $\Omega_1$
denote the support of $\hat{u}_0(\xi_1)$, $\Omega_2$ the support
of $\hat{v}_0(\xi_2)$. Assume that for all $\xi_1 \in \Omega_1$
and $\xi_2 \in \Omega_2$ we have
\begin{equation*}
| \xi_1 \wedge \xi_2 | \sim \beta
\end{equation*}
Then
\begin{equation}\label{BilinearEstimate}
\lVert u \bar{v} \rVert_{L^2_{s, t, x}}
\lesssim
\beta^{-1/2}
\lVert u_0 \rVert_{L^2} \lVert v_0 \rVert_{L^2}
\end{equation}
\end{cor}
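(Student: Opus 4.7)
The approach parallels the proof of Lemma \ref{L:BilinearEstimate}, the key difference being that $u$ and $\bar v$ are now evaluated at independent times $t$ and $s$. By duality it suffices to estimate the pairing $\langle u\bar v, g\rangle$ for $g \in L^2_{s,t,x}$ with $\|g\|_{L^2} = 1$. A direct Fourier calculation, using that $u,v$ are free Schr\"odinger waves, gives
\[
\langle u \bar v, g\rangle = c \iint \hat u_0(\xi_1)\, \overline{\hat v_0(\xi_2)}\, \overline{\tilde g(\xi_1 - \xi_2,\, |\xi_2|^2,\, -|\xi_1|^2)}\, d\xi_1\, d\xi_2,
\]
so Cauchy--Schwarz in $(\xi_1,\xi_2)\in\Omega$ reduces the claim to showing
\[
G := \int_{(\xi_1,\xi_2)\in\Omega} |\tilde g(\xi_1-\xi_2,\,|\xi_2|^2,\,-|\xi_1|^2)|^2\, d\xi_1\, d\xi_2 \lesssim \beta^{-1} \|g\|_{L^2}^2,
\]
in complete analogy with the corresponding step in Lemma \ref{L:BilinearEstimate}.

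The essential geometric observation is that the map
\[
\Phi : (\xi_1,\xi_2) \in \R^4 \longrightarrow (\xi_1-\xi_2,\,|\xi_1|^2,\,|\xi_2|^2) \in \R^4
\]
now sends $\R^4$ to $\R^4$, rather than $\R^4$ to $\R^3$ as in Lemma \ref{L:BilinearEstimate}, precisely because the independent time variables $s,t$ generate two independent frequency constraints. As a result, the coarea formula of the previous proof is replaced here by an ordinary change of variables. A block determinant computation gives
\[
|\det d\Phi(\xi_1,\xi_2)| = 4|\xi_1 \wedge \xi_2|,
\]
which, under the standing hypothesis, is comparable to $\beta$ uniformly on $\Omega$. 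Moreover $\Phi$ is at most two-to-one on $\Omega$: the fibre over a point $(\xi,\tau_1,\tau_2)$ is the intersection of the circle $|\xi_2|^2 = \tau_2$ with the line $2\xi_2\cdot\xi = \tau_1 - \tau_2 - |\xi|^2$, which contains at most two points.

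Combining these two facts and applying the change of variables yields
\[
G \lesssim \beta^{-1} \int_{\Phi(\Omega)} |\tilde g(\xi,\sigma,-\tau)|^2\, d\xi\, d\sigma\, d\tau \lesssim \beta^{-1} \|g\|_{L^2}^2,
\]
which, via Plancherel, is the desired bound. The only nonroutine point is the Jacobian computation producing the factor $|\xi_1\wedge\xi_2|$; once this angular wedge is identified and controlled by the hypothesis $|\xi_1\wedge\xi_2|\sim\beta$, the rest of the argument is a mechanical adaptation of the proof of Lemma \ref{L:BilinearEstimate}.
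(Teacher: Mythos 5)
Your proof is correct and follows essentially the same route as the paper: duality, Cauchy--Schwarz, and a change of variables $\Phi : (\xi_1,\xi_2) \mapsto (\xi_1-\xi_2, |\xi_1|^2, |\xi_2|^2)$ from $\R^4$ to $\R^4$, with Jacobian comparable to $|\xi_1 \wedge \xi_2| \sim \beta$. The paper states this quite tersely (and has a typo, writing $|\xi_1 \wedge \xi_1|$ for $|\xi_1 \wedge \xi_2|$); you usefully fill in the Jacobian determinant computation ($4|\xi_1 \wedge \xi_2|$ is correct) and explicitly note the at-most-two-to-one property of $\Phi$, which is indeed needed to justify the change of variables but goes unremarked in the paper.
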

\begin{proof}
As in the proof of Lemma \ref{L:BilinearEstimate}, we use a duality argument.
The key is to bound
\[
\int_{(\xi_1, \xi_2) \in \Omega} \lvert g(\xi_1 - \xi_2, | \xi_1 |^2, | \xi_2 |^2) \rvert d\xi_1 d\xi_2
\]
in $L^2$. In this setting, the proof is simpler because the change of variables $f$ is given by
$\R^2 \times \R^2 \ni (\xi_1, \xi_2) \mapsto (\xi_1 - \xi_2, |\xi_1|^2, |\xi_2|^2) \in \R^2 \times \R \times \R$
so that $f: \R^4 \to \R^4$ and $| df | \sim | \xi_1 \wedge \xi_1 |$.
\end{proof}

In order to achieve a gain at matched frequencies, we localize the output in both
frequency and modulation, seeking to bound
$P_{\mu} Q_{\nu} (\bar{\phi}_\lambda \psi_\lambda)$ in $L^2$.
That Lemma \ref{L:BilinearEstimate} may be used efficiently,
we introduce an adapted frequency-space decomposition of annuli $I_\lambda \subset \R^2$
that depends upon both the output frequency and modulation cutoff scales $\mu$
and $\nu$.

\begin{defin}[Frequency decomposition]\label{D:FreqDeco}
Suppose $\mu, \nu, \lambda$ are dyadic frequencies satisfying $\mu \ll \lambda$
and $\nu \leq \mu \lambda$. We define a partition of
$I_\lambda$ into curved boxes as follows. First, partition $I_\lambda$
into $\lambda^2 / \nu$ annuli of equal thickness. Next, uniformly
partition the annuli into $\lambda / \mu$ sectors of equal angle. The resulting set of curved
boxes we call $\cQ = \cQ(\mu, \nu, \lambda)$.
\end{defin}

The curved sides of the boxes in $\cQ$ have
length $\sim \mu$, whereas the straight sides of the boxes have length $\sim \nu / \lambda$.
By adapting a suitable partition of unity to the decomposition, we have
\[
f = \sum_{ \substack{\mu \ll \lambda \\ \nu \leq \mu \lambda}} \sum_{R \in \cQ(\mu, \nu, \lambda)} P_{R} f
\]

Note that we may extend $\cQ(\mu, \nu, \lambda)$ to all smaller dyadic
scales $\lambda^\prime < \lambda$ in the following way: take the
partition $\cQ(\mu, \nu, \lambda^\prime)$ and cut the annuli into
$\lambda / \lambda^\prime$ smaller annuli of equal thickness. In this
way we can impose a finer scale on lower frequencies.

\begin{cor}\label{C:bilinearL2}
  Let $\mu, \nu, \lambda$ be dyadic frequencies satisfying $\mu
  \lesssim \lambda$ and $\nu \lesssim \mu \lambda$.  Let
  $\phi_\lambda, \psi_\lambda$ be free waves with frequency support
  contained in $I_\lambda$.  Then
\begin{equation}\label{bilinearL2}
\lVert P_{\mu} \qo_{\nu} (\bar{\phi}_\lambda \psi_\lambda) \rVert_{L^2}
\lesssim
\frac{\nu^{1/2}}{(\mu \lambda)^{1/2}}
\lVert \phi_\lambda \rVert_{L^2_x}
\lVert \psi_\lambda \rVert_{L^2_x}
\end{equation}
\end{cor}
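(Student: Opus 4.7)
Plan:

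The overall strategy is to decompose both inputs using the adapted curved-box partition $\mathcal{Q}(\mu,\nu,\lambda)$ of Definition \ref{D:FreqDeco} (recall each box has tangential size $\sim \mu$ and radial thickness $\sim \nu/\lambda$), apply the improved bilinear Strichartz estimate of Lemma \ref{L:BilinearEstimate} to each pair of boxes, and then sum using an almost-orthogonality argument.

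Concretely, I would write $\phi_\lambda = \sum_{R_1} \phi_{R_1}$ and $\psi_\lambda = \sum_{R_2} \psi_{R_2}$ with $R_1,R_2 \in \mathcal{Q}(\mu,\nu,\lambda)$, so that
\[
P_\mu \qo_\nu(\bar\phi_\lambda \psi_\lambda) = \sum_{(R_1,R_2) \in S} P_\mu \qo_\nu(\bar\phi_{R_1}\psi_{R_2}),
\]
where $S$ denotes the set of pairs producing non-negligible output after $P_\mu \qo_\nu$. A short geometric computation shows that for $(R_1,R_2)\in S$ the centers $\xi_1^0, \xi_2^0$ must satisfy $|\xi_1^0 - \xi_2^0|\sim \mu$ (from the $P_\mu$ constraint) and $|\,|\xi_1^0|^2-|\xi_2^0|^2|\sim \nu$ (from $\qo_\nu$), which, since $|\xi_j^0|\sim \lambda$, forces the radial difference to be $\sim \nu/\lambda$ and the angular difference to be $\sim \mu/\lambda$. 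Consequently each $R_1$ pairs with only $O(1)$ boxes $R_2$, and vice versa.

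For a fixed pair $(R_1,R_2)\in S$, I would apply Lemma \ref{L:BilinearEstimate} with $\Omega_j = R_j$. Here $\operatorname{dist}(R_1,R_2)\sim \mu$, while for the $\mathcal{H}^1$ quantity I note that, once $\xi = \xi_1-\xi_2$ and $\tau = |\xi_1|^2-|\xi_2|^2$ are fixed, the relation $2\xi\cdot\xi_1 = \tau + |\xi|^2$ constrains $\xi_1$ to a line perpendicular to $\xi$; this line meets $R_1$ in a segment of length at most the radial thickness $\sim \nu/\lambda$ (the generic direction of $\xi$ is tangential because $\nu/\lambda \leq \mu$). The lemma then yields
\[
\|P_\mu \qo_\nu(\bar\phi_{R_1}\psi_{R_2})\|_{L^2} \lesssim \left(\frac{\nu/\lambda}{\mu}\right)^{1/2}\|\phi_{R_1}\|_{L^2}\|\psi_{R_2}\|_{L^2} = \left(\frac{\nu}{\mu\lambda}\right)^{1/2}\|\phi_{R_1}\|_{L^2}\|\psi_{R_2}\|_{L^2}.
\]

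For the summation, I use the bounded bipartite structure of $S$ together with (near-)orthogonality of the outputs in Fourier space: the spatial Fourier support of $\bar\phi_{R_1}\psi_{R_2}$ lies in $R_2-R_1$, and pairs in $S$ whose $(R_2-R_1)$ translates disagree produce orthogonal contributions. Organizing the sum this way and applying Cauchy--Schwarz with the $O(1)$ neighbor bound gives
\[
\|P_\mu \qo_\nu(\bar\phi_\lambda\psi_\lambda)\|_{L^2}^2 \lesssim \sum_{(R_1,R_2)\in S} \|P_\mu \qo_\nu(\bar\phi_{R_1}\psi_{R_2})\|_{L^2}^2 \lesssim \frac{\nu}{\mu\lambda} \|\phi_\lambda\|_{L^2}^2\|\psi_\lambda\|_{L^2}^2,
\]
using the $L^2$-orthogonality $\sum_{R_j} \|\phi_{R_j}\|_{L^2}^2 = \|\phi_\lambda\|_{L^2}^2$. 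The main obstacle is making this almost-orthogonality precise, since pairs whose outputs occupy overlapping Fourier cells must be bundled carefully; the box sizes were chosen exactly so that the overlap multiplicity is $O(1)$, which is what keeps the sum from losing logarithms.
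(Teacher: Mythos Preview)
Your proposal is correct and follows essentially the same approach as the paper: both use the $\mathcal{Q}(\mu,\nu,\lambda)$ box decomposition, the $O(1)$ pairing count from the $P_\mu$ and $\qo_\nu$ constraints, Lemma~\ref{L:BilinearEstimate} on each surviving pair with $\mathrm{dist}(R_1,R_2)\sim\mu$ and $\mathcal{H}^1$-length $\sim \nu/\lambda$, and then $L^2$-orthogonality to sum. The only cosmetic difference is that the paper computes the angle $\rho$ between the constraint line and the long side of $R$ explicitly ($|\cos\rho|\sim \nu/(\mu\lambda)$) rather than arguing, as you do, that $\xi$ is predominantly tangential; both lead to the same intersection bound.
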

\begin{proof}
The frequency restriction $P_{\mu}$ applied to $P_R \bar{\phi}_\lambda P_{R^\prime} \psi_\lambda$
restricts us to looking at the subcollection of boxes $R, R^\prime \in \cQ$
separated by a distance $\sim \mu$. This subcollection is further restricted by
the temporal frequency multiplier $\qo_{\nu}$.
Let $\xi_1 \in R, \xi_2 \in R^\prime$.
The modulation $\tau$ of the product
$P_R \bar{\phi}_\lambda P_{R^\prime} \psi_\lambda$ is given by
\[
| \xi_1 |^2 - | \xi_2 |^2
=
(\xi_1 - \xi_2) \cdot (\xi_1 + \xi_2)
\]
Because we apply $P_\mu$, $| \xi_1 - \xi_2 | \sim \mu$,
and therefore necessarily $\tau$ lies in the range $ |\tau| \lesssim \mu \lambda$.
We write $|\tau| \sim \mu \lambda \cos \theta$, where $\theta$ is the angle between
$\xi_1 - \xi_2$ and $\xi_1 + \xi_2$.
Applying $Q_{\nu}$ restricts $\tau$ so that
$\lvert \tau \rvert \sim \nu$ and in particular
$\lvert \cos \theta \rvert \sim \nu / (\mu \lambda)$.

These restrictions motivate defining the set of interacting
  pairs of boxes $\cP = \cP(\mu, \nu, \lambda)$ as the collection of all pairs
$(R, R^\prime) \in \cQ \times \cQ$ ($\cQ = \cQ(\mu, \nu,
\lambda)$) for which all $(\xi_1,
\xi_2) \in R \times R^\prime$ satisfy $| |\xi_1|^2 - |\xi_2|^2 | \sim
\mu$ and $|\xi_1 + \xi_2| \sim \nu$.

Note that, for $R \in \cQ(\mu, \nu, \lambda)$ fixed, the number $p$ of
interacting pairs of boxes $P \in \cP(\mu, \nu, \lambda)$ containing $R$ is
O(1) uniformly in $\mu, \nu, \lambda$.  This is a consequence of the
restrictions $\lvert \cos \theta \rvert \sim \nu / (\mu \lambda)$ and
$\lvert \xi \rvert \sim \mu$: they jointly enforce at most $O(1)$
translations of a distance $\sim \nu / \lambda$, which is precisely
the scale of the short sides of the boxes.
%In other words, if one box in a pair is taken as fixed, then
%the positional uncertainty in frequency space of the remaining box induced by the cutoffs
%coincides with the dimensions of the box.

It remains only to show that for $(R, R^\prime) \in \cP$ we have
\begin{equation}
\sup_{\xi, \tau}
\int_{ \substack{\xi = \xi_1 - \xi_2 \\ \tau = \lvert \xi_1 \rvert^2 - \lvert \xi_2 \rvert^2}}
\chi_{R}(\xi_1) \chi_{R^\prime}(\xi_2)
d\mathcal{H}^1(\xi_1, \xi_2)
\lesssim \frac{\nu}{\lambda}
\end{equation}
Fix $\xi \in \R^2, \tau \in \R$, $\xi \neq 0$,
and consider the constraint equations
\begin{equation}
\begin{cases}
\xi &= \xi_1 - \xi_2 \\
\tau &= \lvert \xi_1 \rvert^2 - \lvert \xi_2 \rvert^2
\end{cases}
\end{equation}
These determine a line in $\R^2$:
\begin{equation*}
\tau = (\xi_1 - \xi_2) \cdot (\xi_1 + \xi_2) = -\xi \cdot (\xi - 2 \xi_1)
\end{equation*}
Suppose this line intersects $R$.
The angle $\rho$ that it forms with the long side length of $R$
satisfies $\lvert \cos \rho \rvert \sim \nu / (\mu \lambda)$ due to the modulation
constraint (note that at the scale of these boxes, the effects of curvature can be neglected).
Since the long side of $R$ has length $\sim \mu$
and the short side length $\sim \nu / \lambda$, it follows that the
total intersection length is $O(\nu / \lambda)$.
\end{proof}

\subsection{Extensions}
Now we extend the bilinear estimates to $U^2_\Delta$ and $\due$ functions; 
since these extensions are valid for both spaces, we simplify the notation to $U^2$.

Our first application of this proposition is in observing that (\ref{Bourgain}) of Corollary \ref{C:Bourgain}
extends to $U^2$ functions. This follows easily from the atomic decomposition.
\begin{cor}\label{C:BourgainExt}
Let $\phi_\mu, \psi_\lambda \in U^2$ be respectively
localized in frequency to $I_\mu$ and $I_\lambda$,
$\mu\ll\lambda$. Then
\begin{equation}\label{BourgainExt}
\lVert \bar{\phi}_\mu \psi_\lambda \rVert_{L^2} \lesssim
\frac{\mu^{1/2}}{\lambda^{1/2}} \lVert \phi_\mu
\rVert_{U^2} \lVert \psi_\lambda \rVert_{U^2}
\end{equation}
\end{cor}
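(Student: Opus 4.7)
The plan is to reduce the estimate to the case where $\phi_\mu$ and $\psi_\lambda$ are individual $U^2$-atoms, then exploit the pointwise time-disjointness built into each atom in order to line up an $L^2$ orthogonality that matches perfectly with the free-wave bilinear bound in Corollary~\ref{C:Bourgain}(a).

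By bilinearity of $(\phi,\psi)\mapsto \bar\phi\psi$ together with the definition of the atomic norm, it is enough to produce the inequality
\[
\|\bar a\, b\|_{L^2_{t,x}} \lesssim \frac{\mu^{1/2}}{\lambda^{1/2}}
\]
whenever $a$ is a $U^2_\Delta$-atom with frequency support in $I_\mu$ and $b$ is a $U^2_\Delta$-atom with frequency support in $I_\lambda$. Given such a bound, any atomic representations $\phi_\mu = \sum_i \alpha_i a_i$, $\psi_\lambda = \sum_k \beta_k b_k$ yield, by the triangle inequality,
\[
\|\bar\phi_\mu \psi_\lambda\|_{L^2} \;\lesssim\; \frac{\mu^{1/2}}{\lambda^{1/2}} \Big(\sum_i |\alpha_i|\Big)\Big(\sum_k |\beta_k|\Big),
\]
and taking the infimum over representations recovers the $U^2$ norms on the right-hand side.

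For the atomic estimate itself, write
\[
a(t) = \sum_j \chi_{I_j}(t)\, e^{it\Delta} f_j, \qquad b(t) = \sum_k \chi_{J_k}(t)\, e^{it\Delta} g_k,
\]
with $\sum_j \|f_j\|_{L^2}^2 = \sum_k \|g_k\|_{L^2}^2 = 1$ and the intervals $\{I_j\}$ (resp.\ $\{J_k\}$) pairwise disjoint. Then $\bar a\, b$ is a pointwise sum of the mutually time-disjoint pieces $\chi_{I_j\cap J_k}(t)\,\overline{e^{it\Delta}f_j}(x)\,e^{it\Delta}g_k(x)$, so
\[
\|\bar a\, b\|_{L^2_{t,x}}^2 \;=\; \sum_{j,k} \bigl\|\chi_{I_j\cap J_k}\,\overline{e^{it\Delta}f_j}\, e^{it\Delta}g_k\bigr\|_{L^2}^2.
\]
Dropping each cutoff $\chi_{I_j\cap J_k}$ and applying Corollary~\ref{C:Bourgain}(a) to the resulting pair of full free waves (which retain the frequency supports on $I_\mu$ and $I_\lambda$) gives each summand the bound $(\mu/\lambda)\|f_j\|_{L^2}^2\|g_k\|_{L^2}^2$, and the double sum collapses to $\mu/\lambda$ by the atomic normalization.

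There is no serious obstacle: the argument is standard once one notices that the atomic structure produces an exact pointwise-in-time decoupling, which is what converts the $L^2$ bilinear bound for a single pair of free waves into the desired bound on a pair of step-function superpositions. The same scheme applies verbatim with $U^2_\Delta$ replaced by $\due$ under the authors' common-notation convention, the only change being that the "free waves" are now lateral evolutions and one invokes the lateral analogue of Bourgain's bilinear estimate.
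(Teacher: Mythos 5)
Your argument is correct and is exactly what the paper intends by its one-line proof (``This follows easily from the atomic decomposition''): reduce to atoms, then use the pointwise-in-time disjointness of the step pieces to get $L^2$ orthogonality, apply Corollary~\ref{C:Bourgain}(a) to each piece, and sum. Two small remarks: an arbitrary atomic representation of $\phi_\mu$ need not consist of atoms frequency-localized to $I_\mu$, so one should first apply $P_{I_\mu}$ to each atom, which produces sub-normalized step functions that now carry the required localization while leaving the estimate unchanged; and for the lateral spaces $\due$ no separate ``lateral Bourgain estimate'' is needed, since the lateral free waves supported in $A_\e$ are themselves Schr\"odinger free waves (via the factorization of $\tau-\xi^2$), so Corollary~\ref{C:Bourgain} applies directly once the same disjoint-support orthogonality is run in the $x\cdot\e$ variable.
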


We may similarly conclude the following.
\begin{cor}\label{C:BilinearEstimateExt}
Let $\phi_1, \phi_2 \in U^2$ be respectively localized in frequency to
$\Omega_1$ and $\Omega_2$, where $\Omega_1, \Omega_2 \subset I_\lambda$.
Assume that for all $\xi_1 \in \Omega_1$ and $\xi_1 \in \Omega_2$ we have
\begin{equation*}
| \xi_1 \wedge \xi_2 | \sim \beta.
\end{equation*}
Then
\begin{equation}\label{BilinearEstimateExt}
\lVert \phi_1 \bar{\phi}_2 \rVert_{L^2_{s, t, x}} \lesssim \beta^{-1/2}
\lVert \phi_1 \rVert_{U^2} \lVert \phi_2 \rVert_{U^2}
\end{equation}
\end{cor}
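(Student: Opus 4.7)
The plan is to reduce to the free-wave estimate Corollary~\ref{C:BilinearEstimate} via the atomic decomposition of $U^2$, exactly in the spirit of how Corollary~\ref{C:BourgainExt} is obtained from \eqref{Bourgain}. By bilinearity and the definition of the $U^2$ norm (taking the infimum over atomic representations), it suffices to prove the estimate when $\phi_1$ and $\phi_2$ are $U^2$-atoms. Moreover, since frequency projections to $\Omega_1$ and $\Omega_2$ are bounded on $U^2$, we may assume the constituent steps of each atom are themselves Fourier-supported in $\Omega_1$, resp.\ $\Omega_2$.

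Write the atoms as step functions of free Schr\"odinger evolutions,
\[
\phi_1 = \sum_{k=1}^{K_1} \chi_{I_k^{1}}(t)\, e^{it\Delta} f_k, \qquad
\phi_2 = \sum_{l=1}^{K_2} \chi_{I_l^{2}}(t)\, e^{it\Delta} g_l,
\]
with $\sum_k \|f_k\|_{L^2}^2 \le 1$ and $\sum_l \|g_l\|_{L^2}^2 \le 1$, and where the $I_k^{1}$ (resp.\ $I_l^{2}$) are pairwise disjoint. Then
\[
\phi_1\bar\phi_2 = \sum_{k,l} \chi_{I_k^{1}\cap I_l^{2}}(t)\, (e^{it\Delta} f_k)\overline{(e^{it\Delta} g_l)}.
\]
The key observation is that the time intervals $\{I_k^{1}\cap I_l^{2}\}_{k,l}$ are pairwise disjoint: if $(k,l)\neq(k',l')$ then either $k\neq k'$ or $l\neq l'$, and in either case the containing intervals on one side are disjoint. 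Therefore the pieces are orthogonal in $L^2_{t,x}$ and
\[
\|\phi_1\bar\phi_2\|_{L^2}^2 = \sum_{k,l} \bigl\| \chi_{I_k^{1}\cap I_l^{2}} (e^{it\Delta} f_k)\overline{(e^{it\Delta} g_l)} \bigr\|_{L^2}^2.
\]

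Dropping the indicator on each summand (pointwise bound) and applying Corollary~\ref{C:BilinearEstimate} to each product of free waves, whose Fourier supports still satisfy $|\xi_1\wedge\xi_2|\sim\beta$, yields
\[
\bigl\| (e^{it\Delta} f_k)\overline{(e^{it\Delta} g_l)} \bigr\|_{L^2}^2 \lesssim \beta^{-1} \|f_k\|_{L^2}^2 \|g_l\|_{L^2}^2.
\]
Summing over $k,l$ and using the atom normalization gives $\|\phi_1\bar\phi_2\|_{L^2}^2 \lesssim \beta^{-1}$, which is the desired bound. Taking the infimum over atomic decompositions restores the general $U^2$ bound.

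For the parallel statement in the space $\due = |D|^{-1/2}U^2_\e$, the same argument applies verbatim, with the following adaptation: an atom in $|D|^{-1/2}U^2_\e$ is a step function in the $\e$-direction of solutions to the lateral flow $\partial_\e - iL_\e$, gaining a factor $\lambda^{-1/2}$ from $|D|^{-1/2}$ at frequency $\lambda$. Since in the Fourier region $A_\e$ these lateral flows coincide with free Schr\"odinger waves restricted to that region, Corollary~\ref{C:BilinearEstimate} applies to each bilinear piece, and the disjointness of half-spaces $\{x\cdot\e\in[x_{k-1},x_k)\}$ provides the needed orthogonality. The only mildly delicate point, and the one worth double-checking, is that the pairwise disjointness of the intersected atomic ``strips'' is preserved when one atom is vertical ($U^2_\Delta$) and the other lateral ($\due$); in that mixed case one appeals to either Fubini in time or Fubini in the $\e$-variable separately, so no new obstruction arises.
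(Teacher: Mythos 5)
Your atomic-decomposition strategy is the right one, and it is essentially what the paper intends (the paper simply says this corollary follows ``similarly'' to Corollary~\ref{C:BourgainExt}). However, there is a genuine gap in the step where you invoke Corollary~\ref{C:BilinearEstimate}. That corollary is stated for free waves $u(x,t) = e^{it\Delta}u_0$ and $v(x,s) = e^{is\Delta}v_0$ with \emph{independent} time variables, and the conclusion is a bound in $L^2_{s,t,x}$ (three time/space dimensions); this is what makes its proof a clean $\R^4\to\R^4$ change of variables with Jacobian $\sim|\xi_1\wedge\xi_2|$. You instead write both atoms as step functions of the same time $t$, form the single-time product $(e^{it\Delta}f_k)\overline{(e^{it\Delta}g_l)}$, measure it in $L^2_{t,x}$, and then cite Corollary~\ref{C:BilinearEstimate} to bound it by $\beta^{-1/2}\|f_k\|\|g_l\|$. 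That is not what the corollary says, and the single-time estimate with constant $\beta^{-1/2}$ is in general false: taking $\Omega_1,\Omega_2$ to be $\epsilon$-balls near $\lambda e_1$, separated by $\sim\epsilon$ in the $e_2$-direction (so $\beta\sim\lambda\epsilon$), the single-time $L^2_{t,x}$ norm computes to $\sim\epsilon^2\sim\|u_0\|\|v_0\|$, while $\beta^{-1/2}\|u_0\|\|v_0\|\sim\epsilon^{3/2}\lambda^{-1/2}$ is strictly smaller once $\epsilon\gg1/\lambda$. The single-time bound one actually has is Lemma~\ref{L:BilinearEstimate}, whose constant involves the coarea factor and is generally weaker than $\beta^{-1/2}$.

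The fix is to read the statement the way the $L^2_{s,t,x}$ notation (and the downstream use in Case 3c(i), Step 1, where the quantity $\|\bar\phi_{R_1}(x-y,t-s)\phi_{R_4}(t,x)\|_{L^2_{t,s,x}}$ is estimated) demands: $\phi_1=\phi_1(t,x)$ and $\phi_2=\phi_2(s,x)$ evolve in independent times. Then the atomic decomposition gives
\[
\phi_1(t,x)\,\overline{\phi_2(s,x)}
= \sum_{k,l} \chi_{I^1_k}(t)\,\chi_{I^2_l}(s)\,(e^{it\Delta}f_k)\,\overline{(e^{is\Delta}g_l)},
\]
whose summands are supported in the pairwise disjoint rectangles $I^1_k\times I^2_l$ in the $(t,s)$-plane. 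Orthogonality in $L^2_{s,t,x}$ is then automatic --- your intersection observation is not needed --- and you may drop the indicators and apply Corollary~\ref{C:BilinearEstimate} to each free-wave piece, exactly as you do, then sum. This also handles the mixed $U^2_\Delta$/$\due$ case you flag at the end without any extra Fubini discussion: the two step-structures live in the independent coordinates $t$ and $x\cdot\e$ (after passing to the lateral flow as around \eqref{xiereg}), so the product rectangles are again disjoint and orthogonality is immediate.
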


\begin{lem}\label{L:QmodL2}
Let $Q_1, Q_2 \in \{ Q_{\leq \nu_1}, Q_{\nu_2}, Q_{\geq \nu_3}, 1 : \nu_1, \nu_2, \nu_3 \; \text{dyadic}\}$.
Let $\phi_\mu, \phi_\lambda \in U^2 $ have respective frequency supports contained in
$\alpha$ boxes lying in $I_\mu$ and $I_\lambda$, where $\mu \ll \lambda$. Then
\begin{equation}\label{QmodL2}
\lVert Q_1 \phi_\mu \cdot Q_2 \phi_\lambda \rVert_{L^2} \lesssim
\frac{\alpha^{1/2}}{\lambda^{1/2}} \lVert \phi_\mu
\rVert_{U^2} \lVert \phi_\lambda \rVert_{U^2}
\end{equation}
\end{lem}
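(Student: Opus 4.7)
The plan is to extend the improved bilinear Strichartz estimate of Corollary~\ref{C:Bourgain}(b) to $U^2$ functions while also accommodating the modulation cutoffs $Q_1, Q_2$. Following the template of Corollary~\ref{C:BourgainExt}, I first use the atomic decomposition of $U^2$ to reduce the bound to the case where $\phi_\mu, \phi_\lambda$ are $U^2$ atoms, i.e., $\ell^2$-summable step functions of free Schr\"odinger evolutions (or, in the $\dve$ case, lateral evolutions) whose pieces retain frequency support in the prescribed $\alpha$-boxes of $I_\mu$ and $I_\lambda$.

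For the baseline case $Q_1 = Q_2 = 1$, the bilinear bound on each product of free-solution pieces is exactly the improved Strichartz estimate \eqref{Bourgain2}; these pieces are sharply localized in frequency to $\alpha$-boxes separated by a distance of order $\lambda$. The atomic sums are then controlled by Cauchy--Schwarz, since the frequency-support separation $\sim \lambda$ together with the disjointness of the time intervals in each atom gives almost orthogonality of the products across atoms, yielding the full $U^2 \times U^2$ bound with the same constant $\alpha^{1/2}/\lambda^{1/2}$.

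To incorporate $Q_1, Q_2$, I would use the conjugation $Q_N = e^{it\Delta} P_N^t e^{-it\Delta}$, so that on the straightened atom $e^{-it\Delta}\phi_\mu$ the modulation cutoff reduces to a plain time-frequency multiplier. This multiplier smooths the sharp characteristic functions in each atom without disturbing the spatial Fourier support. One then splits the bilinear product according to the output modulation: on the region consistent with the active cutoffs the analysis reduces to the $Q=1$ case treated above, and on the complementary region one invokes the embedding $V^2_\Delta \hookrightarrow \dot X^{0,1/2,\infty}$ from \eqref{uvbesov} to obtain $\|Q_N \phi\|_{L^2_{t,x}} \lesssim N^{-1/2}\|\phi\|_{V^2_\Delta}$, paired with an $L^2 \times L^\infty$ Hölder bound in which Bernstein on the other factor costs only $\alpha$ thanks to the $\alpha$-box localization.

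The main obstacle is orchestrating the modulation splittings so that the target rate $\alpha^{1/2}/\lambda^{1/2}$ is attained \emph{uniformly} across all combinations $Q_1, Q_2 \in \{Q_{\leq \nu_1}, Q_{\nu_2}, Q_{\geq \nu_3}, 1\}$. The key arithmetic is that each modulation restriction at scale $N$ contributes a factor $N^{-1/2}$ while Bernstein contributes at most $\alpha$; these losses are precisely compensated by the bilinear transversality gain from Corollary~\ref{C:Bourgain}(b), so that in every case the final constant collapses to $\alpha^{1/2}\lambda^{-1/2}$.
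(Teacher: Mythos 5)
Your plan for the baseline case $Q_1=Q_2=1$ is fine and matches the paper's use of the atomic decomposition together with the $\alpha$-box bilinear estimate \eqref{Bourgain2}. The gap is in the treatment of the modulation cutoffs, where you propose to conjugate $Q_N$ to a temporal multiplier and then split the bilinear product by output modulation, using $\|Q_N\phi\|_{L^2}\lesssim N^{-1/2}\|\phi\|_{V^2_\Delta}$ on one piece and the $Q=1$ case on the other. The $V^2$/Bernstein branch gives $\alpha\,\nu^{-1/2}$, which only dominates the target rate $\alpha^{1/2}\lambda^{-1/2}$ when $\nu\gtrsim\alpha\lambda$; for $\nu\lesssim\alpha\lambda$ you assert that the problem ``reduces to the $Q=1$ case,'' but that is exactly the hard part: $Q_{\leq\nu}$, $Q_\nu$, etc.\ are \emph{not} bounded on $U^2$ or $V^2$, so there is no legitimate way to simply absorb the cutoff and quote \eqref{Bourgain2}. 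The output-modulation split also does not align cleanly with input-modulation cutoffs $Q_1,Q_2$ on the two separate factors, and the claimed ``precise compensation'' between the $N^{-1/2}$ gain and the Bernstein loss is never verified uniformly across $\{Q_{\leq\nu_1},Q_{\nu_2},Q_{\geq\nu_3},1\}$.

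The paper's proof avoids all of this by a different and much cleaner observation: with the $\alpha$-box frequency localization in place, the multiplier $Q_{\leq\nu}$ is convolution by a space-time kernel $\tilde b(y,s)$ whose $L^1_{y,s}$ norm is bounded \emph{uniformly in $\nu$} (and in $\alpha$). Writing $Q_{\leq\nu}\phi_\lambda(x,t)=\int\tilde b(y,s)\,\phi_\lambda(x-y,t-s)\,dy\,ds$, one freezes the shift $(y,s)$, applies the bilinear estimate \eqref{Bourgain2} extended to $U^2$ to the shifted pair (this works because $U^2$ and the bilinear Strichartz bound are translation invariant), and then integrates in $(y,s)$ against $|\tilde b|$. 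This handles $Q_{\leq\nu_1}$ and $Q_{\nu_2}$ directly, and $Q_{\geq\nu_3}=1-Q_{<\nu_3}$ is recovered by the triangle inequality. The uniform kernel-integrability plus translation invariance is the mechanism that makes the constant independent of the cutoff scale; that mechanism is missing from your argument, and without it the low-modulation regime $\nu\lesssim\alpha\lambda$ is not controlled.
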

\begin{proof}
First consider the case where $Q_1 = 1$ and $Q_2$ is of the form $Q_{\leq \nu}$.
As $Q_2$ is a Fourier multiplier with (Schwartz) symbol
\[
b(\xi, \tau) := \chi((\tau - |\xi|^2)/\nu)
\]
we have
\[
Q_2 \phi_\lambda(x, t) = (\tilde{b} * \phi_\lambda)(x, t)
= \int \tilde{b}(y, s) \phi_\lambda(x - y, t - s) dy ds
\]
and so it follows that the left hand side of (\ref{QmodL2}) admits the representation
\[
\lVert \phi_\mu(x, t) \int \tilde{b}(y, s) \phi_\lambda(x - y, t - s) dy ds \rVert_{L^2_{x, t}}
\]
Suppose we freeze $y, s$ and consider
\[
\lVert \phi_\mu(x, t) \tilde{\phi}_\lambda(x - y, t - s) \rVert_{L^2_{x, t}}
\]
By replacing $\phi_\mu$ and the translated $\phi_\lambda$ with atoms,
we obtain by Lemma \ref{L:BilinearEstimate} and the fact that
the $U^2 $ spaces are translation invariant that
\[
\lVert \phi_\mu(x, t) \tilde{\phi}_\lambda(x - y, t - s)
\rVert_{L^2_{x, t}} \lesssim \frac{\alpha^{1/2}}{\lambda^{1/2}}
\lVert \phi_\mu \rVert_{U^2} \lVert \phi_\lambda
\rVert_{U^2}
\]
Since $\tilde{b}(x, t)$ is integrable with bound independent of $\nu$,
(\ref{QmodL2}) follows in this special case.

This argument clearly generalizes to
$Q_1, Q_2 \in \{ Q_{\leq \nu_1}, Q_{\nu_2}, 1 : \nu_1, \nu_2 \; \text{dyadic}\}$.
In order to accommodate $Q_{\geq \nu}$, we apply the above argument
to $1 - Q_{\geq \nu}$ and use the triangle inequality.
\end{proof}

\begin{lem}\label{bilinearUV}
Let $\phi_\mu, \phi_\lambda \in \ell^2 V^{2, \sharp}$
be respectively frequency localized to dyadic scales $\mu, \lambda$. Then
\begin{equation}\label{eq:bilinearUV}
\|\phi_\mu \phi_\lambda\|_{L^2}\lesssim
\frac{\mu^{1/2}}{\lambda^{1/2}}
(\log \mu )^2
\|\phi_\mu\|_{l^2 V^{2,\#}}\|\phi_\lambda\|_{V^{2,\#}}
\end{equation}
b) If either $\phi_\mu$ or $\psi_\lambda$ is further frequency
localized to space-time boxes of respective size $\alpha\times\alpha \times
\alpha \mu$ or $\alpha\times\alpha \times
\alpha \lambda$, then we have the better estimate
\begin{equation}\label{eq:bilinearUV1}
\|\phi_\mu \phi_\lambda\|_{L^2}\lesssim
\frac{\alpha^{1/2}}{\lambda^{1/2}}
(\log \mu )^2
\|\phi_\mu\|_{l^2 V^{2,\#}}\|\phi_\lambda\|_{V^{2,\#}}
\end{equation}
\end{lem}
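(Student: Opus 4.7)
The plan is to derive \eqref{eq:bilinearUV} by bilinear interpolation between a ``strong'' $U^2$-level estimate which carries the frequency gain $(\mu/\lambda)^{1/2}$ and a ``weak'' $U^q$-level estimate (with $q>2$) which has no such gain. The logarithmic factor $(\log\mu)^2$ will arise as the interpolation logarithm from Lemma~\ref{lem:interpol}, since the ratio of constants is $(\lambda/\mu)^{1/2}$ and in the range of interest $\log(\lambda/\mu) \lesssim \log\mu$. The $l^2_\mu$ structure on the first factor is then recovered at the end by partitioning $\phi_\mu$ using the spatial cutoffs $\chi_\mu^m$ from Section~4.3 and exploiting their almost orthogonality.

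Concretely, I would first establish the strong $U^{2,\sharp}$ bound
\[
\|\phi_\mu\phi_\lambda\|_{L^2} \lesssim \frac{\mu^{1/2}}{\lambda^{1/2}}\|\phi_\mu\|_{U^{2,\sharp}}\|\phi_\lambda\|_{U^{2,\sharp}}.
\]
Using the atomic decomposition of $U^{2,\sharp}$ in \eqref{u2sharp}, this reduces to four cases according to whether each factor is a $U^2_\Delta$-atom or a lateral $|D|^{-1/2}P_\e U^2_\e$-atom. The vertical--vertical case is exactly Corollary~\ref{C:BourgainExt}. The mixed cases follow from pairing a Strichartz bound for the vertical factor with the lateral local smoothing \eqref{est:u2localsmoothing} for the lateral factor, after an angular decomposition at scale $\mu/\lambda$. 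The purely lateral case is handled by replaying the proof of Corollary~\ref{C:BilinearEstimate} with the sideways half-wave operator $L_\e$ from the definition of $U^2_\e$ in place of $-\Delta$; the factorization $\xi^2-\tau\approx|\xi|(\xi_\e-\sqrt{\tau-\xi'^2_\e})$ on $A_\e$ shows that this yields the same $(\mu/\lambda)^{1/2}$ gain. For the weak bound I would take $q=4$ and apply H\"older together with the $L^4_{t,x}$ Strichartz estimate \eqref{est:UpStrichartz} and its lateral analogue, obtaining $\|\phi_\mu\phi_\lambda\|_{L^2} \lesssim \|\phi_\mu\|_{U^{4,\sharp}}\|\phi_\lambda\|_{U^{4,\sharp}}$ with no frequency gain.

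With these two bounds in hand, Lemma~\ref{lem:interpol}, applied componentwise to the atomic/lateral pieces that make up the $V^{2,\sharp}$-norm, yields the claim in the form without the spatial partition. To promote $\|\phi_\mu\|_{V^{2,\sharp}}$ to $\|\phi_\mu\|_{l^2_\mu V^{2,\sharp}}$, I would decompose $\phi_\mu=\sum_m \chi_\mu^m\phi_\mu$, observe that the supports of $\{\chi_\mu^m\phi_\mu\cdot\phi_\lambda\}_m$ are essentially pairwise disjoint, and apply the just-proven bilinear bound to each piece, invoking the time-truncation property \eqref{IXk} to absorb the time factor of $\chi_\mu^m$. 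Part (b) follows by the same procedure, with Corollary~\ref{C:BourgainExt} replaced by the sharper Corollary~\ref{C:Bourgain}(b) (and the corresponding lateral improvements via a finer sector decomposition of scale $\alpha$), so that the strong constant improves to $(\alpha/\lambda)^{1/2}$.

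The main obstacle is the purely lateral case in the strong bound, especially when the two lateral factors are associated to different directions $\e_1\neq\e_2$, where one cannot directly exploit a single sideways flow. Here one must either reduce to matched directions by an angular localization (paying the price of summing over $O(\log\mu)$ sectors, which is compatible with the logarithmic loss we can afford) or prove a universal bilinear estimate between $U^2_{\e_1}$ and $U^2_{\e_2}$ along the lines of Lemma~\ref{u2sharp-v2}. Either way a careful bookkeeping of the regions $A_\e$ and their interactions is required, and this is the step I expect to be the most delicate.
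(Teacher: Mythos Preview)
Your overall strategy (strong $U^2$ bound, weak $U^q$ bound, interpolate via Lemma~\ref{lem:interpol}) is the same as the paper's, but the execution differs in a way that produces the wrong logarithm. Your weak bound is $L^4\times L^4$ Strichartz with constant $C=1$, so the interpolation ratio is $C/C_2=(\lambda/\mu)^{1/2}$ and the resulting factor is $(\log(\lambda/\mu))^2$, not $(\log\mu)^2$. Your assertion that ``in the range of interest $\log(\lambda/\mu)\lesssim\log\mu$'' is simply false: nothing in the hypotheses ties $\lambda$ to a power of $\mu$.

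The paper obtains the sharper $(\log\mu)^2$ precisely by using the $l^2_\mu$ structure \emph{before} the interpolation rather than after. It first localizes $\phi_\mu$ to a single cube $\chi_\mu$ of size $\mu\times\mu\times 1$; then, after disposing of high modulations in $\phi_\lambda$ and localizing $\phi_\lambda$ to a small angle $\e$, it interpolates between the Bourgain bilinear bound (strong constant $(\mu/\lambda)^{1/2}$) and a weak bound built from $\phi_\lambda\in\lambda^{-1/2}L^{\infty,2}_\e$ (lateral smoothing) paired with $\chi_\mu\phi_\mu\in \mu^{O(1)}L^{2,\infty}_\e$ (Bernstein plus the cube support). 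The point is that the weak constant now depends only on $\mu$, so the ratio $C/C_2$ is a power of $\mu$ and the logarithm is genuinely $(\log\mu)^2$. Your scheme of recovering the $l^2_\mu$ structure only at the end by spatial almost-orthogonality cannot achieve this.

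Two further remarks. First, your four-case analysis of the strong $U^{2,\sharp}$ bound is more work than necessary: the paper reduces to the single case ($\phi_\lambda$ lateral, $\phi_\mu$ vertical) by the high-modulation/angular preprocessing, so the ``lateral--lateral'' obstacle you worry about never arises. Second, that obstacle is in any case not real: for lateral atoms in directions $\e_1,\e_2$ the spatial slabs $\{x\cdot\e_1\in I_i\}$ and $\{x\cdot\e_2\in J_j\}$ are jointly disjoint in $(x\cdot\e_1,x\cdot\e_2)$, so the same squared-sum argument that proves Corollary~\ref{C:BourgainExt} for vertical atoms goes through unchanged (this is what the paper means by ``follows easily from the atomic decomposition'' with the convention that $U^2$ stands for either space).
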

\begin{proof}
We first easily dispense with the high modulations ( $\gtrsim \mu \lambda$)
in $\phi_\lambda$ by using the trivial pointwise bound for $\phi_\mu $. Once $\phi_\lambda$
is restricted to small modulations we localize it to small angles. On the other hand,
for $\phi_\mu$ we take advantage of the $l^2$ structure to localize the estimate
to a cube of size $\mu \times \mu \times 1$, using a smooth cutoff $\chi_\mu$.
Thus it remains to estimate the expression $\chi_\mu \phi_\mu P_\e \phi_\lambda$.

Our starting point is the estimate (\ref{BourgainExt}) from Corollary
\ref{C:BourgainExt}, which yields the correct bound for $\phi_\lambda$
in $\due$ and $\phi_\mu$ in $U^2_\Delta$.
Next we put $\phi_\lambda$ in $|D|^{-\frac{1}{2}}U^p_\e$, which embeds into $\lambda^{-1/2}
L^{\infty, 2}_\e$, and $\phi_\mu$  in $V^p_\Delta$, which has the property that
$ \chi P_\mu V^p_\Delta \subset \chi \mu^2 L^\infty \subset   \mu^\frac52  L^{2, \infty}_\e$.
 Therefore by Lemma
\ref{lem:interpol}
\[
\lVert P_\e \phi_\lambda \chi_\mu \phi_\mu \rVert_{L^2}
\lesssim
\frac{\mu^{1/2}}{\lambda^{1/2}}
(\log \mu^{N-1/2} + 1)^2
\lVert \phi_\lambda \rVert_{V^2_\e} \lVert \phi_\mu \rVert_{V^2_\Delta}
\]

\end{proof}

\begin{lem}\label{C:bilinearL2-UV}
  Let $\mu, \nu, \lambda$ be dyadic frequencies satisfying $\mu
  \ll \lambda$ and $\mu^2 \lesssim \nu \lesssim \mu \lambda$.  Let
  $\phi_\lambda, \psi_\lambda$ be 
  %free 
  waves with frequency support
  contained in $I_\lambda$.  Then
\begin{equation}\label{L2biUV}
\lVert P_{\mu} \qo_{\nu} (\overline{P_\e Q_{\lesssim \mu \lambda} \phi}_\lambda \cdot
P_\e Q_{\lesssim \mu \lambda} \psi_\lambda)
\rVert_{L^2}
\lesssim
\frac{\nu^{1/2}}{(\mu \lambda)^{1/2}}
\lVert \phi_\lambda \rVert_{\due}
\lVert \psi_\lambda \rVert_{\due}
\end{equation}
where $\qo_\nu$ is replaced by $\qo_{\lesssim \mu^2}$ if $\nu
\leq \mu^2$.
\end{lem}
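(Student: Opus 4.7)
The extension from free Schr\"odinger waves (Corollary \ref{C:bilinearL2}) to $\due$ functions is carried out via the atomic decomposition of the lateral $U^2_\e$ space, in parallel with the passage from Lemma \ref{L:BilinearEstimate} to Corollary \ref{C:BourgainExt}. By bilinearity it suffices to treat the case where $\phi_\lambda, \psi_\lambda$ are $\due$-atoms: each has the form
\[
\phi_\lambda = |D|^{-\frac{1}{2}} \sum_k \chi_{[a_k, b_k)}(x \cdot \e)\, f_k,
\]
where the $f_k$ are lateral free waves (solutions of $(\partial_\e - i L_\e) f = 0$) with $\sum_k \|f_k(a_k)\|^2 \lesssim 1$ in the relevant Hilbert norm on $(x', t)$. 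The projection $P_\e Q_{\lesssim \mu\lambda}$ restricts us to the Fourier region $A_\e$, where the characteristic surface $\xi_\e = \sqrt{\tau - \xi'^2_\e}$ of the lateral evolution coincides with the Schr\"odinger parabola $\tau = |\xi|^2$; hence each $f_k$ may be identified with a Schr\"odinger free wave $e^{it\Delta} g_k$ frequency-localized in $A_\e \cap I_\lambda$. Since the $f_k$ are localized at spatial scale $\lambda$, the operator $|D|^{-\frac{1}{2}}$ contributes a factor $\lambda^{-\frac{1}{2}}$ per atom.

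The second step is to expand the product, schematically
\[
\bar\phi_\lambda \psi_\lambda \;\approx\; \lambda^{-1} \sum_{k, \ell} \chi_{[a_k, b_k)}(x\cdot\e)\, \chi_{[c_\ell, d_\ell)}(x\cdot\e)\, \bar f_k\, g_\ell,
\]
apply $P_\mu \qo_\nu$, and bound the $L^2$ norm of the resulting expression. For each fixed pair $(k, \ell)$, Corollary \ref{C:bilinearL2} applied to the underlying free-wave product $\bar f_k g_\ell$ yields the bound $(\nu/(\mu\lambda))^{1/2} \|g_k\|_{L^2}\|g_\ell\|_{L^2}$. We then sum over $(k,\ell)$ via Cauchy--Schwarz, exploiting the almost-orthogonality of step functions with disjoint supports in $x\cdot\e$. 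This orthogonality survives the application of $P_\mu \qo_\nu$ modulo rapidly decaying tails, since $P_\mu$ has a spatial kernel with rapid decay on the $\mu^{-1}$-scale, so that only near-diagonal $(k,\ell)$ pairs contribute. Combining the diagonal sum with the $\lambda^{-1}$ from the two $|D|^{-\frac{1}{2}}$'s absorbs correctly against the $g$-normalizations, yielding the claimed bound $\nu^{1/2}/(\mu\lambda)^{1/2}$.

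The main technical obstacle, and the source of the hypothesis $\mu^2 \lesssim \nu$, is the interaction between the sharp cutoffs $\chi_{[a_k, b_k)}(x\cdot\e)$ in the atomic decomposition and the modulation localization $\qo_\nu$ on the output. Concretely, convolution by $\hat\chi$ in the $\xi_\e$ variable shifts $\xi_\e$ off the parabola by an amount $\delta$, moving $\tau - |\xi|^2$ by $\sim \lambda\delta$; so the cutoffs inject modulation content at every scale, and this has to be absorbed by $\qo_\nu$. The smallest natural scale at which the cutoff-induced modulation matches the geometry of the output frequency localization $P_\mu$ is $\nu \sim \mu^2$, corresponding to waves propagating on the unit time-scale compatible with the $\mu^{-1}$ spatial support of the output kernel. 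Once $\nu \gtrsim \mu^2$, the cutoff perturbations are all captured by $\qo_\nu$ and the argument goes through cleanly; for $\nu < \mu^2$ the cutoff effects dominate, so that only the weaker statement with $\qo_{\lesssim \mu^2}$ in place of $\qo_\nu$ can be recovered, as stated. This sharp-cutoff versus modulation-localization tension is the technical heart of the extension.
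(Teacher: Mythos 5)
Your overall plan — pass from free waves to $U^2$-type functions via atomic decomposition — is the right general philosophy, but you apply it in the wrong order and the argument as written has a genuine gap. The crucial device in the paper's proof is Lemma~\ref{U2:sqsum} (square summability of $U^2$ norms under a frequency partition), which you do not invoke at all. The paper first uses this lemma to decompose each of $P_\e Q_{\lesssim \mu\lambda}\phi_\lambda$ and $P_\e Q_{\lesssim\mu\lambda}\psi_\lambda$ into pieces localized to frequency boxes of size $\mu\times\mu\times\nu$. Because the pieces are already frequency- and modulation-localized at the output scale, $P_\mu$ and $\qo_\nu$ can then be \emph{dropped} before one ever passes to atoms; after that, the extension of Corollary~\ref{C:bilinearL2} to $U^2_\e$ atoms is the routine step (an unweighted $L^2$ product norm is stable under sharp spatial cutoffs). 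You attempt the reverse order — first decompose into atoms, then apply the free-wave corollary to each pair $(k,\ell)$, then sum — and this is where the argument breaks.

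The specific gap is in your second paragraph. You write that for each pair $(k,\ell)$, Corollary~\ref{C:bilinearL2} applied to $\bar f_k g_\ell$ gives the bound $(\nu/\mu\lambda)^{1/2}\|g_k\|\|g_\ell\|$. But what actually appears in the expansion of $P_\mu\qo_\nu(\bar\phi_\lambda\psi_\lambda)$ is $P_\mu\qo_\nu\bigl(\chi_{[a_k,b_k)}(x\cdot\e)\chi_{[c_\ell,d_\ell)}(x\cdot\e)\,\bar f_k g_\ell\bigr)$, not $P_\mu\qo_\nu(\bar f_k g_\ell)$. The sharp cutoffs $\chi_{[a_k,b_k)}$ are not Fourier multipliers and do not commute with $\qo_\nu$, and since the $\nu^{1/2}$ gain in Corollary~\ref{C:bilinearL2} comes precisely from the modulation localization on the output, applying the corollary to $\bar f_k g_\ell$ and then reinserting the cutoffs is not legitimate. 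This is the very tension you describe in your third paragraph, but describing it is not resolving it — and no estimate there controls the smeared modulation content quantitatively. Your appeal to "almost-orthogonality of step functions" and "near-diagonal $(k,\ell)$" in the summation is also unsubstantiated: after $P_\mu$ smears on scale $\mu^{-1}$, intervals shorter than $\mu^{-1}$ (which a $U^2$ atom may have) mix over many pairs, and no off-diagonal decay estimate is given. Finally, your interpretation of the hypothesis $\mu^2\lesssim\nu$ as the threshold where "cutoff-induced modulation matches the geometry" is speculative and does not match the role it plays in the paper, where the constraint is tied to the $\mu\times\mu\times\nu$ box decomposition being compatible with the parabola geometry under the lateral $U^2_\e$ square-summability.
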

\begin{proof}
Notice that at frequency $\lambda$, the characteristic surface has slope $\lambda$. Once we apply the modulation localization $Q_{\lesssim\mu\lambda}$, $\phi_{\lambda}$ and $\psi_{\lambda}$ will be localized horizontally to boxes of size $\mu\times\mu$.

Using the square summability in Lemma~\ref{U2:sqsum} with respect to vertical rectangles
of size $\mu \times \nu$ in the $\e^\perp$ plane, 
and then the modulation localization,
the problem reduces  to the case of free waves which are localized in horizontally
and vertically separated cubes of size $\mu \times \mu \times \nu$. Then we can drop
both  $P_\mu$ and $\qo_\nu$, and apply the localized version of Lemma~\ref{C:bilinearL2}
extended to $U^2_\e$ atoms.

\end{proof}

%%%%%%%%%%%%%%%%%%%%%%%%%%%%%%%%%%%%%%%%%%%%%%%%%%%%%%%
%%%%%%%%%%                                                      
%%%%%%%%%%                      Estimates on A, B               
%%%%%%%%%%                                                    
%%%%%%%%%%%%%%%%%%%%%%%%%%%%%%%%%%%%%%%%%%%%%%%%%%%%%%%
\section{Estimates on $A$, $B$}

In this section we consider the system \eqref{A1A2Initial} for $A_1$
and $A_2$ under the assumption that $\phi$ is small in the space
$X^s$.  Unfortunately, it does not seem possible to directly obtain
estimates for $A_1,A_2$ which suffice in order to close the rest of
the argument. For this reason, we express $A_1,A_2$ in terms of the
milder variable $B_1,B_2$, which we can estimate in better norms and
treat perturbatively. Thus we end up solving the system \eqref{B1B2}
perturbatively.

We begin with estimates for the first two components of $A_1$, $A_2$ in
\eqref{A1A2Initial}.

\begin{lem}[Linear flow estimate] Assume that $\phi_0 \in H^s$. Then we have
\begin{equation} \label{e:aFlow}
\left<D\right>^sH^{-1}A_x(0)\in
L^{4}_{x,t}[0,1]\cap L^{2,6}_{\e}[0,1]
\end{equation}
\end{lem}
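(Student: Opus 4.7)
Since $H^{-1}$ applied to data at $t=0$ equals the heat semigroup $\mathbf{1}_{t\geq 0}e^{t\Delta}$, and $A_x(0) = \pm\tfrac{1}{2} \Delta^{-1}\partial_j|\phi_0|^2$, the problem reduces to showing
\[
\|e^{t\Delta} h\|_{L^4_{x,t}[0,1]} + \|e^{t\Delta} h\|_{L^{2,6}_\e[0,1]} \lesssim \|\phi_0\|_{H^s}^2, \qquad h := \langle D\rangle^s \Delta^{-1}\partial_j|\phi_0|^2.
\]
The plan is to decompose $h = P_{\leq 1} h + \sum_{N\geq 2} P_N h$ via Littlewood-Paley and treat the two regimes separately.

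For the high-frequency pieces $P_N h$ with $N \geq 2$, the symbol of $\langle D\rangle^s \Delta^{-1}\partial$ has size $\sim N^{s-1}$, and heat smoothing $e^{-ctN^2}$ localizes the effective time of integration to $t \lesssim N^{-2}$. I would further decompose spatially on the natural scale $N^{-1}$ via a partition of unity $\sum_\alpha \chi_\alpha = 1$; the heat-flowed pieces $F_\alpha = e^{t\Delta}(\chi_\alpha P_N h)$ have essentially disjoint space-time supports of volume $\sim N^{-4}$. Bernstein gives $\|F_\alpha\|_{L^\infty_{x,t}} \lesssim N \|\chi_\alpha P_N h\|_{L^2}$, which combined with the support volume yields the building-block estimate $\|F_\alpha\|_{L^4_{x,t}} + \|F_\alpha\|_{L^{2,6}_\e} \lesssim \|\chi_\alpha P_N h\|_{L^2}$. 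Disjoint-support orthogonality (using $\ell^2 \subset \ell^p$ for $p \in \{4,6\}$) then sums these to produce $\|e^{t\Delta} P_N h\|_{L^4_{x,t} \cap L^{2,6}_\e} \lesssim \|P_N h\|_{L^2}$. Dyadic Littlewood-Paley summation bounds the total high-frequency contribution by $(\sum_{N\geq 2}\|P_N h\|_{L^2}^2)^{1/2} \sim \||\phi_0|^2\|_{H^{s-1}}$. The Sobolev/duality step $\||\phi_0|^2\|_{H^{s-1}} \lesssim \|\phi_0\|_{H^s}^2$ closes the argument: testing against $\psi \in H^{1-s}$ and using the 2D embeddings $H^{1-s}\hookrightarrow L^{2/s}$ and $H^{s/2}\hookrightarrow L^{4/(2-s)}$ together with Hölder's inequality, one verifies the bound with room for every $s > 0$.

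The low-frequency piece $P_{\leq 1} h = \tilde K * |\phi_0|^2$ is handled via a mixed-norm Young inequality. The kernel $\tilde K$, defined as the Fourier inverse of $\psi_1 \langle\xi\rangle^s \xi_j/|\xi|^2$, is bounded near the origin and has $|x|^{-1}$ decay at infinity, hence $\tilde K \in L^4_x \cap L^2_{x_\e} L^6_{x'}$ by direct computation. Since $\||\phi_0|^2\|_{L^1} = \|\phi_0\|_{L^2}^2 \leq \|\phi_0\|_{H^s}^2$, Young's inequality provides the spatial bounds, and the time-integrated bounds on $[0,1]$ follow from the $L^p$- and mixed-norm contractivity of the heat kernel, with the Hardy-Littlewood maximal inequality controlling the sup-over-time needed for the lateral norm. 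The main obstacle is the $L^{2,6}_\e$ estimate, which is neither a standard Strichartz nor a local-smoothing norm; closing it requires combining frequency-localized disjoint-support orthogonality in the high-frequency regime with the mixed-norm convolution estimate in the low-frequency regime, both specific to the heat structure and the extra $\Delta^{-1}\partial$ smoothing built into $A_x(0)$.
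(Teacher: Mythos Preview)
Your proposal follows essentially the same strategy as the paper: split $h$ into high and low frequencies, use the product estimate $\||\phi_0|^2\|_{H^{s-1}}\lesssim\|\phi_0\|_{H^s}^2$ for the high part, and use the pointwise kernel bound $|K|\lesssim\langle x\rangle^{-1}$ together with Young's inequality against $|\phi_0|^2\in L^1$ for the low part.

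The only substantive difference is in the high-frequency step. The paper does not do a spatial partition or disjoint-support argument; it simply observes that $g:=\langle D\rangle^s P_{>0}A_x(0)\in L^2$, then invokes parabolic regularity to get $e^{t\Delta}g\in L^2_t\dot H^1_x\cap L^2_xL^\infty_t$ (the first from the energy identity, the second from the heat maximal inequality), and concludes via Sobolev-type embeddings into $L^4_{x,t}$ and $L^{2,6}_\e$. This is cleaner and avoids the one soft spot in your argument: your ``dyadic Littlewood--Paley summation'' in $\ell^2_N$ is fine for $L^4_{x,t}$ via the square function, but for the mixed lateral norm $L^{2,6}_\e$ the square-function characterization is not standard and you have not said how you sum. (It can be rescued using the essentially disjoint time supports $t\sim N^{-2}$, but the paper's route sidesteps the issue entirely.) For the low-frequency piece the paper bounds the \emph{full} space-time kernel $K(t,x)$ of $P_{<0}H^{-1}\Delta^{-1}\partial_x$ by $\langle x\rangle^{-1}$ uniformly in $t\in[0,1]$ (boundedness at the origin, stationary phase for the $x_j/|x|^2$ tail), then applies Young directly in the space-time norms; your detour through the time-zero kernel plus heat contractivity and a maximal inequality is correct but unnecessarily indirect.
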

\begin{proof}
We recall that $A_{1}(0) = \frac12 \Delta^{-1}  \partial_2 |\phi_0|^2$ and
$A_{2}(0) = - \frac12 \Delta^{-1}  \partial_1 |\phi_0|^2$. We split
the data into low and high frequency components. For the high frequency part
we have the multiplicative estimate
\[
\|\left<D\right>^s P_{>0} A_x(0)\|_{L^2} =
\|\left<D\right>^s \Delta^{-1}  \partial_x |\phi_0|^2 \|_{L^2} \lesssim \| \phi_0\|_{H^s}^2
\]
By parabolic regularity this gives
\[
\| \left<D\right>^s P_{>0} H^{-1} A_x(0)\|_{L^2_t \dot H^1_x \cap L^2_x L^\infty_t}
\lesssim \| \phi_0\|_{H^s}^2
\]
and the conclusion follows by Sobolev embeddings.

For the low frequency part  we can only use the straightforward estimate
\[
\||u_0|^2\|_{L^1} \lesssim \|u_0\|_{H^s}^2
\]
We write
\[
P_{<0} H^{-1} A_x(0) = K(t) \ast |\phi_0|^2
\]
where the kernel $K(t)$ of $\Delta^{-1} \partial_x H^{-1}P_{<0} $ is given by
\[
K(t)=\mathcal{F}^{-1}({\psi(|\xi|)\frac{\xi_j}{|\xi|^2}} e^{-t |\xi|^2})
\]
and  $\psi(\xi)$ is the symbol of the
Littlewood-Paley projection $P_{<0}$.
For $K(t,x)$,  we notice the following two facts:
\begin{itemize}
\item $|K(t,x)|\lesssim 1$. This is because
\[|K(t,x)|\lesssim \int\frac{|\xi_j|}{|\xi|^2}\psi(|\xi|)|\xi|d|\xi|\lesssim \int\psi(r)dr\]

\item  $K(t,x)\sim \frac{x_j}{|x|^2} $ as $x\rightarrow \infty$. In
radial coordinates, $(\xi_1,\xi_2)=(r\cos\theta, r\sin\theta)$,
and we can assume $\xi_j$ to be $\xi_1$. We write
\begin{eqnarray*} K(t,x)&=& \int_0^{2\pi}\int_0^\infty
\frac{ \psi(r)r\cos\theta}{r^2}e^{i r(x_1\cos\theta+x_2\sin\theta)}e^{-tr^2} rdr d\theta\\
&=& \int_0^{2\pi}\int_0^\infty
 \psi(r)\cos\theta e^{i|x| r(\frac{x_1}{|x|}\cos\theta+ \frac{x_2}{|x|}\sin\theta)}e^{-tr^2} dr d\theta
\end{eqnarray*}
By the method of stationary phase, we get the asymptotic
$\frac{x_j}{|x|^2}$.
\end{itemize}
Thus we have the uniform bound
\[
|K(t,x)| \lesssim \langle x \rangle^{-1}
\]
With this, we conclude that
\[
\|P_{<0} H^{-1} A_x(0)\|_{L^4_x}\lesssim
\|K(t,x)\|_{L^4}\||\phi_0|^2\|_{L^1_x}\lesssim \|u_0\|_{H^s}^2
\]
\[
\|P_{<0} H^{-1} A_x(0)\|_{L^{2,6}_{\textbf{e}}}\lesssim
\|K(t,x)\|_{L^{2,6}_{\textbf{e}}}\||\phi_0|^2\|_{L^1_x} \lesssim
\|u_0\|_{H^s}^2
\]
and the proof of the lemma is concluded.
\end{proof}

\begin{lem} \label{L:AxAngStrichartz}
We have the following bounds for the Littlewood-Paley pieces of
$H^{-1}(\bar{\phi} \, \partial_x\phi)$:
\begin{equation}\label{AxAng1}
\|H^{-1}P_{\lambda_3}(\bar{\phi}_{\lambda_1}\partial_x\phi_{\lambda_2})
\|_{\sum_\e L^{\infty,3}_\e}\lesssim
\|\phi_{\lambda_1}\|_{V^{2,\#}}\|\phi_{\lambda_2}\|_{V^{2,\#}},
 \qquad \lambda_1\sim\lambda_2\gg\lambda_3
\end{equation}
\begin{equation}\label{AxAng2}
\|H^{-1}P_{\lambda_3}(\bar{\phi}_{\lambda_1}\partial_x\phi_{\lambda_2})\|_{H^{-\frac{1}{2}}L^2_{x,t}}\! \! \lesssim
\|\phi_{\lambda_1}\|_{V^{2,\#}}\|\phi_{\lambda_2}\|_{V^{2,\#}},
\qquad \lambda_3 \sim\max\{\lambda_1, \lambda_2 \}
\end{equation}
In addition, if  $\lambda_3 \ll  \lambda_1\sim \lambda_2$, then
\begin{equation}\label{highmod-bi}
 \|H^{-1}P_{\lambda_3}\left(\bar{\phi}_{\lambda_1}\partial_x\phi_{\lambda_2}
- \qo_{\lesssim \lambda_1 \lambda_3}
(\overline{ Q_{\lesssim \lambda_1 \lambda_3}\phi}_{\lambda_1}\partial_x Q_{\lesssim \lambda_1 \lambda_3}\phi_{\lambda_2})\right)\| _{H^{-\frac{1}{2}}L^2_{x,t}}\! \! \lesssim
\|\phi_{\lambda_1}\|_{V^{2,\#}}\|\phi_{\lambda_2}\|_{V^{2,\#}}
\end{equation}
%Similar estimates hold for $H^{-1}(\phi \, \partial_x\bar \phi)$.
\end{lem}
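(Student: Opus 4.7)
The proof of the three bounds \eqref{AxAng1}, \eqref{AxAng2} and \eqref{highmod-bi} proceeds case by case, combining the bilinear estimates of Section~\ref{S:BilinearEstimates} with parabolic multiplier properties of $H^{-1}P_{\lambda_3}$ and $H^{-1/2}P_{\lambda_3}$. The two analytic facts I use throughout are the pointwise kernel bound $|K_{\lambda_3}(t,x)| \lesssim \lambda_3^{2}(1+\lambda_3|x|)^{-N}(1+\lambda_3^2|t|)^{-N}$ for $H^{-1}P_{\lambda_3}$ (as in the proof of Lemma~\ref{Hinftybound}) and the Fourier symbol bound $|(i\tau+|\xi|^2)^{-1/2}\psi_{\lambda_3}(\xi)| \lesssim \min(\lambda_3^{-1},|\tau|^{-1/2})$ for $H^{-1/2}P_{\lambda_3}$, which quantifies the parabolic smoothing gain.

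For \eqref{AxAng1}, after decomposing each $\phi_{\lambda_j}$ according to \eqref{v2sharp} into a $V^2_\Delta$ piece and lateral $|D|^{-1/2}P_{\e'}V^2_{\e'}$ pieces, I select a direction $\e$ admissible with respect to the angular sectors associated to all the $\e'$ appearing in either input, so that lateral smoothing in the $\e$ direction can be applied to each factor. The $L^{\infty,3}_\e$ control is then obtained by interpolating the pointwise $L^\infty = L^{\infty,\infty}_\e$ bound coming from Lemma~\ref{Hinftybound} with an $L^{\infty,2}_\e$ bound derived from the lateral smoothing inequality \eqref{est:u2localsmoothing} applied to each (atomic) input together with the $L^1 \to L^\infty$ mapping property of the kernel $K_{\lambda_3}$.

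For \eqref{AxAng2}, the symbol bound $\lambda_3^{-1}$ on $H^{-1/2}P_{\lambda_3}$ reduces matters to proving $\|\bar\phi_{\lambda_1}\partial\phi_{\lambda_2}\|_{L^2} \lesssim \lambda_3 \|\phi_{\lambda_1}\|_{V^{2,\#}}\|\phi_{\lambda_2}\|_{V^{2,\#}}$. When $\lambda_1 \ll \lambda_2 \sim \lambda_3$, the improved bilinear Strichartz of Corollary~\ref{C:BourgainExt} extended to $V^{2,\#}$ via Lemma~\ref{bilinearUV} supplies a factor $(\lambda_1/\lambda_2)^{1/2}$, and the derivative brings in $\lambda_2$. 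When $\lambda_1 \sim \lambda_2 \sim \lambda_3$, Strichartz provides $\|\phi_{\lambda_j}\|_{L^4_{t,x}} \lesssim \|\phi_{\lambda_j}\|_{V^{2,\#}}$ via \eqref{est:UpStrichartz} and Remark~\ref{rem:ext}, which together with H\"older and $\partial \sim \lambda_2$ yields the bound.

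Estimate \eqref{highmod-bi} rests on the geometric identity $\tau = (\xi_1+\xi_2)\cdot(\xi_1-\xi_2) + (\tau_1-|\xi_1|^2) - (\tau_2-|\xi_2|^2)$: when both input modulations are $\lesssim \lambda_1\lambda_3$ and the output spatial frequency is $\lambda_3$, this forces $|\tau| \lesssim \lambda_1\lambda_3$, so the piece $\qo_{\gtrsim \lambda_1\lambda_3}(\overline{Q_{\lesssim \lambda_1\lambda_3}\phi}_{\lambda_1}\,\partial Q_{\lesssim \lambda_1\lambda_3}\phi_{\lambda_2})$ arising in the expansion of the bracket is negligible (vanishing exactly upon suitable choice of thresholds in the $\lesssim$ and $\gtrsim$ cutoffs). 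The remaining terms in the expansion each carry at least one factor of the form $Q_{\gtrsim \lambda_1\lambda_3}\phi$, which by the embedding $V^2_\Delta \subset \dot X^{0,1/2,\infty}$ from \eqref{uvbesov} is bounded in $L^2_{t,x}$ with gain $(\lambda_1\lambda_3)^{-1/2}$; pairing the other factor via a bilinear $L^2$ estimate (Corollary~\ref{C:bilinearL2}) or $L^4_{t,x}$ Strichartz, and composing with the $\lambda_3^{-1}$ gain of $H^{-1/2}P_{\lambda_3}$, closes the estimate. The main obstacle throughout is \eqref{AxAng1}: the mixed-norm output requires an $\e$ compatible with the two-sided angular decomposition of the $V^{2,\#}$ inputs, and the interpolation between $L^\infty$ and $L^{\infty,2}_\e$ must be balanced against the derivative $\partial_x$ without incurring logarithmic losses in the dyadic summation over $\lambda_3$.
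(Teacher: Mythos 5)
Your overall strategy tracks the paper's (modulation splits, lateral energy estimates, parabolic smoothing from $H^{-1}P_{\lambda_3}$), and your argument for \eqref{AxAng2} is essentially the paper's, which simply bounds both factors in $L^4_{t,x}$ by Strichartz and uses the frequency-localized smoothing of $H^{-1/2}P_{\lambda_3}$; the improved bilinear Strichartz in the $\lambda_1\ll\lambda_2$ subcase is correct but unnecessary. However, there are two genuine gaps.

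For \eqref{AxAng1}, you propose to conclude by interpolating a pointwise $L^\infty$ bound (Lemma~\ref{Hinftybound}, which carries a factor $\lambda_3$) with an $L^{\infty,2}_\e$ bound. The arithmetic of that interpolation requires the $L^{\infty,2}_\e$ endpoint to carry a compensating gain $\lambda_3^{-1/2}$, which it does \emph{only} when both input factors are at low modulation, because it rests on the lateral energy estimate; the decomposition into small angular sectors and the $L^{\infty,2}_\e$ control simply are not available when an input is at high modulation. You never separate the high-modulation contributions, so the interpolation is applied to a function for which one endpoint fails. The paper handles this by a modulation dichotomy: for a high-modulation factor it uses $L^2\times L^4\to L^{4/3}$ and the parabolic Sobolev embedding $P_{\lambda_3}H^{-1}:L^{4/3}\to L^{\infty,3}_\e$, while the low-modulation piece is treated via the lateral energy $L^{\infty,2}_\e\times L^{\infty,2}_\e\to L^{\infty,1}_\e$ and then $P_{\lambda_3}H^{-1}:L^{\infty,1}_\e\to L^{\infty,3}_\e$. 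Your description of the $L^{\infty,2}_\e$ endpoint as coming from an ``$L^1\to L^\infty$'' mapping of the kernel is also off: the relevant step is the mixed-norm bound $L^{\infty,1}_\e\to L^{\infty,2}_\e$ of the kernel, not a Bernstein-type $L^1\to L^\infty$ estimate.

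For \eqref{highmod-bi}, your modulation identity and the observation that $\qo_{\gg\lambda_1\lambda_3}(\overline{Q_{\lesssim}\phi}\,\partial Q_{\lesssim}\phi)$ vanishes are correct and match the paper. But the proposed estimate for the surviving terms does not close. Taking one factor $Q_{\gtrsim\lambda_1\lambda_3}\phi_{\lambda_1}$ in $L^2$ with gain $(\lambda_1\lambda_3)^{-1/2}$ and the other in $L^4$ by Strichartz puts the product in $L^{4/3}$ with size $\lambda_2(\lambda_1\lambda_3)^{-1/2}$, and the ``$\lambda_3^{-1}$ gain'' of $H^{-1/2}P_{\lambda_3}$ is only its $L^2\to L^2$ operator norm — it does not convert an $L^{4/3}$ bound into an $L^2$ bound (and in fact the kernel of $H^{-1/2}P_{\lambda_3}$ fails to be in $L^4_{t,x}$ because of the slow $|\tau|^{-1/2}$ decay of the symbol). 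The net size is $(\lambda_1/\lambda_3)^{1/2}$, which diverges in the regime $\lambda_3\ll\lambda_1$. Likewise, Corollary~\ref{C:bilinearL2} is stated under the constraint $\nu\lesssim\mu\lambda$ and does not cover the high output-modulation range that arises here. The paper instead splits into (i) both inputs at high modulation, where $L^2\times L^2\to L^1$ is combined with $H^{-1/2}P_{\lambda_3}:L^1\to\lambda_3 L^2$, and (ii) one input plus the output at high modulation, where the low-modulation factor is put in $L^\infty_tL^2_x$ (not $L^4$) and the refined bound $H^{-1/2}P_{\lambda_3}\qo_{\gg\lambda_1\lambda_3}:L^2_tL^1_x\to(\lambda_3/\lambda_1)^{1/2}L^2$ supplies the missing $(\lambda_3/\lambda_1)^{1/2}$. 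You need both the energy norm and the modulation-restricted multiplier bound to make the power counting balance.
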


\begin{proof}
  For the first estimate we first dispense with the case when either
  factor has high modulation. Indeed, assume the first factor has high
  modulation. Then we have
\[
\|Q_{\gtrsim \lambda_1} \bar{\phi}_{\lambda_1}\partial_x\phi_{\lambda_2}\|_{L^{\frac43}}
\lesssim \|Q_{\gtrsim \lambda_1} \bar{\phi}_{\lambda_1}\|_{L^2}
\|\partial_x\phi_{\lambda_2}\|_{L^{4}} \lesssim
\|\phi_{\lambda_1}\|_{V^{2,\#}}\|\phi_{\lambda_2}\|_{V^{2,\#}}
\]
and the conclusion follows due to the parabolic Sobolev embedding
\[
P_\lambda H^{-1} : L^{\frac43} \to L^{\infty,3}_\textbf{e}
\]
Next we assume both factors have small modulations and localize the
two factors to small close angular sectors.  This is possible since
the output frequency is much lower. Then we choose a common admissible
direction $\textbf e$, and bound both factors in
$L^{\infty,2}_\textbf{e}$,
\[
\| P_\textbf{e} \bar{\phi}_{\lambda_1}\partial_x P_\textbf{e}\phi_{\lambda_2}\|_{
L^{\infty,1}_\textbf{e}} \lesssim  \|\phi_{\lambda_1}\|_{V^{2,\#}}\|\phi_{\lambda_2}\|_{V^{2,\#}}
\]
Then the conclusion follows due to the parabolic Sobolev embedding
\[
P_\lambda H^{-1} :  L^{\infty,1}_\textbf{e} \to L^{\infty,3}_\textbf{e}
\]

The second estimate \eqref{AxAng2}  is easier. We bound both factors in $L^4$ and use
the parabolic Sobolev embedding
\[
\lambda P_\lambda H^{-\frac12} :  L^2 \to L^2
\]

For the third estimate \eqref{highmod-bi} we need to consider two cases for the
terms in the difference:

(i) Both inputs have high modulation. Then we need to estimate
\[
H^{-1}P_{\lambda_3}
(\overline{ Q_{\gg \lambda_1 \lambda_3}\phi}_{\lambda_1}\partial_x
Q_{\gg \lambda_1 \lambda_3}\phi_{\lambda_2})
\]
Placing the two factors in $L^2$, we conclude using the bound
\[
H^{-\frac12}P_{\lambda_3}: L^1 \to \lambda_3 L^2
\]

(ii) One input and the output have high modulation. Then we need to estimate
\[
H^{-1}P_{\lambda_3} \qo_{\gg \lambda_1 \lambda_3}
(\overline{ Q_{\gg \lambda_1 \lambda_3}\phi}_{\lambda_1}\partial_x
\phi_{\lambda_2})
\]
Placing the first factor in $L^2$ and the second in $L^\infty
L^2$, we conclude using the bound
\[
H^{-\frac12}P_{\lambda_3}\qo_{\gg \lambda_1 \lambda_3}: L^2L^1
\to (\lambda_3/\lambda_1 )^\frac12 L^2
\]
\end{proof}

\begin{lem}
Assume that $\phi$ is small in $X^s$. Then the system \eqref{B1B2}
admits a unique solution $B_1,B_2$, depending smoothly on $\phi$, and
with regularity
\begin{equation} \label{e:B}
 \la D\ra^{s} B \in H^{-\frac{1}{2}}L^2_{x,t}[0,1]
\end{equation}

\end{lem}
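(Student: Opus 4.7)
The plan is to set up a contraction mapping argument for the system \eqref{B1B2} in the Banach space
\[
\mathcal{X}^s := \{ B : \langle D \rangle^s B \in H^{-\frac12} L^2_{x,t}[0,1] \},
\qquad \|B\|_{\mathcal{X}^s} := \|H^{\frac12} \langle D \rangle^s B\|_{L^2_{x,t}}.
\]
Writing the system schematically as $B = F + T B$, where the source
\[
F = H^{-1}\bigl( H^{-1} A_x(0) \cdot |\phi|^2\bigr) + H^{-1}\bigl( H^{-1}(\bar\phi \partial\phi) \cdot |\phi|^2\bigr)
\]
collects the terms involving only the data and $\phi$, and the linear operator
\[
T B = \pm H^{-1}(B \cdot |\phi|^2)
\]
acts on the unknown, it suffices to prove (i) $F \in \mathcal{X}^s$ with norm controlled by $\|\phi_0\|_{H^s}^2 \|\phi\|_{X^s}^2$, and (ii) $T : \mathcal{X}^s \to \mathcal{X}^s$ has operator norm $O(\|\phi\|_{X^s}^2)$. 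Smallness of $\phi$ in $X^s$ then yields existence, uniqueness, and smooth dependence by the contraction mapping theorem.

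To treat the source $F$, decompose dyadically in frequency and use the parabolic smoothing of the outer $H^{-1}$: applied to an output frequency $2^k$ localized piece the operator $H^{\frac12}H^{-1} = H^{-\frac12}$ loses essentially $2^{-k}$, which must be absorbed by putting the inner product into $L^2$ (or $L^p L^q$) with an appropriate weight. For the $H^{-1}A_x(0)$ piece we invoke the linear flow bound \eqref{e:aFlow}, which places $\langle D \rangle^s H^{-1} A_x(0)$ into $L^4_{x,t} \cap L^{2,6}_\e$; combining this with Strichartz estimates for $\phi$ (hence for $|\phi|^2$) inherited from the $X^s$ norm, a standard Littlewood-Paley trichotomy closes each case. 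For the term $H^{-1}(H^{-1}(\bar\phi \partial\phi) \cdot |\phi|^2)$ we apply the lateral bound \eqref{AxAng1} and the smoothing bound \eqref{AxAng2} from Lemma \ref{L:AxAngStrichartz} to the inner factor $H^{-1}(\bar\phi \partial\phi)$, again paired with Strichartz-type bounds for $|\phi|^2$.

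For the operator $T$, the definition of $\mathcal{X}^s$ embeds into suitable mixed space-time Lebesgue norms via Sobolev embedding in $x$ (and trivially in $t$, since we have an $L^2$ in time). Multiplying by $|\phi|^2$ pays $\|\phi\|_{X^s}^2$ via Strichartz, and the outer $H^{-1}$ again provides the $\frac12$ derivative of smoothing needed to land back in $H^{-\frac12} L^2_{x,t}$ weighted by $\langle D\rangle^{-s}$. The smallness of $\|\phi\|_{X^s}$ ensures $T$ is a contraction.

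The main obstacle is the high-high to low frequency interaction in the triple products of type $H^{-1}(\bar\phi \partial\phi) \cdot |\phi|^2$: here a naive product estimate loses because the inner parabolic factor only provides integrability but not summable decay in the output frequency. The resolution is exactly the sharper bound \eqref{highmod-bi}, which controls the difference between the inner term and its low-modulation truncation, together with the refined lateral bound \eqref{AxAng1} on small angular sectors; these estimates are designed to handle precisely this regime. All other cases are either high-low (favorable) or require only Strichartz and Bernstein.
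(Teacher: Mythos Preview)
Your sketch follows the paper's proof: contraction in $\mathcal{X}^s$, with the source bounded via \eqref{e:aFlow}, \eqref{AxAng1}, \eqref{AxAng2} and the $L^4$ Strichartz estimate, and the $H^{-1}(B|\phi|^2)$ term handled by the same argument (using $H^{-\frac12}L^2 \subset L^4$) to close the contraction.

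One clarification on the high $\times$ high $\to$ low interaction: the paper does not invoke \eqref{highmod-bi} here. The bound \eqref{AxAng1} already places the \emph{full} inner factor $H^{-1}P_{\lambda_3}(\bar\phi_{\lambda_1}\partial\phi_{\lambda_2})$ into $\sum_\e L^{\infty,3}_\e$, with no modulation truncation needed. The lack of decay in the output frequency $\lambda_3$ that you correctly flag is resolved not by \eqref{highmod-bi} but by the $\ell^2$ dyadic summability over $\lambda_1 \sim \lambda_2$ inherited from the $X^s$ norm, paired with the $\ell^2$ summability of the $L^4$ Strichartz bound for $\langle D\rangle^s\phi$, and the fixed-frequency embedding $P_\lambda H^{-\frac12}: L^{2,6/5}_\e \to L^2$. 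Your diagnosis of the obstacle is right; the resolution is simpler than you indicate.
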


\begin{rem}
Notice the following Sobolev embeddings hold:
\begin{align}
H^{-1}(L^{2, 6/5}_{\mathbf{e}})
&\hookrightarrow H^{-{\frac{1}{2}}}L^2_{t,x} \label{Sobem1}\\
H^{-{\frac{1}{2}}}L^2_{t,x}
&\hookrightarrow L^{4}_{t,x} \label{Sobem2} \\
H^{-{\frac{1}{2}}}L^2_{t,x} &\hookrightarrow L^{2,6}_{\mathbf{e}}
\label{Sobem3}
\end{align}
So $\left<D\right>^sH^{-1}(A_x \lvert \phi \rvert^2)\in
L^{4}_{t,x}\cap L^{2,6}_{\mathbf{e}}. $ This is also true for
$\left<D\right>^sH^{-1}P_{\lambda_3}(\bar{\phi}_{\lambda_1}\partial_x\phi_{\lambda_2})$
when $\lambda_3 \sim\max(\lambda_1, \lambda_2 )$.
\end{rem}

\begin{proof}
Let us recall the formula \eqref{B1B2} for $B$
\begin{equation*} 
\begin{split}
B_1 &=  H^{-1}( (H^{-1}A_2( 0) +H^{-1} [\Re(\bar{\phi}
\partial_1 \phi) + \Im(\bar{\phi}
\partial_1 \phi) ])|\phi|^2) + H^{-1} (B_2 |\phi|^2)
\\
B_2 & =  H^{-1}( (H^{-1}A_1( 0)-H^{-1} [\Re(\bar{\phi} \partial_2 \phi)+\Im(\bar{\phi} \partial_2\phi)] )|\phi|^2) -  H^{-1} (B_1 |\phi|^2)
\end{split}
\end{equation*}

We use the contraction principle to solve for $B$ in the space in the lemma.
We begin with the first term on the right. By \eqref{e:aFlow} we have
$ \la D\ra^{s} H^{-1}A_x(0)\in L^4_{x,t}[0,1]$.  Also by virtue of the Strichartz estimate
(\ref{est:UpStrichartz}), we have $\left<D\right>^{s}\phi \in
L^4_{x,t}$.
Then we obtain
\[
 \la D\ra^{s} H^{-1}\left( H^{-1}A_x(0) |\phi|^2 \right) \in H^{-1} L^{\frac43}
\subset H^{-\frac{1}{2}}L^2_{x,t}[0,1]
\]

By the second part of Lemma~\ref{L:AxAngStrichartz} and the embedding
$H^{-\frac12} L^2 \subset L^4$,   the same argument applies for the
$\mathrm{low} \times \mathrm{high} \to \mathrm{high}$ contribution in the second term in the  formula above,
as well as for the third term.

For the $\mathrm{high} \times \mathrm{high} \to \mathrm{low}$ contribution we need to use the first
 part of Lemma~\ref{L:AxAngStrichartz}, and so we only have
the $L^{\infty,3}_\textbf{e}$ norm available; however, a redeeming feature is that we
obtain $l^2$ dyadic summation. We also have  $l^2$ dyadic summation in the
$L^4$ bound for $ \la D\ra^{s} \phi$, and this is still sufficient in view of the embedding
\[
P_\lambda H^{-\frac12} : L^{2,\frac{6}5}_\textbf{e} \to L^2
\]
\end{proof}

%%%%%%%%%%%%%%%%%%%%%%%%%%%%%%%%%%%%%%%%%%%%%%%%%%%%%%%
%%%%%%%%%%                                                    
%%%%%%%%%%                      Cubic terms            
%%%%%%%%%%                                                     
%%%%%%%%%%%%%%%%%%%%%%%%%%%%%%%%%%%%%%%%%%%%%%%%%%%%%%%
\section{Cubic terms}
\label{s:cubic}

In this section, we focus on controlling the main cubic terms of the nonlinearity,
i.e., \eqref{N31}, \eqref{N32}, and \eqref{N33}.
The term $N_{3,3}$ is controlled using the $L^4$ Strichartz estimate.
Controlling $N_{3,1}$ and $N_{3,2}$ requires considerable additional work.
Our strategy is as follows. By duality, we can rephrase the estimate for $N_{3,1}$
as a quadrilinear estimate
\[
\left |\int N_{3,1}(\phi^{(1)},\phi^{(2)},\phi^{(3)},\phi^{(4)})\ dx dt \right|
\lesssim \|\phi^{(1)}\|_{X^{\#,s}} \|\phi^{(2)}\|_{X^{\#,s}} \|\phi^{(3)}\|_{X^{\#,s}}
\| \phi^{(4)}\|_{X^{-s}}
\]
where  by a slight abuse of notation we set
\[
N_{3,1}(\phi^{(1)},\phi^{(2)},\phi^{(3)},\phi^{(4)}) =
H^{-1}(\bar{\phi}^{(1)} \partial_1 \phi^{(2)}) (\bar{\phi}^{(3)} \partial_2 \phi^{(4)})
- H^{-1}(\bar{\phi}^{(1)} \partial_2 \phi^{(2)}) (\bar{\phi}^{(3)} \partial_1 \phi^{(4)})
\]
The same can be done for $N_{3,2}$. After one integration by parts, the
corresponding quadrilinear form is split into one part which is identical to the one above,
and one which is similar but with the complex conjugation on $\phi^{(4)}$ instead
of  $\phi^{(3)}$.  The arguments that follow apply equally to both cases.

Because the four input frequencies $\xi_j$ satisfy
\begin{equation}\label{freqcon}
\xi_1 - \xi_2 + \xi_3 - \xi_4 = 0,
\end{equation}
the four input frequencies $(\lambda_1,\lambda_2,\lambda_3,\lambda_4)$
are not completely independent. Two of these are essentially at the
same scale, $\lambda$, and dominate the remaining ones, whose scales
we now call $\mu_1$ and $\mu_2$.  We assume that $\mu_1 \leq \mu_2$.
Examining the balance of frequencies
coming from the $H^s$ norms in the estimate above, we have that the worst case is
when the high frequencies cancel and we only can use the low
frequency factors.  To the above we add a final frequency parameter $\alpha$
which is the $H^{-1}$ frequency. We denote by $ N_{3,1}^\alpha$ the expression obtained
from $N_{3,1}$ by replacing $H^{-1}$ by $P_\alpha H^{-1}$.

Relaxing also the $X^{\#,s}$ norms to $X^{s}$ and then retaining only the $l^2 V^{2,\#}$
component, we are left with proving the estimate
\begin{equation}\label{quad-diad}
  \left |\int N_{3,1}^\alpha(\phi^{(1)}_{\lambda_1},\phi^{(2)}_{\lambda_2},\phi^{(3)}_{\lambda_3},\phi^{(4)}_{\lambda_4})\ dx dt \right|
  \lesssim  \mu_2^s \|\phi^{(1)}_{\lambda_1}\|_{l^2 V^{2,\#}} \|\phi^{(2)}_{\lambda_2}\|_{l^2 V^{2,\#}} \|\phi^{(3)}_{\lambda_3}\|_{l^2 V^{2,\#}}
  \| \phi^{(4)}_{\lambda_4}\|_{l^2 V^{2,\#}}
\end{equation}
Here the $\lambda$ summation is produced from the $l^2$ dyadic
summations for the frequency $\lambda$ factors, while the $\mu_1$
and $\mu_2$ summations are satisfactory due to the frequency
factor above (applied with a smaller $s$). For $\alpha$ we must
have either $\alpha \approx \lambda$ or $\alpha \lesssim \mu_2$,
and so the $\alpha$ summation also yields at most $\log \mu_2$
acceptable losses.

We distinguish cases according to whether the highest two input frequencies
are balanced or unbalanced, where
we say that a pair $\phi_{\lambda_{2j-1}} \phi_{\lambda_{2j}}$ is \emph{balanced}
if $\lambda_{2j-1} \sim \lambda_{2j}$ and \emph{unbalanced} otherwise.
We consider cases in increasing order of difficulty:
\bigskip

{\bf Case 1: Unbalanced case,  $\alpha \approx \lambda$.}
In this case we must have at least one $\lambda$ frequency factor
in each of the two pairs. It suffices to consider the case where the derivatives
fall on the high-frequency $\lambda$ terms.
The null structure is not used, and so we need only bound
\begin{equation}\label{4lin-unbal}
\left\lvert \int
P_\lambda H^{-1}(\bar{\phi}_{\mu_1} \partial_1 \phi_\lambda)
(\bar{\phi}_{\mu_2} \partial_2 \phi_\lambda) \ dx dt  \right\rvert
\end{equation}

To each pair $\bar{\phi}_{\mu_j} \phi_\lambda$ we apply the bilinear estimate (\ref{bilinearUV}),
obtaining a combined bound of $(\mu_1 \mu_2)^{1/2} / \lambda$. The two derivatives
in (\ref{4lin-unbal}) are multipliers whose contribution is bounded by $\lambda^2$,
while $H^{-1}$ is a multiplier controlled here by $\lambda^{-2}$.
Therefore (\ref{4lin-unbal}) is bounded by $O(\mu_1^{1/2} \mu_2^{1/2} /\lambda)$,
and \eqref{quad-diad} follows.

\bigskip

{\bf Case 2: Balanced/unbalanced case, $\alpha \approx \mu_2 \ll
  \lambda$.}   Then we need to
consider the expression
\begin{equation}\label{4lin-unbal-a}
\left\lvert \int
P_{\mu_2} H^{-1}(\bar{\phi}_{\lambda} \partial_1 \phi_\lambda)
(\bar{\phi}_{\mu_1} \partial_2 \phi_{\mu_2}) \ dx dt  \right\rvert
\end{equation}
 We consider three cases:
\medskip

{\bf Case 2a:} One of the frequency $\lambda$ factors has high modulation
$\gtrsim \lambda^2$.  We bound that factor in $L^2$ and all others in $L^4$, using
\[
\mu P_\mu H^{-1} : L^\frac43 \to L^2
\]
Once this case is dealt with, we can localize both frequency $\lambda$ factors first to
small modulations and then  to a small angle.

\medskip
 {\bf Case 2b:} {\em Both $\phi_{\mu_1}$ and  $\phi_{\mu_2}$ have high modulations.}
Then we estimate
\[
I_{2b} =  \int
H^{-1}(P_{\e} \bar{\phi}_{\lambda} \partial_1 P_{\e} \phi_\lambda)
(Q_{> \mu_2^2} \bar{\phi}_{\mu_1} \partial_2Q_{> \mu_2^2} \phi_{\mu_2}) \ dx dt
\]
by
\[
\begin{split}
|I_{2b}|  \lesssim & \
\mu_2 \| P_{\mu_2} H^{-1}(P_{\e} \bar{\phi}_{\lambda} \partial_1 P_{\e} \phi_\lambda)\|_{L^\infty}
\|Q_{> \mu_2^2} \phi_{\mu_1} \|_{L^2} \|Q_{> \mu_2^2} \phi_{\mu_2} \|_{L^2}
\\
  \lesssim & \ \mu_2^2
 \| P_{\e} \bar{\phi}_{\lambda} \partial_1 P_{\e} \phi_\lambda)\|_{L^{\infty,1}_{\e}}
\|Q_{> \mu_2^2} \phi_{\mu_1} \|_{L^2} \|Q_{> \mu_2^2} \phi_{\mu_2} \|_{L^2}
\\
  \lesssim & \ \|\phi^{(1)}_{\lambda_1}\|_{V^{2,\#}} \|\phi^{(2)}_{\lambda_2}\|_{V^{2,\#}} \|\phi^{(3)}_{\lambda_3}\|_{V^{2,\#}}
  \| \phi^{(4)}_{\lambda_4}\|_{V^{2,\#}}
 \end{split}
\]

\medskip
{\bf Case 2c:} {\em One of $\phi_{\mu_1}$ and $\phi_{\mu_2}$ has low
modulation, say $\phi_{\mu_2}$}. We shift $H^{-1}$  to the
second product, which is localized at frequency $\mu_2$; then we 
have an expression of the form 
\[
P_{\mu_2} \bar H^{-1} (\phi_{\mu_1} Q_{\leq \mu_2^2}\phi_{\mu_2} ) 
\]
The symbol of $P_{\mu_2}\bar H^{-1} $ is smooth on the 
$\mu_2^2 \times \mu_2 \times \mu_2$ scale, which is the size 
of the frequency localization for $Q_{\leq \mu_2^2}\phi_{\mu_2}$.
Then using Fourier series in both $\xi$ and $\tau$, 
we can separate variables and replace the above expression by 
a rapidly convergent sum of the form
\[
\sum_j R_j^1 \phi_{\mu_1} R_j^2Q_{\leq \mu_2^2} \phi_{\mu_2}  
\]
where $R_j^1$ has size $(|\tau|+ \mu_2^2)^{-1}$ and dyadic regularity, 
and $ R_j^2$ is smooth on the scale of the frequency support of 
$Q_{\leq \mu_2^2}\phi_{\mu_2}$. Then we can simply discard the $ R_j^2$
factor and replace $R_j^1$ by a $\mu_2^{-2}$ factor.
Then in the product estimate
\[
I_{2c} =  \mu_2^{-2} \int
(P_{\e} \bar{\phi}_{\lambda} \partial_1 P_{\e} \phi_\lambda)
( \bar{\phi}_{\mu_1} \partial_2Q_{< \mu_2^2} \phi_{\mu_2}) \ dx dt
\]
we regroup terms and use two bilinear $L^2$ bounds (\ref{bilinearUV}) to obtain
\[
|I_{2c} | \lesssim (\mu_1/\mu_2)^\frac12 (\log \mu_2)^2 \|\phi^{(1)}_{\lambda_1}\|_{V^{2,\#}}
\|\phi^{(2)}_{\lambda_2}\|_{V^{2,\#}} \|\phi^{(3)}_{\lambda_3}\|_{V^{2,\#}}
  \| \phi^{(4)}_{\lambda_4}\|_{ V^{2,\#}}
\]
\bigskip

{\bf Case 3: Balanced case, $\alpha \ll \mu_2 \leq \lambda$.} Then we
must have $\mu_1 \approx \mu_2$; we denote both by $\mu$. We need to
estimate
\[
I_{3} = \int P_\alpha H^{-1}(\bar{\phi}_{\lambda} \partial_1
\phi_\lambda) ( \bar{\phi}_{\mu} \partial_2 \phi_{\mu}) - P_\alpha
H^{-1}(\bar{\phi}_{\lambda} \partial_2 \phi_\lambda) (
\bar{\phi}_{\mu} \partial_1 \phi_{\mu})\ dx dt
\]
The difficulty in this case is that by using linear and bilinear estimates
our losses are in terms of $\lambda$ and $\mu$, while our gains from
$H^{-1}$ are only in terms of $\alpha$. This is where our heat gauge
is most useful.  We begin by peeling off some easier high modulation cases.
The important modulation threshold for  $H^{-1}$ is $\alpha \lambda$.

\medskip

{\bf Case 3a.} {\em High ($ \gg \alpha \lambda$) modulation in  $H^{-1}$.}
This forces at least one comparable modulation in each of the pairs
of factors. Thus we need to consider expressions of the form
\[
I_{3a} =  \int
P_\alpha \qo_\nu  H^{-1}( \bar{\phi}_{\lambda} \partial_1 Q_{\gtrsim \nu}  \phi_\lambda)
( \bar{\phi}_{\mu} \partial_2 Q_{\gtrsim \nu} \phi_{\mu}) \ dx dt, \qquad \nu \gg \alpha \lambda
\]
To bound this we use $L^2$ for the high modulation factors, energy for the other two
and Bernstein at frequency $\alpha$. This gives
\[
|I_{3a}| \lesssim \frac{\alpha^2 \mu \lambda}{\nu^2}
 \|\phi^{(1)}_{\lambda_1}\|_{V^{2,\#}} \|\phi^{(2)}_{\lambda_2}\|_{V^{2,\#}} \|\phi^{(3)}_{\lambda_3}\|_{V^{2,\#}}
  \| \phi^{(4)}_{\lambda_4}\|_{V^{2,\#}}
\]
which suffices.

\medskip

{\bf Case 3b.} {\em Low ($ \lesssim \alpha \lambda$) modulation in $H^{-1}$
but high ($ \gg \alpha \lambda$) modulation in the frequency $\lambda$
factors.}  This forces at least one comparable modulation in each of
the pairs of factors.  For the frequency $\mu$ factors the high
modulations $\gtrsim \mu^2$ are easy to treat. Discarding those, we
use the relation $\alpha \ll \mu$ to localize to small angles.  Thus
we need to consider expressions of the form
\[
I_{3b} =  \int
P_\alpha \qo_{\lesssim \alpha \lambda}
  H^{-1}( \overline{ Q_{\gg \alpha \lambda} \phi}_{\lambda} \partial_1
Q_{\gg \alpha \lambda}  \phi_\lambda)
( P_\e \bar{\phi}_{\mu} \partial_2 P_\e \phi_{\mu}) \ dx dt
\]
To bound this we use $L^2$ for the high modulation factors. For the frequency $\mu$
factors we  use lateral energy with respect to the admissible direction $\e$.
This gives
\[
\begin{split}
|I_{3b}| \lesssim & \lambda  \| Q_{\gg \alpha \lambda} \phi_{\lambda}\|_{L^2}
 \| Q_{\gg \alpha \lambda} \phi_{\lambda}\|_{L^2} \| \bar H^{-1}P_\alpha ( P_\e \bar{\phi}_{\mu} \partial_2 P_\e \phi_{\mu})\|_{L^\infty}
\\
\lesssim &  \ \mu  \|\phi_{\lambda}\|_{V^{2,\#}}
\|\phi_{\lambda}\|_{V^{2,\#}} \|  P_e \bar{\phi}_{\mu}  P_\e
\phi_{\mu}\|_{L^{\infty,1}_\e}
\\
\lesssim & \
 \|\phi^{(1)}_{\lambda_1}\|_{V^{2,\#}} \|\phi^{(2)}_{\lambda_2}\|_{V^{2,\#}} \|\phi^{(3)}_{\lambda_3}\|_{V^{2,\#}}
  \| \phi^{(4)}_{\lambda_4}\|_{V^{2,\#}}
\end{split}
\]
which again suffices.

At this point we can restrict ourselves to low ($ \lesssim \alpha
\lambda$) modulations in both $H^{-1}$ and the frequency $\lambda$
factors.  The proof branches again into two cases depending on whether
$\mu \ll \lambda$ or $\mu \approx \lambda$.  These two cases have
similarities but also some significant differences. One such
difference is that in the latter case we can freely restrict ourselves to low
modulations in all four factors; this turns out to be very useful in our argument.

\medskip

{\bf Case 3c(i).} {\em Low ($ \lesssim \alpha \lambda$) modulations in
  both $H^{-1}$ and the frequency $\lambda$ factors, $\mu \ll
  \lambda$.}  Localizing to small angles in both frequency $\lambda$
factors, we need to consider the expression
\[
\begin{split}
I_{3c} =  & \ \int
P_\alpha \qo_{\lesssim \alpha \lambda}
  H^{-1}( \overline{ P_\e Q_{\lesssim \alpha \lambda} \phi}_{\lambda} \partial_1
P_\e Q_{\lesssim \alpha \lambda}  \phi_\lambda)
( \bar{\phi}_{\mu} \partial_2  \phi_{\mu})  \ dx dt
\\ & \ - \int
P_\alpha \qo_{\lesssim \alpha \lambda}
  H^{-1}( \overline{ P_\e Q_{\lesssim \alpha \lambda} \phi}_{\lambda} \partial_2
P_\e Q_{\lesssim \alpha \lambda}  \phi_\lambda)
( \bar{\phi}_{\mu} \partial_1  \phi_{\mu}) \ dx dt
\end{split}
\]
We will prove that
\begin{equation}\label{quadri}
|I_{3c}| \lesssim  (\log \mu)^4
\|\phi^{(1)}_{\lambda}\|_{V^{2,\#}} \|\phi^{(2)}_{\lambda}\|_{V^{2,\#}} \|\phi^{(3)}_{\mu}\|_{l^2 V^{2,\#}}
  \| \phi^{(4)}_{\mu}\|_{l^2 V^{2,\#}}
\end{equation}
in several steps:

\medskip

{\bf Case 3c(i), Step 1:} {\em Proof of \eqref{quadri} for free waves, no log loss.}
We use the orthogonal partitioning $\cQ(\alpha, \nu, \lambda)$ of $I_\lambda$
and $I_\mu$. Let $R_1, R_2, R_3, R_4$ be boxes belonging to this partition,
where $R_1, R_2$ are $\alpha$-separated at frequency $\lambda$
and $R_3, R_4$ are $\alpha$-separated at frequency $\mu$.
By the $L^2$-orthogonality of the partition, it suffices to estimate
\begin{equation}\label{4lin-rec}
\left\lvert
\int
H^{-1}(\bar{\phi}_{R_1} \partial_1 \phi_{R_2})
\bar{\phi}_{R_3} \partial_2 \phi_{R_4} \ dx dt
-
\int
H^{-1}(\bar{\phi}_{R_1} \partial_2 \phi_{R_2})
\bar{\phi}_{R_3} \partial_1 \phi_{R_4} \ dx dt  \right\rvert
\end{equation}
We now split into two subcases according to the strength of the null form.

\medskip

\textbf{Subcase I:}
Suppose that there is $\beta \gtrsim \alpha \lambda$ such that
$| \xi_2 \wedge \xi_4 | \sim \beta$ for all $\xi_2 \in R_2$ and $\xi_4 \in R_4$.
Then $| \xi_1 \wedge \xi_3 | \sim \beta$ for all $\xi_1 \in R_1$ and $\xi_3 \in R_3$.
Let $b(y, s)$ denote the kernel of $P_\alpha \qo_{< \alpha \lambda} H^{-1}$.
Then
\begin{equation}\label{b:bd}
|b(y, s)| \lesssim \alpha^{-2} (1+\alpha |y| + \alpha^2 |s|)^{-N}
\end{equation}
Our goal is to control
\[
\beta \left\lvert \int b(y, s)
(\bar{\phi}_{R_1}  \phi_{R_2})(x - y, t - s)
(\bar{\phi}_{R_3}  \phi_{R_4})(x, t) \
ds dt dx dy
\right\rvert
\]
This is bounded by
\[
\beta \int
\| \bar{\phi}_{R_1}(x - y, t - s) \phi_{R_4}(t,x)\|_{L^2_{t,s,x}}
\| \phi_{R_2}(x - y, t - s)\bar{\phi}_{R_3}(t,x)\|_{L^2_{t,s,x}}
\sup_s |b(y, s)| dy
\]
For the $L^2$ norm we use the bilinear estimate (\ref{BilinearEstimate}) twice,
uniformly in $y$; this yields a factor of $\beta^{-1}$. Also from \eqref{b:bd} we have
\[
\int \sup_s |b(y, s)| dy \lesssim 1
\]
Thus the conclusion follows.

\medskip

\textbf{Subcase II:} Suppose again that $| \xi_2 \wedge \xi_4 | \lesssim \alpha \lambda$
for all $\xi_2 \in R_2$ and $\xi_4 \in R_4$, but now $\mu \ll \lambda$.
Our goal is now to control
\[
\alpha \lambda   \left\lvert \int b(y, s)
(\bar{\phi}_{R_1}  \phi_{R_2})(x - y, t - s)
(\bar{\phi}_{R_3}  \phi_{R_4})(x, t) \
ds dt dx dy
\right\rvert
\]
This is bounded by
\[
\alpha \lambda  \int
\| \bar{\phi}_{R_1}(x - y, t - s) \phi_{R_4}(t,x)\|_{L^2_{t,x}}
\| \phi_{R_2}(x - y, t - s)\bar{\phi}_{R_3}(t,x)\|_{L^2_{t,x}}
 |b(y, s)| ds dy
\]
and the  argument is concluded by applying  \eqref{Bourgain} twice.

\medskip

{\bf Case 3c(i), Step 2:} {\em Proof of \eqref{quadri} for $U^2$ waves, no log loss.}
Precisely, we will show that
\begin{equation}\label{quadriU}
|I_{3c}| \lesssim
\|P_\e \phi^{(1)}_{\lambda}\|_{\due} \|P_\e \phi^{(2)}_{\lambda}\|_{\due}
\|\phi^{(3)}_{\mu}\|_{ U^{2}_\Delta}
  \| \phi^{(4)}_{\mu}\|_{U^{2}_\Delta}, \qquad \mu \ll \lambda
\end{equation}
For this we try to mimic the arguments for free waves. The symbol of
$P_{\alpha}H^{-1} \qo_{<\alpha \lambda}$ is localized in a region of
size $\alpha \times \alpha \times \alpha \lambda$. We claim we can
freely localize each of the four functions to similarly sized
frequency regions.  In the case of the $U^2_\Delta$ spaces for
frequency $\mu$ factors, only the frequency localization on the
$\alpha$ scale is used, and the square summability of the $U^2_\Delta$
norm with respect to $\alpha$ rectangles is due to
Lemma~\ref{U2:sqsum}.  In the case of the frequency $\lambda$ factors
we are using the lateral flow in the $\e$ direction. Thus by
Lemma~\ref{U2:sqsum} we obtain square summability with respect to
projectors associated to vertical rectangles of size $\alpha \times
\alpha \lambda$ in $\e^\perp$ directions. However, the $Q_{\lesssim
  \alpha \lambda}$ localization of $\phi^{(1)}$ and $\phi^{(2)}$
ensures that the above localization in $\e^\perp$ directions induces
also an $\alpha$ localization in the $\e$ direction (this is the same as in the proof of Lemma~\ref{C:bilinearL2-UV}) .Thus we have
arrived at the same situation as in \eqref{4lin-rec}.

From here on, the proof proceeds exactly as in the free case,
using the observation that the bilinear $L^2$
bounds \eqref{BilinearEstimate} and \eqref{Bourgain} extend in a
straightforward manner to $U^2$ functions.

\medskip

{\bf Case 3c(i), Step 3:} {\em Proof of \eqref{quadri} for $U^2$ $\lambda$ waves and $V^2$ $\mu$-waves, log loss.}
Precisely, we will show that
\begin{equation}\label{quadriUV}
|I_{3c}| \lesssim
(\log \mu)^2
\|\phi^{(1)}_{\lambda}\|_{\due} \|\phi^{(2)}_{\lambda}\|_{\due} \|\phi^{(3)}_{\mu}\|_{ l^2 V^{2,\sharp}}
  \| \phi^{(4)}_{\mu}\|_{l^2 V^{2,\sharp}}, \qquad \mu \ll \lambda
\end{equation}
The role of the $l^2$ structure here is to allow localization to the unit time scale.
Once this is done, we decompose
\[
\phi^{(3)}_{\mu} = Q_{< \mu^N} \phi^{(3)}_{\mu}+ Q_{> \mu^N} \phi^{(3)}_{\mu}
\]
For the first component, using the unit time localization, we have
\[
\|Q_{< \mu^N} \phi^{(3)}_{\mu}\|_{U^2_\Delta} \lesssim \log \mu \| \phi^{(3)}_{\mu}\|_{V^2_\Delta}
\]
and use \eqref{quadriU}.

For the second component we estimate directly the quadrilinear form by
\[
\begin{split}
|I_{3c}| \lesssim &\
\| P_\alpha H^{-1} ( \overline{ P_e Q_{\lesssim \alpha \lambda} \phi}_{\lambda} \partial_1
P_e Q_{\lesssim \alpha \lambda}  \phi_\lambda)\|_{L^\infty}
\| Q_{> \mu^N} \phi^{(3)}_{\mu}\|_{L^2} \| \partial_x  \phi^{(4)}_{\mu}\|_{L^2}
\\
\lesssim &\ \alpha \|  \overline{ P_e Q_{\lesssim \alpha \lambda} \phi}_{\lambda} \partial_1
P_e Q_{\lesssim \alpha \lambda}  \phi_\lambda)\|_{L^{\infty,1}_\e}
\mu^{-\frac{N}2} \| \phi^{(3)}_{\mu}\|_{V^2_\Delta} \mu \|  \phi^{(4)}_{\mu}\|_{V^2_\Delta}
\\
\lesssim &\ \alpha \mu^{1 -\frac{N}2}
\|\phi^{(1)}_{\lambda}\|_{\due} \|\phi^{(2)}_{\lambda}\|_{\due} \|\phi^{(3)}_{\mu}\|_{ l^2 V^{2,\sharp}}
  \| \phi^{(4)}_{\mu}\|_{l^2 V^{2,\sharp}}
\end{split}
\]

{\bf Case 3c(i), Step 4:} {\em Proof of \eqref{quadri}, conclusion.}
By the interpolation result in Lemma~\ref{lem:interpol} the estimate  \eqref{quadri}
follows from the bound \eqref{quadriUV} and the following estimate:
\[
|I_{3c}| \lesssim \alpha \mu \|\phi^{(1)}_{\lambda}\|_{|D|^{-\frac{1}{2}}U^p_\e}
\|\phi^{(2)}_{\lambda}\|_{|D|^{-\frac{1}{2}}U^{p}_\e} \|\phi^{(3)}_{\mu}\|_{ l^2
V^{2,\sharp}}
  \| \phi^{(4)}_{\mu}\|_{l^2 V^{2,\sharp}}
\]
where $p>2$.

This in turn is obtained as in the immediately preceding
computation but without any high modulation localization.

{\bf Case 3c(ii).} {\em Low ($ \lesssim \alpha \lambda$) modulation in
  both $H^{-1}$ and the frequency $\lambda$ factors, $\mu \approx
  \lambda$.}  Localizing to small angles in all four frequency $\lambda$
factors, here we need to consider the expression
\[
\begin{split}
I_{3c} =  & \ \int
P_\alpha \qo_{\lesssim \alpha \lambda}
  H^{-1}( \overline{ P_\e Q_{\lesssim \alpha \lambda} \phi}_{\lambda} \partial_1
P_\e Q_{\lesssim \alpha \lambda}  \phi_\lambda)
(\overline{ P_{\tilde \e} Q_{\lesssim \alpha \lambda}\phi}_{\mu} \partial_2
P_{\tilde \e} Q_{\lesssim \alpha \lambda}\phi_{\mu})  \ dx dt
\\ & \ - \int
P_\alpha \qo_{\lesssim \alpha \lambda}
  H^{-1}( \overline{ P_\e Q_{\lesssim \alpha \lambda} \phi}_{\lambda} \partial_2
P_\e Q_{\lesssim \alpha \lambda}  \phi_\lambda)
( \overline{ P_{\tilde \e} Q_{\lesssim \alpha \lambda} \phi}_{\mu} \partial_1
P_{\tilde \e} Q_{\lesssim \alpha \lambda}  \phi_{\mu}) \ dx dt
\end{split}
\]
We will prove that
\begin{equation}\label{quadri=}
|I_{3c}| \lesssim  (\log \mu)^5
\|\phi^{(1)}_{\lambda}\|_{V^{2,\#}} \|\phi^{(2)}_{\lambda}\|_{V^{2,\#}} \|\phi^{(3)}_{\mu}\|_{l^2 V^{2,\#}}
  \| \phi^{(4)}_{\mu}\|_{l^2 V^{2,\#}}
\end{equation}
in several steps:

\medskip

{\bf Case 3c(ii), Step 1:} {\em Proof of \eqref{quadri} for free waves, log loss.}
As before,   it suffices to estimate
\begin{equation}\label{4lin-rec=}
\left\lvert
\int
H^{-1}(\bar{\phi}_{R_1} \partial_1 \phi_{R_2})
\bar{\phi}_{R_3} \partial_2 \phi_{R_4} \ dx dt
-
\int
H^{-1}(\bar{\phi}_{R_1} \partial_2 \phi_{R_2})
\bar{\phi}_{R_3} \partial_1 \phi_{R_4} \ dx dt  \right\rvert
\end{equation}
where $R_1, R_2$ are $\alpha$-separated at frequency $\lambda$
and $R_3, R_4$ are $\alpha$-separated at frequency $\mu$.
We now split into two subcases according to the strength of the null form.

\medskip

\textbf{Subcase I:}  If there is $\beta \gtrsim \alpha \lambda$ such that
$| \xi_2 \wedge \xi_4 | \sim \beta$ for all $\xi_2 \in R_2$ and $\xi_4 \in R_4$
then we use the same argument as in  Case 3c(i).

\textbf{Subcase II:} Suppose now that $| \xi_2 \wedge \xi_4 | \lesssim \alpha \lambda$
for all $\xi_2 \in R_2$ and $\xi_4 \in R_4$ and also that $\mu \approx \lambda$.
Now we write the expression to control in the form
\[
\alpha \lambda
\left\lvert \int
P_\alpha H^{-\frac12} (\bar{\phi}_{R_1}  \phi_{R_2})
P_\alpha \bar H^{-\frac12} (\bar{\phi}_{R_3}  \phi_{R_4})(x, t) \
dt dx
\right\rvert
\]
This is estimated by applying  (\ref{bilinearL2}) to each of the two bilinear factors; there is
a $\log \lambda$ loss from the summation with respect to the $H^{-1}$ modulation.

\medskip

{\bf Case 3c(ii), Step 2:} {\em Proof of \eqref{quadri} for $U^2$ waves, log loss.}
Here we will show that
\begin{equation}\label{quadriU=}
|I_{3c}| \lesssim   \log \lambda
\|P_\e \phi^{(1)}_{\lambda}\|_{\due} \|P_\e \phi^{(2)}_{\lambda}\|_{\due} \|P_{\tilde \e} \phi^{(3)}_{\mu}\|_{ |D|^{-\frac{1}{2}}U^{2}_{\tilde \e}}
  \|P_{\tilde \e} \phi^{(4)}_{\mu}\|_{|D|^{-\frac{1}{2}}U^{2}_{\tilde \e}}, \qquad \mu \approx \lambda
\end{equation}
As in the similar argument in Case 3c(i) the problem reduces to the
case where each factor is localized in cubes of size $\alpha \time
\alpha \times \alpha \lambda$ located near the parabola. From here on,
the proof proceeds exactly as in the free case, using the
observation that the bilinear $L^2$ bounds
\eqref{BilinearEstimate} and  \eqref{bilinearL2}  extend to $U^2_\e$ functions.
This is somewhat less obvious for  \eqref{bilinearL2}; what helps is that
the $\alpha \times \alpha$ frequency localization allows for separation of
variables in $P_\alpha H^{-1}$, reducing the problem to a purely
temporal multiplier. But purely temporal multipliers interact well with the lateral $U^2$
atomic structure.

\medskip

{\bf Case 3c(ii), Step 3:} {\em Proof of \eqref{quadri=} for
$V^2$ waves, log loss.} Precisely, we will show that
\begin{equation}\label{quadriUV=}
|I_{3c}| \lesssim
(\log \mu)^5
\|\phi^{(1)}_{\lambda}\|_{l^2 V^{2,\sharp}} \|\phi^{(2)}_{\lambda}\|_{l^2 V^{2,\sharp}} \|\phi^{(3)}_{\mu}\|_{ l^2 V^{2,\sharp}}
  \| \phi^{(4)}_{\mu}\|_{l^2 V^{2,\sharp}}, \qquad \mu \approx\lambda
\end{equation}
The role of the $l^2$ structure here is to allow localization to the unit time scale.
Once this is done, from the fact that the $U^2_\Delta$ and $V^2_\Delta$ norms are equivalent at fixed modulation (see (\ref{uvbesov})), we have
\[
\|P_\e \phi_{\lambda}\|_{\due} \lesssim \log \lambda \| \phi_{\lambda}\|_{V^2_\Delta}
\]
Therefore \eqref{quadriUV=} follows from  \eqref{quadriU=}.

%%%%%%%%%%%%%%%%%%%%%%%%%%%%%%%%%%%%%%%%%%%%%%%%%%%%%%%
%%%%%%%%%%                                                     
%%%%%%%%%%                      Quintic terms                     
%%%%%%%%%%                                                     
%%%%%%%%%%%%%%%%%%%%%%%%%%%%%%%%%%%%%%%%%%%%%%%%%%%%%%%
\section{Quintic terms}

In this section, we focus on controlling the quintic terms
$N_{5,1}$, $N_{5,2}$, and $N_{5,3}$ of the nonlinearity, defined respectively by
\eqref{N51}, \eqref{N52}, and \eqref{N53}.
By pairing with a wave $\bar{\phi}$ and using duality as in the case of the trilinear terms,
controlling these terms is equivalent to controlling the integral
\[
I^6 = \int w_1 w_2 w_3  \ dx dt
\]
with
\[
\begin{split}
w_1 =  \ H^{-1} (\bar \phi^{(1)}_{\lambda_1} \partial \phi^{(2)}_{\lambda_2}),
\qquad
w_2 =  \ H^{-1} (\bar \phi^{(3)}_{\lambda_3} \partial\phi^{(4)}_{\lambda_4}),
\qquad
w_3 = \ \phi^{(5)}_{\lambda_5} \bar \phi^{(6)}_{\lambda_6} % \label{sextilinear}
\end{split}
\]
and its variations obtained by moving the derivative from one factor to the other
in each pair and by replacing $H$ by $\bar H$.

We denote by $\lambda$ the largest of the $\lambda_j$'s (which must
appear at least twice) and by $\lambda_0$ the smallest. We also
assume the normalization
\[
\|\phi^{(j)}_{\lambda_j}\|_{l^2 V^{2,\sharp}} = 1
\]
Then we need to establish the estimate
\begin{equation}\label{todo-6}
|I^6| \lesssim \lambda_0^s, \qquad s > 0
\end{equation}

Adding frequency localizations to each of the bilinear expressions, we can
replace $w_1,w_2,w_3$ by
\begin{equation}
w_1 = P_{\mu_1} H^{-1} (\bar \phi^{(1)}_{\lambda_1} \partial \phi^{(2)}_{\lambda_2}) ,
\qquad w_2=
 P_{\mu_2} H^{-1} (\bar \phi^{(3)}_{\lambda_3} \partial
\phi^{(4)}_{\lambda_4}),
\qquad w_3 =  P_{\mu_3}(\phi^{(5)}_{\lambda_5} \bar \phi^{(6)}_{\lambda_6})
\end{equation}
The $\mu_j$ summation is straighforward since we must have either $\mu_j = \lambda$
or $\mu_j \leq \lambda_0$. We can harmlessly  order the frequencies
as follows:
\[
\lambda_1 \leq \lambda_2, \qquad \lambda_3 \leq \lambda_4,  \qquad
\lambda_5 \leq \lambda_6
\]
Each of the three bilinear expressions is called unbalanced if $\mu_j \approx \lambda_{2j}$
and balaced otherwise (i.e. if $\mu_j \ll \lambda_{2j-1} \approx \lambda_{2j}$).
We begin by dispensing with the easy cases:

% The possible inputs are summarized as follows, in roughly increasing
% order of difficulty.

% \begin{tabular}{ccc}
% \hline
% $\lambda_1, \lambda_2$ & $\lambda_3, \lambda_4$ & $\lambda_5 , \lambda_6$ \\
% \hline
% unbalanced & unbalanced & either \\
% unbalanced & balanced & either \\
% balanced & balanced & unbalanced \\
% balanced & balanced & balanced \\
% \hline
% \end{tabular}

\textbf{Case 1: Unbalanced-unbalanced-either}
Lemma \ref{L:AxAngStrichartz}, the Sobolev embedding (\ref{Sobem2}),
and the $L^4$ Strichartz estimate (\ref{est:strichartz})
allow us to place each of the first two bilinear factors in $L^4$ and the
third one in $L^2$.

\textbf{Case 2: Unbalanced-balanced-either} Here we put the term identified as balanced
in $L^{\infty, 3}_{\e}$, the one identified as unbalanced in
$L^{2,6}_{\e}$, and the remaining $\phi^2$ term in $L^2$. This we can
achieve thanks to Lemma \ref{L:AxAngStrichartz}, the Sobolev embedding
(\ref{Sobem3}), and the $(q,r) = (4,4)$ Strichartz estimate
(\ref{est:strichartz}).

\textbf{Case 3: Balanced-balanced-either}
This case is also easy if high modulations are present in one of the balanced couples,
say the first one. Indeed, if one of the modulations there is much larger than $\mu_1 \lambda_1$, then we can use estimate \eqref{highmod-bi} to conclude as in Case 2.
Thus we are left with the low modulation case, where after some relabeling
we need to consider $w_1,w_2,w_3$
of the form
\[
\begin{split}
 w_1 = & \ P_{\mu_1}  H^{-1} (\overline {Q_{\lesssim \mu_1 \lambda_1}\phi}_{\lambda_1} \partial  Q_{\lesssim \mu_1 \lambda_2} \phi_{\lambda_1}),
\\
w_2 = & \ P_{\mu_2} H^{-1} (\overline{Q_{\lesssim \mu_2 \lambda_2} \phi}_{\lambda_2} \partial
Q_{\lesssim \mu_2 \lambda_2} \phi_{\lambda_2})
\\
w_3 = & \ P_{\mu_3} (\phi_{\lambda_3} \bar \phi_{\lambda_4})
\end{split}
\]
By the Littlewood-Paley trichotomy, the three output frequencies
$\mu_1, \mu_2$ and $\mu_3$ are not independent.  We denote the larger
magnitude scale, which is shared by two frequencies, by $\mhi$, and
the smaller one by $\mlo$.

We begin with a simpler computation, which applies under the assumption that
$\lambda_1 \not \in \{ \lambda_3,\lambda_4\}$.  Under this condition we claim that
\begin{equation}\label{cross-bi}
\| w_1 w_3\|_{L^1} \lesssim \log^2 ( \min \{\lambda_1,\lambda_4\})
\frac{\lambda_1}{\mu_1 (\lambda_1+\lambda_3)^\frac12
(\lambda_1+ \lambda_4)^\frac12}
\end{equation}
To see this we begin by harmlessly inserting angular localizations
$P_\e$ in the two factors in $w_1$.  Making the stronger assumption
that $P_\e Q_{\lesssim \mu_1 \lambda_1}\phi_{\lambda_1}$ is in either
$U^2_\Delta$ or $U^2_\e$ we can localize both factors further to
nearby frequency cubes $R_1,R_2$ of size $\mu \times \mu \times \lambda \mu$ using the
square summability provided by Lemma~\ref{U2:sqsum}.  Using the kernels $K_{\mu_1}$
and $K_{\mu_3}^0$  of $P_{\mu_1}H^{-1}$, respectively $P_{\mu_3}$, we can write
\[
\begin{split}
w_1 w_3(t,x) =
\int & \ K_{\mu_1}(s,y)(\overline P_{R_1}{Q_{\lesssim \mu_1 \lambda_1}\phi}_{\lambda_1}
\partial  P_{R_2} Q_{\lesssim \mu_1 \lambda_2} \phi_{\lambda_1})(t-s,x-y) \\ & \
K_{\mu_3}^0(y_1)  (\phi_{\lambda_3} \bar \phi_{\lambda_4})(t,x-y_1) \ ds dy dy_1
\end{split}
\]
Then we match one $\phi_{\lambda_1}$ factor with $\phi_{\lambda_3}$ and one with
$\phi_{\lambda_4}$ and apply twice the estimate \eqref{eq:bilinearUV1}, also noting that
$\| K_{\mu_1}\|_{L^1} \lesssim \mu_1^{-2}$ and $\|K_{\mu_3}^0\|_{L^1} \lesssim 1$.
This  gives the bound \eqref{cross-bi} but using $U^2$ type norms for the
$\phi_{\lambda_1}$ factors. The transition to $V^2$ norms is made trivially
if $\lambda_1 \lesssim  \lambda_4$ (which allows $\log \lambda_1$ losses).
Else we use the $U^2_\e$ norms and transition to $V^2_\e$ norms via
Lemma~\ref{lem:interpol}.

Then by using \eqref{cross-bi} for
$w_1 w_3$ and the $L^\infty$ bound \eqref{Hinftybound} for $w_2$ we obtain
\begin{equation}\label{6-cross}
|I^6| \lesssim (\log \lambda_0)^2 \lambda_1 \lambda_2 \mu_1^{-2}
\frac{\mu_1}{(\lambda_1+\lambda_3)^\frac12
(\lambda_1+ \lambda_4)^\frac12} \frac{\mu_2}{\lambda_2} =
 (\log \lambda_0)^2  \frac{  \lambda_1}{(\lambda_1+\lambda_3)^\frac12
(\lambda_1+ \lambda_4)^\frac12} \frac{\mu_2}{\mu_1}
\end{equation}
This will be used to dispense with some of the easier cases in the sequel.

{\bf Case 3(a): Balanced-balanced-unbalanced.}
This case is fully handled via \eqref{6-cross}. To see that we note that in this case
we have $\mu_3 = \lambda_3$.

If $\mu_3 = \mu_{lo}$ then we must have  $\mu_1= \mu_2 \geq \lambda_3$ which implies
that $\lambda_1,\lambda_2 \gg \lambda_3$. Thus \eqref{6-cross} applies and suffices.

If $\mu_3 = \mu_{hi}$ then we can assume that $\mu_1 = \mu_3 \gg \mu_2$. Hence
$\lambda_1 \gg \lambda_3$, and \eqref{6-cross} again applies and suffices.

{\bf Case 3(b): Balanced-balanced-balanced.} Here we have $\lambda_4 = \lambda_3
\gg \mu_3$. We first use \eqref{6-cross} to reduce the number of cases.

If $\lambda_1 \neq \lambda_3$ and $\lambda_2 \neq \lambda_3$ then we
can assume that $\mu_2 \leq \mu_1$ and  \eqref{6-cross} is enough.

If $\lambda_1 \neq \lambda_3$ but  $\lambda_2 = \lambda_3$ then  \eqref{6-cross}
suffices only if $\mu_1 \gtrsim \mu_2$.

Thus we are left with two cases:
\begin{itemize}
\item $\lambda_1 \neq \lambda_2 = \lambda_3$ and
$\mu_1 \ll \mu_2 = \mu_3 \ll \lambda_3$.
\item $\lambda_1 = \lambda_2 = \lambda_3$ and $\mu_1 \leq \mu_2$.
\end{itemize}
At this point the factors in $w_1$ and $w_2$ are restricted to low modulations,
but not those in $w_3$.  However  it is easy to  reduce the problem in both of these cases
 to small modulations ( $\leq \mu_3 \lambda_3$). Indeed, suppose that
one of the $\phi_{\lambda_3}$ has high modulation. Then we
use  \eqref{eq:Hinftybound} to bound $w_1$ in $\mu_1 L^\infty$.
For $w_2$ we use \eqref{L2biUV} and Bernstein to bound it in $\mu_2^{-1}
\lambda_2^{\frac12} L^2 L^4$ while  $w_3$ is in $\lambda_3^{-\frac12} L^2 L^{-\frac43}$,
again by Bernstein. Thus from here on we assume that
\[
w_3 = P_{\mu_3} (\overline {Q_{\lesssim \mu_3
\lambda_3}\phi}_{\lambda_3}   Q_{\lesssim \mu_3 \lambda_3}
\phi_{\lambda_3})
\]

{\bf Case 3(b)(i):} $\lambda_1 \neq \lambda_2 = \lambda_3$ and
$\mu_1 \ll \mu_2 = \mu_3 \ll \lambda_3$. Even though  \eqref{6-cross} does not cover
this case in full, we can still get some use out of it if we restrict ourselves
to high modulations of  $w_1$, namely
\[
w_1^{hi} =  \ P_{\mu_1}  H^{-1} \qo_{\gtrsim \mu_2^2}(\overline {Q_{\lesssim \mu_1 \lambda_1}\phi}_{\lambda_1} \partial  Q_{\lesssim \mu_1 \lambda_2} \phi_{\lambda_1})
\]
Then the $L^1$ norm of the kernel for $P_{\mu_1}  H^{-1} \qo_{\gtrsim \mu_2^2}$ is $\mu_2^{-2}$,
which suffices.

It remains to consider the low modulations of $w_1$,
\[
w_1^{lo} = \ P_{\mu_1}  H^{-1} \qo_{\ll  \mu_2^2}(\overline {Q_{\lesssim \mu_1 \lambda_1}\phi}_{\lambda_1} \partial  Q_{\lesssim \mu_1 \lambda_2} \phi_{\lambda_1})
\]
But in this case the modulations of $w_2$ and $w_3$ are
comparable, say equal to $\nu > \mu_2^2$ (or both $\leq \nu =
\mu_2^2$). Then we apply the $L^\infty$ bound
\eqref{eq:Hinftybound} to  $w_1^{lo}$ and the $L^2$ bound
\eqref{L2biUV} to $\qo_\nu w_2$ and $\qo_\nu w_3$ to obtain
\[
|I^6| \lesssim \frac{\lambda_1 \lambda_2}{\nu}
\frac{\mu_1}{\lambda_1}
 \frac{\nu}{\mu_2 \lambda_2} \lesssim 1
\]

{\bf Case 3(b)(ii): $\lambda_1 = \lambda_2 = \lambda_3=\lambda.$}

In this case we introduce full modulation truncations for each of the three
bilinear factors and consider
\[
I^6 = \int \qo_{\nu_1} w_1 \qo_{\nu_2} w_2 \qo_{\nu_3} w_3 \ dx dt
\]
where
\begin{equation}
\mu_1^2 \leq \nu_1 \leq \mu_1 \lambda_1, \qquad \mu_2^2 \leq \nu_2 \leq \mu_2 \lambda_2,
\qquad  \mu_3^2 \leq \nu_3 \leq \mu_3 \lambda_3
\label{ModCon}
\end{equation}
The reason for the lower bounds is that the symbol of $H^{-1}$ no longer changes
at lower modulations. Implicitly we allow a slight abuse of notation, where
for $\nu_i= \mu_i^2$ we replace $\qo_{\nu_i}$ by $\qo_{\lesssim \mu_i^2}$.
The two largest modulations are comparable;
we use $\nhi$ to denote their scale, along with $\nlo$ to denote
the scale of the remaining modulation, $\nlo \lesssim \nhi$.

Finally, we remark that the modulation summation only yields
acceptable logarithmic losses; in particular the $U^2$ and $V^2$ norms are
logarithmically close.

{\bf Subcase I.} $\mu_{lo}$ is paired with $\nu_{lo}$.
Then we apply the bilinear $L^\infty$ estimate \eqref{eq:Hinftybound} for the corresponding factor
and the bilinear $L^2$ bound \eqref{L2biUV} for the remaining factors to obtain
\[
|I^6| \lesssim (\log \lambda_0)^{4}
 \frac{\lambda^2}{\nu_1 \nu_2} \frac{\nu_{lo} \mu_{lo}}{\lambda}
\frac{ \nu_{hi}}{\mu_{hi} \lambda} \lesssim  (\log \lambda_0)^{4} \frac{\mu_{lo}}{ \mu_{hi}}
\]

{\bf Subcase II.} $\nu_3 = \nu_{lo}$, $\mu_1 = \mu_{lo}$. We apply
the bilinear $L^\infty$ estimate \eqref{eq:Hinftybound} for $P_{\nu_1} w_1$ and the $L^2$
 bound \eqref{L2biUV} for the remaining factors and conclude as above.

{\bf Subcase III.} $\nu_1 = \nu_{lo}$, $\mu_1 = \mu_{hi}$. Suppose that $\mu_2 = \mu_{lo}$,
as the argument is similar in the other case. This is the most difficult case.
We begin with several reductions.

We first localize each pair of factors to small angles using
multipliers $P_{\e_1}$, $P_{\e_2}$ and $P_{\e_3}$. Since $\log
\lambda$ losses are allowed in this case, it suffices to work with
factors in the spaces $X^{0,\frac12,1}$. Further, by orthogonality we
can reduce the problem to the case when both $\phi_{\lambda_j}$ factors are
frequency localized to cubes of size $\mu_j \times \mu_j \times
\nu_j$.  Then $w_j$ has a similar localization, and the $L^2$ bound
given by \eqref{L2biUV}.

If both $\phi_{\lambda_j}$ were free waves, then in effect $w_j$
would be localized in  smaller regions, namely a tilted cube $R_j$
of size $\mu_j \times \frac{\mu_j^2}{\lambda} \times \nu_j$.  Then
we can estimate
\[
\begin{split}
\| w_1 w_2\|_{L^2} \lesssim & \
\|w_1\|_{L^2} \|w_2\|_{L^2} \sup_{(\tau,\xi)} |R_1 \cap (\tau,\xi) - R_2|
\\
\lesssim & \  \frac{\lambda^2}{\nu_{lo}\nu_{hi}}
\left(\frac{\nu_{lo}}{\mu_{hi} \lambda}\right)^\frac12
\left(\frac{\nu_{hi}}{\mu_{lo} \lambda}\right)^\frac12 \left(\nu_{lo} \mu_{lo} \frac{\mu_{lo}^2}{\lambda}\right)^\frac12
\\
\lesssim & \ \left(\frac{\lambda \mu_{hi}}{\nu_{hi}}\right)^\frac12
\left(\frac{\mu_{lo}}{\mu_{hi}}\right) 
\end{split}
\]
Combined with the $L^2$ bound for $w_3$, this leads to the desired conclusion.

In order to gain a similar localization in the case when the factors are $X^{0,\frac12,1}$
functions, we foliate them with respect to the modulation, with integrability
with respect to the modulation parameter. For pointwise fixed modulation parameters
in all six factors the above argument still applies, and the conclusion follows.

%%%%%%%%%%%%%%%%%%%%%%%%%%%%%%%%%%%%%%%%%%%%%%%%%%%%%%%
%%%%%%%%%%                                                  
%%%%%%%%%%                      Conclusion                    
%%%%%%%%%%                                                      
%%%%%%%%%%%%%%%%%%%%%%%%%%%%%%%%%%%%%%%%%%%%%%%%%%%%%%%
\section{Conclusion}

\label{s:last}

It remains to show that the error terms may be controlled.
Recall from \S \ref{reductionEq} that we have
\[
\begin{split}
E_1 &= H^{-1}( H^{-1} A_x(0) |\phi|^2) \partial \phi  \\
E_2 &= H^{-1}\partial ( H^{-1} A_x(0) |\phi|^2)  \phi  \\
E_3 &= H^{-1} A_x(0) H^{-1}( \bar \phi \partial \phi) \phi  \\
E_4 &=(H^{-1} A_x(0))^2  \phi +   H^{-1} A_x(0) B   \phi + B^2 \phi \\
E_5 &= H^{-1}(B |\phi|^2) \partial \phi  \\
E_6 &= H^{-1}\partial ( B |\phi|^2)  \phi  \\
E_7 &= H^{-1}( \bar \phi \partial \phi) B \phi
\end{split}
\]
Some of these terms are related via duality.
In particular, estimating $E_1$ paired with $\phi$ is equivalent to estimating
\[
H^{-1}A_x(0) |\phi|^2 H^{-1}(\phi \partial \phi)
\]
In fact control on this term also gives control on $E_2$ paired with $\phi$.
Similarly, estimating $E_5$ paired with $\phi$ is equivalent to estimating
\[
B|\phi|^2H^{-1}(\phi \partial \phi)
\]
Control on this term also gives control on $E_6$ paired with $\phi$.

To obtain estimates, we use
\eqref{e:aFlow} for $H^{-1}A(0)$, which
gives $H^{-1}A(0) \in L^4_{t,x}[0, 1] \cap L^{2,6}_\e[0, 1]$;
\eqref{e:B} for $B = H^{-1}(A|\phi|^2)$, which
provides $B \in H^{-\frac12}L^2_{t,x}[0, 1] \in L^4_{t,x}[0, 1] \cap L^{2,6}_\e[0, 1]$;
Strichartz for $\phi$, providing $\phi \in L^4_{t,x}$;
and finally the $L^{\infty,3}_\e$ or $H^{-\frac12}L^2_{t,x}$ bounds on
$H^{-1}(\phi \partial \phi)$ coming from Lemma \ref{L:AxAngStrichartz}.

In fact, the estimates on $H^{-1}A(0)$, $B$, $\phi$, and $H^{-1}(\phi \partial \phi)$
come with extra regularity. This extra regularity guarantees all of the frequency summations.

%%%%%%%%%%%%%%%%%%%%%%%%%%%%%%%%%%%%%%%%%%%%%%%%%%%%%%%
%%%%%%%%%%                                                  
%%%%%%%%%%                      References                      
%%%%%%%%%%                                                     
%%%%%%%%%%%%%%%%%%%%%%%%%%%%%%%%%%%%%%%%%%%%%%%%%%%%%%%
\bibliography{CSS-bib}
\bibliographystyle{amsplain}

\end{document}